\documentclass[11pt,twoside]{article}
\usepackage{amsmath}
\usepackage{latexsym} 
\usepackage{enumitem}
\usepackage{color}

\input{amssym.def} 
\input{amssym} 
\textheight22cm 
\textwidth14.5cm 
\oddsidemargin0.7cm 
\evensidemargin0.7cm 
\topmargin-1.2cm 
\parindent0em 
\parskip1.0ex
\setlength{\marginparwidth}{25mm} 
\newfont{\fra}{eufm10 scaled 1095} 
\newfont{\Bb}{msbm10 scaled 1095}
\newfont{\Bbu}{msbm10 scaled 1395}
\newfont{\Bbg}{msbm10 scaled 1680} 
\newcommand\CC{{\mbox{\Bb C}}} 
\newcommand\RR{{\mbox{\Bb R}}}
\newcommand\RRu{{\mbox{\Bbu R}}} 
\newcommand\NN{{\mbox{\Bb N}}} 
\newcommand\ZZ{{\mbox{\Bb Z}}}

\newcommand\TT{{\mbox{\Bb T}}}

\newcommand\fg{{\frak{g}}}

\newcommand\fn{{\frak n}}

\newcommand\cH{{\cal H}}

\newcommand\cS{{\cal S}}
\newcommand\cR{{\cal R}}
\newcommand\cV{{\cal V}}
\newcommand\ph{\varphi}
 
\newcommand\eps{\varepsilon} 

\newcommand{\fsl}{\mathop{{\frak s \frak l}}}

\newcommand{\GL}{\mathop{{\rm GL}}} 
\newcommand{\SL}{\mathop{{\rm SL}}} 
\newcommand{\SO}{\mathop{{\rm SO}}}

\newcommand{\Spin}{\mathop{{\rm Spin}}}

\newcommand{\ad}{{{\rm ad}}}

\newcommand{\del}{{\rm \partial}}
\newcommand{\sgn}{\mathop{{\rm sgn}}}

\newcommand{\Cl}{{{\cal C}l}}

\renewcommand{\Re}{\mathop{{\rm Re}}} 
\renewcommand{\Im}{\mathop{{\rm Im}}}
\newcommand{\diag}{\mathop{{\rm diag}}} 
 
\newcommand{\codim}{{\rm codim}}
\newcommand{\Span}{{{\rm span}}} 
 
\newcommand{\proj}{{{\rm pr}}} 
\newcommand{\GG}{\Gamma\setminus G}
\newcommand{\ind}{{{\rm ind}}}
\newcommand{\supp}{{{\rm supp}}}
\newcommand{\Div}{{\rm div}}
\newcommand{\dom}{{\rm dom}}
\newcommand\ip{{\langle\cdot \,,\cdot \rangle}} 
\newcommand\rip{{(\cdot \,,\cdot )}}

\newcommand\proof{{\sl Proof. }} 
\newcommand{\qed}{\hspace*{\fill}\hbox{$\Box$}\vspace{2ex}} 
\newcommand{\benur}{\begin{enumerate}[label=(\roman*)]}
\newcommand{\la}{\langle}
\newcommand{\ra}{\rangle}
\newtheorem{theo}{Theorem}[section]
\newtheorem{pr}[theo]{Proposition}

\newtheorem{ex}[theo]{Example}
\newtheorem{re}[theo]{Remark}

\newtheorem{lm}[theo]{Lemma}
\setcounter{tocdepth}{2}

\begin{document} 

\title{Spectra of sub-Dirac operators on certain nilmanifolds}
\author{Ines Kath and Oliver Ungermann}
\maketitle

\begin{abstract}
We study sub-Dirac operators that are associated with left-invariant bracket-generating
sub-Riemannian structures on compact quotients of nilpotent semi-direct products
$G=\RR^n\rtimes_A\RR$. We will prove that these operators admit an $L^2$-basis of
eigenfunctions. Explicit examples show that the spectrum of these operators can be
non-discrete and that eigenvalues may have infinite multiplicity.
\end{abstract}

\vspace{0.75cm}

\centerline{\textbf{MSC:} 53C17 (35P10, 43A85)}

\section{Introduction} 
Spectra of sub-Laplace operators on sub-Riemannian manifolds are intensely studied.
Especially interesting is the case where the distribution defining the sub-Riemannian
structure is bracket generating, what we shall assume in the following. In this case
the sub-Laplacian is known to be hypo-elliptic \cite{H}. 

Many explicit calculations of the spectrum have been done in the situation where the
underlying manifold is a compact Lie group or a quotient of a Lie group by a discrete
 cocompact subgroup, see, for example, \cite{BF1,BFI,BF2,P}. In \cite{BF1,BFI,P} the
authors study spectral properties of sub-Laplace operators on nilpotent groups of step
two and on compact quotients by discrete subgroups. They determine the heat kernels
of these operators. This allows an explicit determination of the spectrum of the
sub-Laplacian, which is discrete in this situation.

The aim of this paper is to study spectra of sub-Riemannian analogs of the classical
Dirac operator. In the definition of the sub-Dirac operator the following difficulty
occurs: In contrast to the Riemannian case where we have the Levi-Civita connection
as a preferred connection, in general, there is no connection canonically associated
with a sub-Riemannian structure. Only in special geometric situations a canonical
connection exists. Hence the following definition of the sub-Dirac operator depends
on the choice of a connection. 

Let $(M,\cH,g)$ be a sub-Riemannian manifold, $\dim {\cal H}=d$. Suppose that $\nabla$
is a metric connection on $\cH$. Moreover, assume that $\cH$ is oriented and that
the bundle of orthonormal frames of $\cH$ admits a reduction to $\Spin(d)$. Such
a reduction will be called a spin structure of $\cH$. Then we can associate a spinor
bundle $S$ with this spin structure. Moreover, using the connection $\nabla$ we
can define a sub-Riemannian Dirac operator, which acts on sections in $S$.

Only few results for sub-Riemannian analogs of the Dirac operator are known. For the
case of a sub-Riemannian manifold of contact type such an operator was introduced
and studied by Petit \cite{Pe}, who called this operator Kohn-Dirac operator. More
exactly, this was done for a $\Spin^c$-structure.

Studying the sub-Riemannian Dirac operator the following natural questions arise:
Which structure does its spectrum have? How does the spectrum depend on the
sub-Riemannian geometry of the manifold and on the spin structure of $\cH$? How do
the sub-Dirac operator and its spectrum depend on the chosen connection?

In general, i.e., for arbitrary metric connections in $\cH$, the sub-Dirac is not symmetric.
We will characterize the symmetry of this operator by a simple condition on the connection.

Here we focus on nilmanifolds. More precisely, we study sub-Dirac operators on manifolds 
of the form  $M=\GG$ where $G=\RR^n\rtimes_A\RR$ is a semi-direct product defined by 
a one-parameter subgroup $A(t)$ of unipotent matrices in $\GL(n,\RR)$ and
$\Gamma$ is the subgroup $\ZZ^n\rtimes_A\ZZ$. These manifolds $M$ can be interpreted
as a suspension of the diffeomorphism of the torus $\RR^n/\ZZ^n$ induced
by $A(1)$. This is also the starting point of \cite{J} where the spectrum of the
Laplacian on left-invariant differential forms on $M$ is considered. Our sub-Dirac
operators will be associated with sub-Riemannian structures $(\dot{\cH},\dot{g})$ 
on $\GG$ coming from a left-invariant and bracket-generating distribution $(\cH,g)$
on $G$. We choose a metric connection in $H$ such that $D$ is symmetric. 

Our approach is to give an explicit decomposition of the regular representation of $G$.
Roughly speaking, it turns out that the sub-Dirac operator is an orthogonal sum of
elliptic operators on the real line, each having a discrete spectrum. This shows that
$D$ on $\GG$ has pure point spectrum.

We apply our results to compute the spectrum of $D$ explicitly for two classes of
two-step nilmanifolds of the above form. First we consider three-dimensional Heisenberg
manifolds. Secondly, we study a class of five-dimensional two-step nilpotent nilmanifolds
with a three-dimensional distribution. The latter example shows that the spectrum of the
sub-Dirac operator is not necessarily a discrete subset of $\RR$ and that its eigenvalues
may have infinite multiplicity, contrary to the results for the spectrum of the
sub-Laplacian on compact $2$-step nilmanifolds.

Finally, we discuss a three-step nilpotent example of dimension four with a
two-dimen\-sional distribution. In this case the spectrum can be expressed in terms
of the spectra of the family of operators $P_c=\del_t^2+(t^2+c)^2\pm 2t$, $c\in\RR$.

In all three examples, the multiplicities of the eigenvalues of $D$ can be read off
from the coadjoint orbit picture.
\\[2ex]
{\bf Acknowledgements} We would  like to thank Paul-Andi Nagy for several useful discussions.

\section{Sub-Riemannian Dirac operators}

\subsection{Definition of sub-Dirac operators}
Let $M$ be a smooth manifold and let $\cH\subset TM$ be a smooth distribution, where
$\dim\cH_x=d$ for all $x\in M$. Let $\Gamma(\cH)$ denote the space of smooth sections
of $\cH$. We assume that $\cH$ is bracket-generating. That means, that for each $x\in M$
there is a $J\in\NN$ such that the sequence
$$\Gamma_0:=\Gamma(\cH),\quad \Gamma_{j+1}:=\Gamma_j+[\Gamma_0,\Gamma_j]$$
satisfies $\{X(x)\mid X\in \Gamma_{J}\}=T_xM$. If $g$ is a Riemannian metric on $\cH$,
then the pair $(\cH,g)$ is called a sub-Riemannian structure on $M$ and $(M,\cH,g)$
is called a sub-Riemannian manifold.

Let $\nabla:\Gamma(\cH)\otimes\Gamma(\cH)\rightarrow\Gamma(\cH)$ be a metric connection on
$\cH$. Note that here we consider only derivations by vector fields in $\cH$. Suppose that $\cH$
is oriented and that it admits a spin structure, i.e., that there is a $\Spin(d)$-reduction
$P_{\Spin}(\cH)$ of the principal $\SO(d)$-bundle $P_{\SO}(\cH)$ of oriented orthonormal frames of
$(\cH,g)$. We consider  the complex representation of $\Spin(d)$ which is obtained by restriction of
(one of) the complex irreducible representation(s) of the Clifford algebra ${\cal C}l(d):=\Cl(\RR^d)$.
We will call it spinor representation and denote it by $\Delta_d$. The associated bundle
$P_{\Spin}(\cH)\times_{\Spin(d)}\Delta_d$ is called spinor bundle $S$ of $(\cH,g)$. The
space of smooth sections in $S$ is denoted by $\Gamma(S)$. The connection $\nabla$ defines
a connection  $\nabla^S:\Gamma(\cH)\times \Gamma(S)\rightarrow \Gamma(S)$ in the following
way. Let $s_1,\dots, s_d$ be a local orthonormal frame of $\cH$ and consider the local
connection forms $\omega_{ij}=g(\nabla s_i,s_j)$. Then we define
$$\nabla^S_X\ph:=X(\ph)+{\textstyle\frac12}\sum_{i<j} \omega_{ji}(X)\;s_i\cdot s_j\cdot\ph,$$
where `$\,\cdot\,$' denotes the Clifford multiplication.

Now we can define a sub-Riemannian Dirac operator, or sub-Dirac operator for short, by
\begin{equation}\label{ED} 
D=\sum_i s_i\cdot \nabla^S_{s_i}:\Gamma(S)\longrightarrow \Gamma(S),
\end{equation}
where again $s_1,\dots, s_d$ is a local orthonormal frame of $\cH$. Note, that the definition
of $D$ depends on the choice of the connection $\nabla$ on $\cH$ and that, in general, this
choice is far from being canonical in contrast to the Riemannian case, where we have the
Levi-Civita connection as a preferred connection.

A large class of metric connections in $\cH$ can be obtained in the following way. Suppose we
are given a further distribution ${\cal V}\subset TM$ such that $TM=\cH \oplus{\cal V}$. Then
this decomposition of $TM$ gives us a projection $\proj:TM\rightarrow \cH$ and we can define
a connection $\nabla$ by the Koszul formula
\begin{eqnarray}\label{EK}
2g(\nabla_X Y,Z)&=& X(g(Y,Z))+Y(g(X,Z))-Z(g(X,Y))\nonumber \\
&&+g(\proj[X,Y],Z)-g(\proj[X,Z],Y)-g(\proj [Y,Z],X),
\end{eqnarray}
where $X,Y,Z\in\Gamma(\cH)$. In this case $\nabla$ is uniquely determined by the vanishing
of $\nabla_XY-\nabla_YX-\proj[X,Y]$.

\subsection{Symmetry of the sub-Dirac operator}\label{S2.2}

A sub-Riemannian manifold is said to be regular if for each $j=1,\dots,J$ the dimension
of $\{X(x)\mid X\in \Gamma_{j}\}$ does not depend on the point $x\in M$. Let $(M,\cH,g)$
be an orientable regular sub-Riemannian manifold. Then $(M,\cH,g)$ admits an intrinsic
volume form $\omega_0$ on $M$, see \cite{M}, Section 10.5 and \cite{ABGR}. 

Let $(M,\cH,g)$ be an oriented and regular sub-Riemannian manifold. Consider any volume form
$\omega$ on $M$, e.g., the intrinsic one. Define the divergence of a vector field $X$ on $M$
by ${\cal L}_X\omega=(\Div X)\cdot \omega$. Let $\nabla$ be a metric connection on $\cH$.
Suppose that $\cH$ admits a spin structure and define $D:\Gamma(S)\rightarrow \Gamma(S)$
as above. Let $\ip$ be a hermitian inner product  on $\Delta_d$ for which the Clifford
multiplication is antisymmetric. This inner product is unique up to scale. It induces a
hermitian inner product on $S$, which together with $\omega$ gives an $L^2$-inner product
$\rip$ on the space $\Gamma_0(S)$ of sections in $S$ with compact support. 

It is easy to find examples of three-dimensional Heisenberg manifolds with two-dimensional
distribution $\cH$ and metric connection for which $D$ is not symmetric, see Section~\ref{S4.2}.

The following lemma states that the sub-Dirac operator is symmetric if and only if the
divergence defined by the sub-Riemannian structure coincides with the divergence given
by the connection, compare also \cite{FS} for the Riemannian case.

\begin{lm}\label{lm:sym_of_D}
Under the above conditions,  $D$ is symmetric if and only if 
\begin{equation}\label{Ediv}\Div X = \sum_{i=1}^d \la \nabla_{s_i}X,s_i\ra 
\end{equation}
holds for one (and therefore for every) local orthonormal basis $s_1,\dots, s_d$ of $\cH$.

If, in addition, ${\cal V}$ is a complement of $\cH$ in $TM$ and $\nabla$ is defined
as in {\rm (\ref{EK})}, then {\rm (\ref{Ediv})} is equivalent to the following condition. 
For one (and therefore for all) sets
$\{\xi_1,\ldots,\xi_{l}\}$, $l=\dim M-k$, of local sections of ${\cal V}$ that satisfy
$\omega(s_1,\ldots,s_d,\xi_1,\ldots,\xi_{l})=1$ the equation 
\[\eta_1([X,\xi_1])+\dots+\eta_l([X,\xi_l])=0\]
holds for all $X\in\Gamma(\cH)$, where 
$\eta_1,\ldots,\eta_{l}\in\Gamma(T^\ast M)$ are defined to be zero
on $\cH$ and dual to $\xi_1,\ldots,\xi_{l}$.

In particular, if $\codim\,\cH=1$, then $D$ is symmetric if and only if
$[\Gamma(\cH),\xi_1]\subset\Gamma(\cH)$.
\end{lm}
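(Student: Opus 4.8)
The final assertion is simply the codimension-one case of the equivalence in the second part of the lemma, so the plan is to prove that equivalence in general and then specialise. I work in the setting of that part: $\cV$ complements $\cH$, $\nabla$ is the Koszul connection (\ref{EK}), I fix a local orthonormal frame $s_1,\dots,s_d$ of $\cH$ and sections $\xi_1,\dots,\xi_l$ of $\cV$ normalised by $\omega(s_1,\dots,s_d,\xi_1,\dots,\xi_l)=1$, and let $\sigma^1,\dots,\sigma^d,\eta_1,\dots,\eta_l$ be the dual coframe, so that $\sigma^i|_\cH=g(\cdot,s_i)$, $\sigma^i|_\cV=0$, and the $\eta_j$ are the forms in the statement. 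Everything reduces to the pointwise identity
\[\Div X-\sum_{i=1}^d\la\nabla_{s_i}X,s_i\ra=-\sum_{j=1}^l\eta_j([X,\xi_j]),\qquad X\in\Gamma(\cH),\]
because by the first part of the lemma the left-hand side vanishes precisely when $D$ is symmetric.

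I would establish this identity by computing the two terms on the left separately. Since $\omega=\sigma^1\wedge\cdots\wedge\sigma^d\wedge\eta_1\wedge\cdots\wedge\eta_l$ on the normalised frame, the defining relation $\cL_X\omega=(\Div X)\,\omega$ together with $(\cL_X\theta)(E)=-\theta([X,E])$ for a dual pair $(\theta,E)$ gives the frame formula
\[\Div X=-\sum_{i=1}^d\sigma^i([X,s_i])-\sum_{j=1}^l\eta_j([X,\xi_j]).\]
For the connection term I feed $Y=X$, $Z=s_i$ and derivative direction $s_i$ into (\ref{EK}): the term $Xg(s_i,s_i)$ vanishes since $g(s_i,s_i)=1$, the terms $s_ig(X,s_i)$ and $-s_ig(s_i,X)$ cancel by symmetry of $g$, the projection term with $[s_i,s_i]$ drops out, and the remaining two projection terms combine to give $g(\nabla_{s_i}X,s_i)=g(\proj[s_i,X],s_i)=\sigma^i([s_i,X])=-\sigma^i([X,s_i])$. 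Summing over $i$ yields $\sum_i\la\nabla_{s_i}X,s_i\ra=-\sum_i\sigma^i([X,s_i])$, and subtracting this from the divergence formula leaves exactly $-\sum_j\eta_j([X,\xi_j])$. Independence of the chosen normalised $\{\xi_j\}$ is then automatic, since the left-hand side of the identity does not refer to it.

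The final statement is now immediate. When $\codim\cH=1$ we have $l=1$ and a single pair $\xi_1,\eta_1$ with $\eta_1|_\cH=0$ and $\eta_1(\xi_1)=1$, so $\Ker\eta_1=\cH$. The symmetry criterion $\eta_1([X,\xi_1])=0$ for all $X\in\Gamma(\cH)$ then says exactly that $[X,\xi_1]\in\cH$ for every such $X$, that is, $[\Gamma(\cH),\xi_1]\subset\Gamma(\cH)$.

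I expect the real work to lie not in this specialisation but in the two ingredients above, and the main obstacle to be the sign- and term-bookkeeping: checking that the metric and $[s_i,s_i]$ terms of the Koszul formula really disappear, so that the connection contribution reduces to the same bracket sum $-\sum_i\sigma^i([X,s_i])$ that also appears in $\Div X$. I would also need the first part of the lemma, which I would obtain by integration by parts: using antisymmetry of Clifford multiplication and metricity of $\nabla^S$, the difference $(D\varphi,\psi)-(\varphi,D\psi)$ becomes $\int_M\la\varphi,Z\cdot\psi\ra\,\omega$ with $Z=\sum_i\big(\Div(s_i)\,s_i+\nabla_{s_i}s_i\big)$, the total-divergence part integrating to zero by Stokes; since Clifford multiplication is nondegenerate, this vanishes for all $\varphi,\psi$ iff $Z=0$, and (rewriting $\sum_i\nabla_{s_i}s_i$ via metricity of $\nabla$) $Z=0$ is exactly condition (\ref{Ediv}) on each frame vector, hence on all of $\Gamma(\cH)$.
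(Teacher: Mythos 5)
Your proposal is correct and follows essentially the same route as the paper: an integration-by-parts identity reduces the symmetry of $D$ to the pointwise vanishing of the defect $\Div X-\sum_i\la\nabla_{s_i}X,s_i\ra$, and the second part is obtained exactly as in the paper by evaluating $\cL_X\omega$ on the normalised frame and using the Koszul formula to identify $\sum_i\la\nabla_{s_i}X,s_i\ra$ with $-\sum_i\sigma^i([X,s_i])$. The only cosmetic difference is in the first part, where you encode the defect in the fixed vector field $Z$ and test with $\ph=Z\cdot\psi$, while the paper builds a spinor-dependent vector field $u$ from $\ph,\psi$ and then shows every compactly supported section of $\cH$ arises that way --- the same trick run in the opposite direction.
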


\proof Consider sections $\ph,\psi\in\Gamma_0(S)$ and define
$f:\cH\rightarrow \CC$ by 
\begin{equation}\label{E1a}f(w):= \langle \ph,w \cdot \psi\rangle.
\end{equation}
Moreover, define
$u\in \Gamma_0(\cH\otimes \CC)$ by 
\begin{equation}\label{E1} g^{\Bbb C}(u,w)=f(w)
\end{equation} 
for all $w\in\Gamma(\cH)$, where $g^{\Bbb C}$ denotes the the complex bilinear extension of $g$.
Choose a local orthonormal frame $s_1,\dots, s_d$ of $\cH$. Then 
$$\langle D\ph,\psi \ra-\la\ph,D\psi\ra =\sum_{i=1}^d\Big( f(\nabla_{s_i}s_i)-s_i(f(s_i))\Big)
=\sum_{i=1}^d g^{\Bbb C}(\nabla_{s_i}u,s_i),$$
thus
$$(D\ph,\psi)-(\ph,D\psi)=\int_M \left(\sum_{i=1}^d g^{\Bbb C}(\nabla_{s_i}u,s_i)\right)\omega
=\int_M \left(\sum_{i=1}^d g^{\Bbb C}(\nabla_{s_i}u,s_i)-\Div(u)\right)\omega.$$  
In particular, (\ref{Ediv}) is sufficient for the symmetry of $D$. On the other hand, any
section $u_1\in\Gamma_0(\cH)$ is the real part of a section $u\in \Gamma_0(\cH\otimes \CC)$
that satisfies~(\ref{E1a}) and (\ref{E1}) for some $\ph,\psi\in\Gamma_0(S)$. Indeed,
choose $\psi$ such that $\la \psi(x),\psi(x)\ra=1$ for all $x\in {\rm supp}\,u_1$ and
put $\ph:= u_1\cdot\psi$. Define $u$ by (\ref{E1a}) and (\ref{E1}). Then
$$g(u_1,w)=\Re \la u_1\cdot\psi,w\cdot\psi\ra=\Re \la \ph,w\cdot\psi\ra=\Re f(w)$$
for all $w\in\Gamma(\cH)$, hence $u_1=\Re u$. Consequently, the symmetry of $D$ implies
$$\int_M \left(\sum_{i=1}^d g(\nabla_{s_i}u,s_i)-\Div(u)\right)\omega=0$$ for all
$u\in\Gamma_0(\cH)$. Since the integrand is $C^\infty_0(M)$-linear in $u$,
Equation (\ref{Ediv}) follows.

The second part of the lemma now follows from
\begin{eqnarray*}
&&\;\Div(u)\ =\ ({\cal L}_u\omega)(s_1,\dots,s_d,\xi_1,\dots,\xi_l)\\
&=& -\sum_{i=1}^d\omega(s_1,\ldots,[u,s_i],\dots, s_d,\xi_1,\dots,\xi_l)
-\sum_{j=1}^l\omega(s_1,\dots,s_d,\xi_1,\ldots,[u,\xi_j],\dots,\xi_l)\\
&=& -\sum_{i=1}^d g(\proj [u,s_i],s_i)-\sum_{j=1}^l \eta_j([u,\xi_j])
\ =\ \sum_{i=1}^d g(\nabla_{s_i}u,s_i)-\sum_{j=1}^l \eta_j([u,\xi_j]),\\
\end{eqnarray*}
where the last equality is a consequence of Equation~(\ref{EK}).\qed

\subsection{Sub-Dirac operators on Lie groups and compact quotients}

Let $G$ be a simply connected Lie group and $\Gamma\subset G$ a uniform discrete subgroup.
Let $\cH\subset TG$ be a left-invariant distribution and $ g$ a left-invariant
Riemannian metric on $\cH$. Obviously, $\cH$ is spanned by orthonormal
left-invariant vector fields $s_1,\dots, s_d$. In particular, the frame bundle
$P_{\SO}(\cH)$ is a trivial bundle and the unique spin structure of $\cH$
equals $P_{\Spin}(\cH)=G\times \Spin(d)$. 

The pair $(\cH, g)$ induces a sub-Riemannian structure on $\Gamma\setminus G$,
which we will  denote by $(\dot\cH,\dot g)$. The frame bundle $P_{\SO}(\dot\cH)$
can be identified with
$$P_{\SO}(\dot\cH)=G\times_\Gamma \SO(d),$$ 
where $\Gamma$ acts by left multiplication on $G$ and trivially on $\SO(d)$. There is
a one-to-one correspondence between homomorphisms $\eps:\Gamma\rightarrow\ZZ_2=\{0,1\}$
and spin structures of $\dot\cH$ given by
$$\eps\longmapsto P_{\Spin,\eps}(\dot\cH)=G\times_\Gamma \Spin(d),$$
where $\gamma\in \Gamma$ acts by multiplication by $e^{i\pi\eps(\gamma)}$
on $\Spin(d)$. Spinor fields are sections of the associated spinor bundle
$P_{\Spin,\eps}(\dot\cH)\times_{\Spin(d)}\Delta_d\cong{G\times_\Gamma}\Delta_d$ or,
equivalently, maps $\psi:G\rightarrow \Delta_d$ that satisfy
$\psi(\gamma g)=e^{i\pi\eps(\gamma)}\psi(g)$ for all $\gamma\in\Gamma$, $g\in G$.

The intrinsic volume form $\omega_0$ on $\Gamma\setminus G$ introduced in \cite{ABGR}
and discussed in Section~\ref{S2.2} is left-invariant. 

Now let $\nabla$ be a left-invariant metric connection on $\dot\cH$. Let $s_1,\dots, s_d$
be an orthonormal basis of $\dot\cH$ consisting of left-invariant vector fields. As for
the symmetry of the Dirac operator discussed in Lemma~\ref{lm:sym_of_D}, note that
Equation~(\ref{Ediv}) is equivalent to
\begin{equation}
0=\sum_{i=1}^d\,g(\nabla_{s_i}s_j,s_i)
\end{equation}
for all $j=1,\dots,d$. Indeed, since the intrinsic volume form is left-invariant and $G$
must be unimodular the divergence of any left-invariant vector field vanishes.

\section{Sub-Riemannian structures on $\Gamma\setminus (\RRu ^n\rtimes_A\RRu)$}

\subsection{The standard model}\label{S3.1}
Let $A(t)=\exp(tB)$ be a one-parameter subgroup of $\GL(n,\RR)$.  We consider the
simply-connected solvable Lie group $\RR^n\rtimes_A\RR$ with group law
\[(x,s)\,(y,t)=(x+A(s)y\,,\,s+t)\;.\]
In particular, $(0,t)\,(x,0)\,(0,t)^{-1}=(A(t)x\,,\,0)$. In addition, we
assume $A(1)\in\SL(n,\ZZ)$ so that the set $\ZZ^n\times\ZZ$ becomes a uniform
discrete subgroup.

The pair $(\RR^n\rtimes_A\RR,\ZZ^n\rtimes_A\ZZ)$ serves as a standard model in the
following sense.
\begin{lm}\label{lm:model}
Let $G$ be an exponential Lie group admitting a connected abelian normal subgroup $N$
of codimension one. Let $\Gamma$ be a uniform discrete subgroup of $G$ such that
$\Gamma\cap N$ is uniform in $N$. Then there exists a one-parameter subgroup $A$
of~$\GL(n,\RR)$, $n=\dim N$, with $A(1)\in\SL(n,\ZZ)$, and an isomorphism~$\Phi$
of $\RR^n\rtimes_A\RR$ onto $G$ mapping $\ZZ^n\rtimes_A\ZZ$ onto $\Gamma$.
\end{lm}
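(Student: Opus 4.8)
The plan is to realise $G$ as a semidirect product by splitting the extension $1\to N\to G\to G/N\to1$, and to match the two lattices coordinate by coordinate, using the exponentiality of $G$ to normalise the generator transverse to $N$.

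First I would fix coordinates on $N$. Since $N$ is connected, abelian and normal, its Lie algebra $\fn$ is an abelian ideal of $\fg$ of codimension one, and because $G$ is exponential the restriction $\exp_N=\exp|_\fn\colon\fn\to N$ is a group isomorphism; in particular $N\cong\RR^n$ is simply connected. The hypothesis that $\Gamma_N:=\Gamma\cap N$ be uniform in $N$ means that $\Gamma_N$ is a lattice, so $\log\Gamma_N$ is a full lattice in the vector space $\fn$. Choosing a $\ZZ$-basis $v_1,\dots,v_n$ of $\log\Gamma_N$, which is simultaneously an $\RR$-basis of $\fn$, I obtain a linear isomorphism $\ell\colon\RR^n\to\fn$, $\ell(e_i)=v_i$, with $\ell(\ZZ^n)=\log\Gamma_N$ and hence $\exp_N(\ell(\ZZ^n))=\Gamma_N$.

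Next I would analyse the quotient. As $G/N$ is connected, one-dimensional and (being a quotient of an exponential group) isomorphic to $\RR$, the projection $\pi\colon G\to G/N\cong\RR$ sends $\Gamma$ to a subgroup $\bar\Gamma$. The crucial structural point, and the step I expect to be the main obstacle, is that $\bar\Gamma$ is a \emph{lattice} in $\RR$. Cocompactness of $\bar\Gamma$ is immediate from that of $\Gamma$, but its discreteness is not formal: it is exactly here that the assumption $\Gamma_N$ uniform is used, through the standard fact that if $\Gamma\cap N$ is a lattice in the closed normal subgroup $N$, then $\Gamma N$ is closed in $G$, so that $\bar\Gamma\cong\Gamma/\Gamma_N$ is discrete. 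A discrete cocompact subgroup of $\RR$ has the form $c\ZZ$; choosing $\gamma_0\in\Gamma$ with $\pi(\gamma_0)=c$ a generator, I get $\Gamma=\langle\Gamma_N,\gamma_0\rangle$.

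Finally I would build the isomorphism, using exponentiality to make the choice of transversal canonical. Set $X:=\log\gamma_0\in\fg$, so that $\exp(X)=\gamma_0$ and $\pi(\exp X)=c\neq0$, whence $X\notin\fn$. Let $A(t)\in\GL(n,\RR)$ be the matrix of $\Ad(\exp tX)|_\fn=e^{t\,\ad_X|_\fn}$ in the basis $v_1,\dots,v_n$; this is a one-parameter subgroup. Define $\Phi\colon\RR^n\rtimes_A\RR\to G$ by $\Phi(x,t):=\exp_N(\ell x)\,\exp(tX)$. Using $\exp(tX)\exp_N(v)\exp(-tX)=\exp_N(\Ad(\exp tX)v)$ one checks directly that $\Phi$ is a homomorphism, and it is bijective because $\pi\circ\Phi(x,t)=tc$ recovers $t$ while $\exp_N\circ\,\ell$ is a bijection onto $N$. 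It remains to identify the lattices. Since $\gamma_0\in\Gamma$ normalises $\Gamma_N$ (as $N$ is normal in $G$), conjugation by $\gamma_0$ preserves $\log\Gamma_N$, i.e. $A(1)\ZZ^n=\ZZ^n$, so $A(1)\in\GL(n,\ZZ)$; and $\det A(1)=e^{\tr(\ad_X|_\fn)}>0$ together with $\det A(1)=\pm1$ forces $\det A(1)=1$, giving $A(1)\in\SL(n,\ZZ)$. Then $\Phi(\ZZ^n\times\{0\})=\Gamma_N$ and $\Phi(0,1)=\gamma_0$, so $\Phi$ maps $\ZZ^n\rtimes_A\ZZ=\langle\ZZ^n,(0,1)\rangle$ onto $\langle\Gamma_N,\gamma_0\rangle=\Gamma$, as required.
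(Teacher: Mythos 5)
Your proposal is correct and follows essentially the same route as the paper: coordinatize $N$ by a $\ZZ$-basis of the lattice $\Gamma\cap N$, use closedness of $\Gamma N$ to find a generator $\exp(b)\in\Gamma$ of the discrete cocompact image in $G/N\cong\RR$, let $A(t)$ be conjugation by $\exp(tb)$ on $N$, and set $\Phi(x,t)=M(x)\exp(tb)$. The only (harmless) differences are cosmetic: you phrase $A(t)$ as $\Ad(\exp tX)|_{\fn}$ rather than as conjugation on the vector group, and you spell out the determinant argument ($\det A(1)=e^{\tr(\ad_X|_\fn)}>0$ combined with $\det A(1)=\pm1$) that the paper leaves implicit when it passes from $A(l)\ZZ^n=\ZZ^n$ to $A(l)\in\SL(n,\ZZ)$.
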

\begin{proof}
We fix generators $v_1,\ldots,v_n$ of the lattice $\Gamma\cap N$ of the vector group $N$
and consider the linear isomorphism $M$ of $\RR^n$ onto $N$ given by $M(e_j)=v_j$. On
the other hand, the assumption on $\Gamma$ implies that $\Gamma N$ is closed in $G$
and that $\Gamma N/N$ is a discrete subgroup of $G/N$. Hence there exists $b\in\fg$
with $\exp(b)\in\Gamma$ and such that $\exp(b)N$ is a generator of $\Gamma N/N$.

Put $A(t)x=M^{-1}(\exp(tb)M(x)\exp(tb)^{-1})$. Now it follows that $\Phi(x,t)=M(x)\exp(tb)$
is an isomorphism of $\RR^n\rtimes_A\RR$ onto $G$ with $\Phi(\ZZ^n\times\ZZ)=\Gamma$.
In particular, $\ZZ^n\rtimes_A\ZZ$ is a subgroup of $\RR^n\rtimes_A\RR$. This means
that $\ZZ^n$ is $A(l)$-invariant for all $l\in\ZZ$ so that $A(l)\in\SL(n,\ZZ)$. \qed
\end{proof}

The condition $A(1)\in\SL(n,\ZZ)$ implies $B\in\fsl(n,\RR)$ and $A(t)\in\SL(n,\RR)$ for
all $t\in\RR$. This reflects the fact that locally compact groups admitting a uniform
discrete subgroup are unimodular, compare Theorem~7.1.7 of~\cite{W}.

The Lie algebra of $G:=\RR^n\rtimes_A\RR$ is isomorphic to $\fg=\RR^n\rtimes_B\RR$, and
$B=\ad(b)\,|\,\fn$, where $b=(0,1)$ and $\fn=\RR^n\times\{0\}$. Note that $G$ is
exponential if and only if $B$ has no purely imaginary eigenvalues, compare Theorem~1
of~\cite{LL}.

It is evident that
\[\pi(x,t)=\left( \begin{array}{cccc} & & & x_1 \\ & A(t) & & \vdots \\
& &  & x_n \\ 0 & \ldots & 0 & 1 \end{array}\right)\]
defines a representation, which is faithful provided that $G$ is exponential and
not abelian.

\begin{ex}\label{ex:3_dim_Heis}
{\rm Fix $r\in\ZZ_+$ and set
$B=\left(\begin{array}{cc} 0 & r\\ 0 & 0\end{array}\right)$ so that
$A(t)=\exp(tB)=I+tB$. Since $A(l)\in\SL(2,\ZZ)$ for $l\in\ZZ$,
$\Gamma=\ZZ^2\rtimes_A\ZZ$ is a subgroup of $G=\RR^2\rtimes_A\RR$. On the other hand,
\[\pi(x_1,x_2,t)=\left(\begin{array}{ccc} 1 & rt & x_1 \\ 0 & 1 & x_2 \\
 0 & 0 & 1\end{array}\right)\]
gives an isomorphism from $G$ onto the three-dimensional Heisenberg group $H(1)$
in its standard realisation as a group of matrices mapping $\Gamma$ onto
\[\Gamma_r=\left\{\;\left(\begin{array}{ccc} 1 & rl & k_1 \\ 0 & 1 & k_2 \\ 0 & 0 & 1
\end{array}\right):l,k_1,k_2\in\ZZ\;\right\}\;.\]
In particular, the above construction yields all uniform discrete subgroups of $H(1)$
and hence all three-dimensional Heisenberg manifolds, compare Section~2 of~\cite{GW}.}
\end{ex}

Heisenberg manifolds and certain generalisations of them will be discussed in
Section~\ref{S4.2} and~\ref{S4.3} in greater detail.

For the subgroup $\Gamma:=\ZZ^n\rtimes_A\ZZ$, the spin structures of any distribution
of $T(\GG)$ induced by a left-invariant distribution of $T(G)$ are determined as follows.

\begin{lm}\label{lm:eps} A map $\eps:\Gamma\to\ZZ_2$ is a homomorphism if
and only if $\eps(k,l)=\eps'(k)+\dot\eps(l)$ for some homomorphism
$\dot{\eps}:\ZZ\to\ZZ_2$ and a homomorphism $\eps':\ZZ^n\to\ZZ_2$ satisfying
\begin{equation}\label{EsumA}
\sum_\mu \eps'(e_\mu)(A(1)-I)_{\mu\nu}\,\in 2\ZZ\end{equation}
for all $\nu$.
\end{lm}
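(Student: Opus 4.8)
The plan is to exploit the semidirect-product structure of $\Gamma=\ZZ^n\rtimes_A\ZZ$, whose multiplication reads $(k,l)(k',l')=(k+A(l)k',\,l+l')$, together with the fact that the target $\ZZ_2$ is abelian (so every homomorphism is automatically conjugation-invariant). First I would split $\Gamma$ via its two obvious subgroups: the normal copy $\ZZ^n\times\{0\}$ and the complementary copy $\{0\}\times\ZZ$. For the forward direction, given a homomorphism $\eps\colon\Gamma\to\ZZ_2$, I set $\eps'(k):=\eps(k,0)$ and $\dot\eps(l):=\eps(0,l)$; these are homomorphisms on $\ZZ^n$ and on $\ZZ$, being restrictions of $\eps$ to subgroups. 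Since $(k,0)(0,l)=(k,l)$, the homomorphism property gives at once $\eps(k,l)=\eps'(k)+\dot\eps(l)$.

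To extract the constraint (\ref{EsumA}) I would use conjugation invariance. A direct computation yields $(0,1)(k,0)(0,1)^{-1}=(A(1)k,0)$, whence $\eps'(A(1)k)=\eps(A(1)k,0)=\eps(k,0)=\eps'(k)$ for all $k\in\ZZ^n$. Specializing to $k=e_\nu$ and expanding $(A(1)-I)e_\nu=\sum_\mu(A(1)-I)_{\mu\nu}e_\mu$ with the additivity of $\eps'$ turns the relation $\eps'((A(1)-I)e_\nu)=0$ in $\ZZ_2$ into the integer congruence $\sum_\mu\eps'(e_\mu)(A(1)-I)_{\mu\nu}\equiv 0\pmod 2$, which is precisely (\ref{EsumA}).

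For the converse, given homomorphisms $\eps',\dot\eps$ satisfying (\ref{EsumA}), I would define $\eps(k,l):=\eps'(k)+\dot\eps(l)$ and verify $\eps((k,l)(k',l'))=\eps(k,l)+\eps(k',l')$ directly. Using that $\dot\eps$ is a homomorphism and that $\eps'$ is additive, the verification collapses to the single identity $\eps'(A(l)k')=\eps'(k')$ for all $l\in\ZZ$ and $k'\in\ZZ^n$. I would reduce the case of general $l$ to $l=1$ by induction, writing $A(l)=A(1)^l$ and using $A(1)\in\SL(n,\ZZ)$ (so that $A(1)^{-1}\in\SL(n,\ZZ)$ also preserves $\ZZ^n$) to handle negative $l$. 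The remaining case $l=1$, namely $\eps'(A(1)k')=\eps'(k')$, is equivalent to $\eps'((A(1)-I)e_\nu)=0$ for every basis vector $e_\nu$, which is (\ref{EsumA}) read backwards.

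I expect no genuine obstacle here, since the argument is purely algebraic; the only points demanding care are bookkeeping. One must check that invariance under $A(1)$ propagates to invariance under every $A(l)$, which is exactly where $A(1)\in\SL(n,\ZZ)$ is needed to keep all vectors inside the lattice $\ZZ^n$. The other mildly delicate step is the mod-$2$ translation: one has to read each $\eps'(e_\mu)\in\{0,1\}$ as an integer in order to pass between the $\ZZ_2$-identity $\sum_\mu(A(1)-I)_{\mu\nu}\eps'(e_\mu)=0$ and the stated integer condition $\sum_\mu\eps'(e_\mu)(A(1)-I)_{\mu\nu}\in 2\ZZ$.
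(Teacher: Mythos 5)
Your proof is correct and follows essentially the same route as the paper: restricting $\eps$ to the two factor subgroups, using the conjugation identity $(0,1)(k,0)(0,1)^{-1}=(A(1)k,0)$ to derive the invariance $\eps'(A(1)k)=\eps'(k)$, and translating this on basis vectors into the congruence (\ref{EsumA}). The only difference is that you spell out the converse (reduction of $\eps'(A(l)k')=\eps'(k')$ to the case $l=1$ via $A(1)^{\pm 1}\in\SL(n,\ZZ)$), which the paper leaves as "easy to check"; your bookkeeping there is accurate.
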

\begin{proof}
Any homomorphisms $\eps:\Gamma\to\ZZ_2$ defines homomorphisms
$\dot{\eps}:\ZZ\to\ZZ_2$, $\dot{\eps}(l)=\eps(0,l)$, and
$\eps':\ZZ^n\to\ZZ_2$, $\eps'(k)=\eps(k,0)$, where $\eps'$ satisfies
\[\eps'(A(l)k)=\eps(A(l)k,0)=\eps(\,(0,l)(k,0)(0,l)^{-1}\,)=\eps(k,0)=\eps'(k)\]
for all $l\in\ZZ$ and $k\in\ZZ^n$. The latter condition reduces to
$\eps'(A(1)k)=\eps'(k)$ for all $k$, and hence to
$\eps'(e_\nu)=\eps'(A(1)e_\nu)=\sum_\mu A_{\mu\nu}(1)\,\eps'(e_\mu)$
in $\ZZ_2$ for all $\nu$, which is equivalent to (\ref{EsumA}). It is easy
to check that the converse holds also true.\qed
\end{proof}

Since $\Gamma$ is uniform and discrete, there exists a unique normalised
right $G$-invariant Radon measure $\mu$ on~$\GG$ which can be obtained
as follows: Let $I_n=[0,1]^n$ denote the unit cube of $\RR^n$. Then
\begin{equation}\label{eq:inv_measure}
\int_{\GG}\ph\;d\mu=\int_0^1\Big(\int_{I_n}\;\ph(x,s)\;dx\Big)\;ds
\end{equation}
for all $\ph\in C(\GG)$. This can be proved using that the Haar measure of~$G$ equals
the Lebesgue measure of~$\RR^{n+1}$ and that $F=[0,1)^{n+1}$ is a fundamental set
for~$\Gamma$ on~$G$.

\subsection{Decomposition of the right-regular representation}

Let $A(t)$ be one-parameter subgroup of $\GL(n,\RR)$ such that $G=\RR^n\rtimes_A\RR$
is exponential and $\Gamma=\ZZ^n\rtimes_A\ZZ$ is a subgroup of $G$. Let
$\eps:\Gamma\to\ZZ_2$ be a group homomorphism. Our aim is to decompose the
right regular representation of $G$ on $L^2(G,\eps)$.

Let $C(G,\eps)$ denote the space of all continuous $\CC$-valued functions $\ph$
on $G$ satisfying $\ph(gy)=e^{i\pi\eps(g)}\,\ph(y)$ for all $g\in\Gamma$ and $y\in G$,
and $L^2(G,\eps)$ the completion of $C(G,\eps)$ with respect to the Hilbert norm
\[|\,\ph\,|_{L_2}^2=\int_{\GG}|\,\ph\,|^2\;d\mu\;.\]
Now right translation $\rho(x)\ph\;(y)=\ph(yx)$ gives rise to a unitary representation
of $G$ on~$L^2(G,\eps)$. This is precisely the definition of the induced representation
$\rho=\ind_\Gamma^G e^{i\pi\eps}$ of the unitary character $e^{i\pi\eps}$ of~$\Gamma$.
By Theorem~7.2.5 of \cite{W}, $\rho$ can be written as a countable orthogonal sum of
irreducible subrepresentations with finite multiplicity.
We will give such a decomposition explicitly, generalizing the results of $\cite{AB}$
for the three-dimensional Heisenberg group, which motivated this article.

To this end, we consider partial Fourier transformation with respect to the first n
variables: If $\ph\in C(G,\eps)$, then $\ph(k+A(l)x,l+t)=e^{i\pi\eps(k,l)}\,\ph(x,t)$
for all $(k,l)\in\Gamma$. In particular, $\ph(2k+x,t)=e^{i\pi\eps(2k,0)}\,\ph(x,t)=\ph(x,t)$
which shows that $x\mapsto\ph(x,t)$ is $2\ZZ^n$-invariant. For such functions
it is natural to consider 
\[\widehat\ph(\xi,t)=\int_{I_n} \ph(2x,t)\,e^{-2\pi i\langle\xi,x\ra}\,dx\]
for $\xi\in\ZZ^n$. Clearly $\ph$ is uniquely determined by its Fourier coefficient
functions.

It follows from~(\ref{eq:inv_measure}) that the restriction of $\ph\in L^2(G,\eps)$
to $2I_n\times[0,1]$ is $L^2$- and hence $L^1$-integrable with respect to the
Lebesgue measure. In particular, the integral in the definition of
$(F_\xi\cdot\ph)(t):=\widehat\ph(\xi,t)$ makes sense for almost all $t$.

\begin{pr}\label{pr:Plancherel}
For $\ph\in L^2(G,\eps)$ there holds the Plancherel formula
$$|\,\ph\,|^2_{L^2}=\sum_{\xi\in{\Bbb Z}^n}\;\int_0^1|\widehat \ph(\xi,t)|^2\;dt\;.$$
\end{pr}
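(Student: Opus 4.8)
The plan is to reduce the claimed Plancherel formula to the ordinary $L^2$-Plancherel
identity for Fourier series on the torus $\RR^n/2\ZZ^n$. The starting observation is that,
by the remark preceding the proposition, for $\ph\in L^2(G,\eps)$ the function
$x\mapsto\ph(2x,t)$ is $\ZZ^n$-periodic for almost every fixed $t\in[0,1]$: indeed, the
$\eps$-equivariance gives $\ph(2k+x,t)=\ph(x,t)$ for all $k\in\ZZ^n$, so the substitution
$x\mapsto 2x$ turns $\ph(\cdot,t)$ into a function on the unit cube $I_n=[0,1]^n$ with
periodic boundary conditions. The numbers $\widehat\ph(\xi,t)$, $\xi\in\ZZ^n$, are then
precisely the Fourier coefficients of this periodic function, and the change of variables
$x\mapsto 2x$ is built into the definition of $\widehat\ph$ so that the exponentials
$e^{2\pi i\la\xi,x\ra}$ form an orthonormal basis of $L^2(I_n)$.

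First I would fix $t$ and apply the classical Parseval identity on $L^2(I_n)$ to the
function $x\mapsto\ph(2x,t)$, obtaining
\[\int_{I_n}|\ph(2x,t)|^2\,dx=\sum_{\xi\in\ZZ^n}|\widehat\ph(\xi,t)|^2.\]
Next I would integrate this equality in $t$ over $[0,1]$ and interchange the sum and the
integral; since all terms are nonnegative this is justified by Tonelli's theorem, with no
convergence subtlety. This yields
\[\int_0^1\!\int_{I_n}|\ph(2x,t)|^2\,dx\,dt
=\sum_{\xi\in\ZZ^n}\int_0^1|\widehat\ph(\xi,t)|^2\,dt.\]
It then remains to identify the left-hand side with $|\ph|_{L^2}^2$. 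Using the explicit
description of the invariant measure in~(\ref{eq:inv_measure}), one has
$|\ph|_{L^2}^2=\int_0^1\bigl(\int_{I_n}|\ph(x,t)|^2\,dx\bigr)\,dt$, and the substitution
$x\mapsto 2x$ on the inner integral over the $2\ZZ^n$-periodic integrand $|\ph(\cdot,t)|^2$
shows $\int_{I_n}|\ph(2x,t)|^2\,dx=\int_{I_n}|\ph(x,t)|^2\,dx$ (both integrate the same
periodic function over a fundamental domain of $2\ZZ^n$). Combining the three displays
gives the asserted formula.

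I expect the only genuine point requiring care to be the verification that the
substitution $x\mapsto 2x$ does not alter the value of the spatial integral, i.e.\ that
integrating the $2\ZZ^n$-periodic function $|\ph(\cdot,t)|^2$ over the dilated cube $2I_n$
(scaled by the Jacobian) matches its integral over $I_n$ as it appears
in~(\ref{eq:inv_measure}); this hinges on $\ph(\cdot,t)$ having the correct periodicity,
which is exactly what the $\eps$-equivariance supplies. The Fubini/Tonelli interchange and
the torus Parseval identity are routine. A minor technical remark is that all statements
about $\ph(\cdot,t)$ being periodic and $L^2$ hold only for almost every $t$, which is
enough since the final identity is an equality of integrals in $t$.
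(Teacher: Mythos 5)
Your proof is correct and follows essentially the same route as the paper: fix $t$, apply the classical Parseval identity on the torus to $x\mapsto\ph(2x,t)$, then integrate in $t$ and interchange sum and integral (justified by nonnegativity), using the description~(\ref{eq:inv_measure}) of the invariant measure. The one bookkeeping point you flag --- that $\int_{I_n}|\ph(2x,t)|^2\,dx=\int_{I_n}|\ph(x,t)|^2\,dx$ --- is most cleanly seen from the fact that $|\ph(\cdot,t)|^2$ is actually $\ZZ^n$-periodic (the equivariance factor $e^{i\pi\eps(k,0)}$ has modulus one), rather than from $2\ZZ^n$-periodicity alone; the paper leaves this normalization implicit.
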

\begin{proof}
By the Plancherel theorem for $L^2$-functions on the torus, we obtain
$$|\,\ph\,|^2_{L^2}=\int_0^1\Big(\int_{I_n}\;|\,\ph(x,t)\,|^2\;dx\Big)\;dt
=\int_0^1\sum_{\xi \in{\Bbb Z}^n}\;|\widehat \ph(\xi,t)|^2\;dt\;,$$
where summation and integration can be interchanged. \qed
\end{proof}

As a corollary we get $F_\xi\cdot\ph\in L^2(\,[0,1]\,)$. Moreover, the
inequality $|\,F_\xi\cdot\ph\,|_{L^2}\le |\,\ph\,|_{L_2}$ shows that
$F_\xi:L^2(G,\eps)\to L^2(\,[0,1]\,)$ is a continuous linear operator.

Taking into account
\[\int_{I_n}g(Mx)\;dx=\int_{I_n}g(x)\;dx\]
for integrable $\ZZ^n$-invariant functions $g$ on $\RR^n$ and $M\in\SL(n,\ZZ)$,
one can prove that the $\eps$-equivariance of $\ph$ entails the following
conditions on its Fourier transform.

\begin{lm}\label{lm:eps_equiv}
For $\ph\in L^2(G,\eps)$ and $(k,l)\in\Gamma$ it holds
\begin{eqnarray*}
e^{i\pi\eps(k,l)}\;\widehat\ph(\xi,t)=e^{\pi i\la\,A(-l)^\top\xi\,,\,k\,\ra}\;
\widehat\ph(\,A(-l)^\top\xi\,,\,l+t\,)\;,
\end{eqnarray*}
where $A(l)^\top$ denotes the transpose of the operator $A(l)$ with respect
to the standard inner product on $\RR^n$.
\end{lm}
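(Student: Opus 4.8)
The plan is to substitute the explicit $\eps$-equivariance of $\ph$ into the defining integral for $\widehat\ph$ and then separate the effect of the two factors making up the group element $(k,l)$: the lattice translation by $k$ and the time shift by $l$. Writing out $\ph(gy)=e^{i\pi\eps(g)}\ph(y)$ for $g=(k,l)\in\Gamma$ together with the group law $(k,l)(y,t)=(k+A(l)y,l+t)$ gives the pointwise relation $\ph(k+A(l)y,l+t)=e^{i\pi\eps(k,l)}\ph(y,t)$. Setting $y=2x$ and inserting this into $\widehat\ph(\xi,t)=\int_{I_n}\ph(2x,t)e^{-2\pi i\la\xi,x\ra}\,dx$ yields
\[ e^{i\pi\eps(k,l)}\,\widehat\ph(\xi,t)=\int_{I_n}\ph(k+2A(l)x,l+t)\,e^{-2\pi i\la\xi,x\ra}\,dx, \]
so the task reduces to identifying the right-hand side with $e^{\pi i\la A(-l)^\top\xi,k\ra}\,\widehat\ph(A(-l)^\top\xi,l+t)$.

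First I would rewrite the character using the adjoint relation $\la\xi,x\ra=\la A(-l)^\top\xi,A(l)x\ra$ and abbreviate $F(v):=\ph(2v,l+t)$, which is $\ZZ^n$-periodic by the $2\ZZ^n$-invariance of $x\mapsto\ph(x,t)$ noted earlier in the text. The integrand then takes the form $H(A(l)x)$ with $H(v):=F(v+k/2)\,e^{-2\pi i\la A(-l)^\top\xi,v\ra}$. Since $A(l)\in\SL(n,\ZZ)$, its transpose is integral and unimodular, so $A(-l)^\top\xi\in\ZZ^n$; this is exactly what makes the character $\ZZ^n$-periodic and hence $H$ a $\ZZ^n$-invariant function. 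I can therefore apply the fundamental-domain identity displayed just before the lemma, with $M=A(l)$, to conclude $\int_{I_n}H(A(l)x)\,dx=\int_{I_n}H(v)\,dv$.

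It then remains to undo the half-integer shift hidden in $F(v+k/2)$. Shifting the integration variable to absorb the $k/2$ pulls the factor $e^{\pi i\la A(-l)^\top\xi,k\ra}$ out of the integral, and because the remaining integrand $F(v)\,e^{-2\pi i\la A(-l)^\top\xi,v\ra}$ is $\ZZ^n$-periodic, its integral over the resulting translated cube coincides with its integral over $I_n$, which is precisely $\widehat\ph(A(-l)^\top\xi,l+t)$. Combining the three steps gives the asserted identity.

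The one delicate point I expect is the bookkeeping forced by the factor $2$ in the definition of $\widehat\ph$: it converts the integer translation by $k$ into a half-integer shift by $k/2$, and it is exactly this half-shift that produces the phase $e^{\pi i\la A(-l)^\top\xi,k\ra}$. One must be careful that, after composing with the shear $A(l)$ and translating by $k/2$, the relevant region is still a fundamental domain for $\ZZ^n$, so that $\ZZ^n$-periodicity of the integrand permits the reduction back to $I_n$; verifying the integrality conditions $A(l)\in\SL(n,\ZZ)$ and $A(-l)^\top\xi\in\ZZ^n$ underlies each of these reductions.
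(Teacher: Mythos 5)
Your proof is correct and follows exactly the route the paper intends: the paper only sketches the argument by citing the identity $\int_{I_n}g(Mx)\,dx=\int_{I_n}g(x)\,dx$ for $\ZZ^n$-invariant $g$ and $M\in\SL(n,\ZZ)$, and your write-up supplies precisely the missing details (the equivariance substitution, the $\ZZ^n$-periodicity of the sheared integrand via $A(-l)^\top\xi\in\ZZ^n$, and the half-shift by $k/2$ producing the phase $e^{\pi i\la A(-l)^\top\xi,k\ra}$). No gaps.
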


The one-parameter subgroup $A$ represents the adjoint representation of the
subgroup $\RR\cong\{0\}\times\RR$ of $G$ on the Lie algebra $\RR^n$ of the
normal subgroup $\RR^n\times\{0\}$. Identifying the linear dual of
this Lie algebra with $\RR^n$ by means of the standard inner product, we
see that $t\mapsto A(t)^\top$ is the coadjoint representation. The preceding
lemma reveals the importance of this group action in the present context.
Any $\xi\in\RR^n$ has a $\ZZ$-orbit $\theta=\{A(l)^\top\xi:l\in\ZZ\}$ and an
$\RR$-orbit $\omega=\{A(t)^\top\xi:t\in\RR\}$.

Let $\ph\in L^2(G,\eps)$ and $\xi\in\ZZ^n$. The equality 
$\widehat{\ph}(\,A(l)^\top\xi,t)=e^{i\pi\eps(0,-l)}\,\widehat{\ph}(\xi,l+t)$ shows
that $\widehat{\ph}(\xi,\cdot)$ determines $\widehat{\ph}(\eta,\cdot)$ for
all $\eta\in\theta$. In particular, $\supp\,\widehat{\ph}:=\{\xi\in\ZZ^n:
\widehat{\ph}(\xi,\cdot)\neq 0\}$ is a $\ZZ$-invariant subset.

\begin{lm}\label{lm:Sigma}
The set
\[\Sigma_{\eps'}:=\{\,\xi\in\ZZ^n:\xi_\nu\in 2\ZZ+\eps'(e_\nu)
\mbox{ \textup{for all} }1\le\nu\le n\,\}\]
is $\ZZ$-invariant and contains $\supp\,\widehat{\ph}$ for every $\ph\in L^2(G,\eps)$.
\end{lm}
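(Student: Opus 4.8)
The plan is to prove the two assertions separately, as each reduces to a congruence modulo~$2$: the $\ZZ$-invariance follows from the defining relation on $\eps'$, and the support containment follows from the $l=0$ instance of Lemma~\ref{lm:eps_equiv}.

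For the invariance I would fix $\xi\in\Sigma_{\eps'}$ and $l\in\ZZ$ and compute the components of $A(l)^\top\xi$ modulo~$2$. Writing out the transpose, $(A(l)^\top\xi)_\nu=\sum_\mu A(l)_{\mu\nu}\,\xi_\mu$, and since the entries of $A(l)$ are integers and $\xi_\mu\equiv\eps'(e_\mu)\pmod 2$, this is congruent to $\sum_\mu A(l)_{\mu\nu}\,\eps'(e_\mu)\pmod 2$. The point is that this last sum equals $\eps'(A(l)e_\nu)$ in $\ZZ_2$, because $\eps'$ is a homomorphism and $A(l)e_\nu$ is precisely the $\nu$-th column of $A(l)$. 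By the $A$-invariance of $\eps'$ established in the proof of Lemma~\ref{lm:eps} (equivalently, the analogue of~(\ref{EsumA}) for $A(l)$), we have $\eps'(A(l)e_\nu)=\eps'(e_\nu)$, so $(A(l)^\top\xi)_\nu\equiv\eps'(e_\nu)\pmod 2$ for every $\nu$, i.e. $A(l)^\top\xi\in\Sigma_{\eps'}$.

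For the support statement the key is to specialise Lemma~\ref{lm:eps_equiv} to $l=0$. Since $A(0)=I$ and $\eps(k,0)=\eps'(k)$, that identity collapses to
\[e^{i\pi\eps'(k)}\,\widehat\ph(\xi,t)=e^{\pi i\la\xi,k\ra}\,\widehat\ph(\xi,t)\]
for every $k\in\ZZ^n$ and almost every $t$. As $\xi,k\in\ZZ^n$, both exponentials are signs, so this forces $\widehat\ph(\xi,\cdot)=0$ as an $L^2$-function unless $(-1)^{\eps'(k)}=(-1)^{\la\xi,k\ra}$ for all $k$, that is, unless $\eps'(k)\equiv\la\xi,k\ra\pmod 2$ for all $k\in\ZZ^n$. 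Taking $k=e_\nu$ shows this equality requires $\xi_\nu\equiv\eps'(e_\nu)\pmod 2$ for each $\nu$; conversely, by the additivity of $\eps'$ and of $\la\xi,\cdot\ra$, these $n$ congruences return the equality for all $k$. Hence $\widehat\ph(\xi,\cdot)\neq 0$ can occur only when $\xi\in\Sigma_{\eps'}$, which is exactly $\supp\,\widehat\ph\subset\Sigma_{\eps'}$.

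Neither step poses a serious obstacle once the right specialisation is chosen. The only points requiring care are the index bookkeeping with the transpose in the invariance argument — ensuring that it is the column $A(l)e_\nu$, not a row, that is fed into $\eps'$ — and, in the support argument, the observation that the $l=0$ instance of Lemma~\ref{lm:eps_equiv} already encodes the full constraint, so that the behaviour of the $\ZZ$-action for $l\neq 0$ is not needed here; one also uses that the prefactor is a constant in $t$, which is what upgrades the pointwise relation to the vanishing of $\widehat\ph(\xi,\cdot)$ in $L^2$.
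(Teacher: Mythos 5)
Your proof is correct and follows essentially the same route as the paper: the $\ZZ$-invariance comes from $\la A(l)^\top\xi,e_\nu\ra=\la\xi,A(l)e_\nu\ra$ together with the $A$-invariance of $\eps'$ from Lemma~\ref{lm:eps}, and the support statement comes from the $l=0$ case of Lemma~\ref{lm:eps_equiv} applied with $k=e_\nu$. No gaps.
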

\begin{proof}
Let $\xi\in\Sigma_{\eps'}$ and $l\in\ZZ$. Since $\eps'(A(l)^\top e_\nu)=\eps'(e_\nu)$
and $\la\xi,k\ra\in 2\ZZ+\eps'(k)$ for all $k\in\ZZ^n$, it follows
$\la A(l)^\top\xi,e_\nu\ra=\la\xi,A(l)e_\nu\ra\in 2\ZZ+\eps'(e_\nu)$ and
$A(l)^\top\xi\in\Sigma_{\eps'}$. This proves $\Sigma_{\eps'}$ to be
$\ZZ$-invariant. Let $\ph\in L^2(G,\eps)$ and $\xi\not\in\Sigma_{\eps'}$. Then
there is $1\le\nu\le n$ such that $\xi_\nu\not\in 2\ZZ+\eps'(e_\nu)$ and
hence $ e^{\pi i\eps'(e_\nu)}\neq e^{\pi i\la\xi,e_\nu\ra}$. On the other hand,
by Lemma~\ref{lm:eps_equiv} we have $e^{i\pi\eps(k,0)}\,\widehat{\ph}(\xi,t)
=e^{\pi i\la\xi,k\ra}\,\widehat{\ph}(\xi,t)$ for all $k$. This implies
$\widehat{\ph}(\xi,\cdot)=0$. \qed
\end{proof}

Note that $\xi\in\Sigma_{\eps'}$ implies $e^{i\pi\eps(k,0)}=e^{i\pi\la\xi,k\ra}$
for all $k\in\ZZ^n$.

Let $\ZZ\setminus\Sigma_{\eps'}$ denote the set of all $\ZZ$-orbits in
$\Sigma_{\eps'}$. For $\theta\in\ZZ\setminus\Sigma_{\eps'}$, it follows that
\[U_\theta\,:=\,\bigcap_{\xi\not\in\theta}\ker F_\xi\,=\,\{\ph\in L^2(G,\eps):
\supp\,\widehat{\ph}\subset\theta\}\,\neq\,0\]
is a closed subspace of $L^2(G,\eps)$. It will be shown below that $U_\theta$
is $\rho(G)$-invariant.

\begin{pr}\label{pr:L2_decomp}
These subspaces form an orthogonal decomposition
\[L^2(G,\eps)=\mathop{\overline{\bigoplus}}_{\theta\in{\Bbb Z}\setminus\Sigma_{\eps'}}
U_\theta.\]
\end{pr}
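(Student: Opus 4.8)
The plan is to transport the whole question to the Fourier side, where it becomes the obvious splitting of a Hilbert-space sum along $\ZZ$-orbits. Write ${\cal K}:=\bigoplus_{\xi\in\Sigma_{\eps'}}L^2([0,1])$ and let $T\colon L^2(G,\eps)\to{\cal K}$ be $T\ph=(\widehat\ph(\xi,\cdot))_{\xi\in\Sigma_{\eps'}}$, which is well defined by Lemma~\ref{lm:Sigma} and an isometry by Proposition~\ref{pr:Plancherel}. Since $\Sigma_{\eps'}$ is $\ZZ$-invariant it is the disjoint union of the orbits $\theta\in\ZZ\setminus\Sigma_{\eps'}$, so ${\cal K}=\mathop{\overline{\bigoplus}}_\theta {\cal K}_\theta$ orthogonally, where ${\cal K}_\theta:=\bigoplus_{\xi\in\theta}L^2([0,1])$. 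By the definition of $U_\theta$ we have $T(U_\theta)\subset{\cal K}_\theta$. Hence everything will follow once I show that $T$ is in fact onto, for then $T$ is a unitary isomorphism carrying each $U_\theta$ onto ${\cal K}_\theta$ and the orthogonal decomposition of ${\cal K}$ pulls back to the asserted decomposition of $L^2(G,\eps)$.

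Orthogonality of the $U_\theta$ is immediate and does not even need surjectivity: polarising Proposition~\ref{pr:Plancherel} gives $(\ph,\psi)=\sum_{\xi\in\ZZ^n}\int_0^1\widehat\ph(\xi,t)\,\overline{\widehat\psi(\xi,t)}\,dt$, and if $\ph\in U_\theta$, $\psi\in U_{\theta'}$ with $\theta\neq\theta'$ then $\supp\,\widehat\ph\subset\theta$ and $\supp\,\widehat\psi\subset\theta'$ are disjoint, so every summand vanishes. The substantial step is the surjectivity of $T$, i.e.\ that prescribing the coefficient functions freely over $\Sigma_{\eps'}$ and $t\in[0,1)$ always produces an honest $\eps$-equivariant $L^2$-function; in particular an orbit-truncated family will then have a preimage $\ph_\theta\in U_\theta$. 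I would prove this using the fundamental set $F=[0,1)^{n+1}$ for $\Gamma$. Because $e^{i\pi\eps}$ is a genuine character of $\Gamma$, restriction to $F$ identifies $L^2(G,\eps)$ isometrically with $L^2(F)$, any $\eps$-equivariant function being reconstructed off $F$ by the cocycle with no consistency obstruction. For fixed $t\in[0,1)$ the function $x\mapsto\ph(x,t)$ is $2\ZZ^n$-periodic, and expanding it in the characters of $\RR^n/2\ZZ^n$ shows that, once the quasi-periodicity $\ph(x+k,t)=e^{i\pi\eps'(k)}\ph(x,t)$ (the case $l=0$ of Lemma~\ref{lm:eps_equiv}) is imposed, the coefficient map is a unitary isomorphism of the $\eps'$-equivariant subspace onto $\ell^2(\Sigma_{\eps'})$; this quasi-periodicity is exactly what selects the index set $\Sigma_{\eps'}$, and it is the only relation of Lemma~\ref{lm:eps_equiv} that links coefficients at a single level $t\in[0,1)$. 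Integrating over $t$ then gives that $T$ is onto ${\cal K}$.

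The decomposition now follows at once: $T$ is a unitary isomorphism with $T(U_\theta)={\cal K}_\theta$, so $L^2(G,\eps)=T^{-1}({\cal K})=\mathop{\overline{\bigoplus}}_\theta T^{-1}({\cal K}_\theta)=\mathop{\overline{\bigoplus}}_\theta U_\theta$. The one point that genuinely needs care — and the main obstacle — is this surjectivity: one must be sure that the relations of Lemma~\ref{lm:eps_equiv} with $l\neq 0$ impose no hidden constraint on the data over $t\in[0,1)$, but relate it to values at $l+t\notin[0,1)$ and thus only describe the extension of $\ph$ off the fundamental set. The fundamental-domain picture makes this transparent, so that the coefficients over $\Sigma_{\eps'}$ and $t\in[0,1)$ are genuinely free.
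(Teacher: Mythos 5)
Your argument is correct, and it reaches density by a genuinely different route than the paper. Both proofs get orthogonality identically, by polarising Proposition~\ref{pr:Plancherel}. For completeness, the paper restricts to $\ph\in C^\infty(G,\eps)$, uses the decay of $\widehat\ph(\xi,t)$ in $\xi$ to show that each orbit-truncation $\ph_\theta(x,t)=\sum_{\xi\in\theta}\widehat\ph(\xi,t)e^{\pi i\la\xi,x\ra}$ is an honest smooth element of $U_\theta$ and that $\sum_\theta\ph_\theta$ converges to $\ph$ uniformly, hence in $L^2$; density of $C^\infty(G,\eps)$ finishes the job. You instead prove that the partial Fourier transform $T\ph=(\widehat\ph(\xi,\cdot)|_{[0,1)})_{\xi\in\Sigma_{\eps'}}$ is a surjective isometry onto $\bigoplus_{\xi\in\Sigma_{\eps'}}L^2([0,1])$, so that the decomposition is the pullback of the tautological orbit splitting of the coefficient space. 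The one point you rightly flag as the crux --- that the relations of Lemma~\ref{lm:eps_equiv} with $l\neq 0$ do not constrain the data over the fundamental set $F=[0,1)^{n+1}$ --- does hold: those relations couple $\widehat\ph(\xi,t)$, $t\in[0,1)$, only to values $\widehat\ph(A(-l)^\top\xi,l+t)$ with $l+t\notin[0,1)$, and the unique $\eps$-equivariant extension off $F$ (which exists because $e^{i\pi\eps}$ is a character and uses $\eps'\circ A(l)=\eps'$ from Lemma~\ref{lm:eps} to reconcile the $(k,0)$- and $(0,l)$-equivariances) satisfies them automatically; moreover, prescribing data supported on a single $\ZZ$-orbit $\theta$ does produce $\ph\in U_\theta$, since the vanishing of $\widehat\ph(\xi,\cdot)$ on $[0,1)$ for $\xi\notin\theta$ propagates to all $t$ by the $\ZZ$-invariance of $\theta$. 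The trade-off: your route avoids any smoothness or decay argument and delivers the stronger statement that $T$ is unitary onto the full coefficient space (which prefigures the isomorphisms $T_\xi$, $T_\eta$ of Lemma~\ref{lm:rho_eta}), whereas the paper's approximation argument yields as a by-product that $U_\theta\cap C^\infty(G,\eps)$ is dense in $U_\theta$, a fact it needs later when restricting $D$ to $U_\theta^\infty\otimes\Delta$; if you adopt your proof you would have to supply that density separately.
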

\begin{proof}
Let $\theta_1,\theta_2\in\ZZ\setminus\Sigma_{\eps'}$ be distinct orbits. By
Proposition~\ref{pr:Plancherel} and the polarisation identity, we obtain
\[\la\,\ph_1,\ph_2\,\ra_{L^2}\;=\;\sum_{\xi\in\Bbb Z^n}\,\int_0^1\,
\widehat\ph_1(\xi,t)\,\overline{\widehat\ph_2(\xi,t)}\;dt\;=\;0\]
for $\ph_1\in U_{\theta_1}$ and $\ph_2\in U_{\theta_2}$. Hence $U_{\theta_1}$
and $U_{\theta_2}$ are orthogonal. It remains to be shown that the direct sum of
the $U_\theta$ is dense. Since $C^\infty(G,\eps)$ is dense in $L^2(G,\eps)$,
it suffices to prove that every smooth $\eps$-equivariant function can
be approximated by a finite sum of functions in the $U_\theta$. By the decay
of the Fourier transform of $\ph\in C^\infty(G,\eps)$, it follows that
\[\ph_\theta(x,t)=\sum_{\xi\in\theta}\;\widehat\ph(\xi,t)\,e^{\pi i\la \xi,x\ra}\]
is a smooth function in $U_\theta$ and that
$\ph=\sum_{\theta\in{\Bbb Z}\setminus\Sigma_{\eps'}}\,\ph_\theta$ converges uniformly
on~$\RR^n\times [0,1]$. In particular, $|\,\ph-\sum_{\theta\in J}\ph_\theta\,|_{L^2}\to 0$
for $J\subset\ZZ\setminus\Sigma_{\eps'}$ finite and increasing. (This also proves
that $U_\theta\cap C^\infty(G,\eps)$ is dense in $U_\theta$.) \qed
\end{proof}

The right regular representation $\rho(x,s)\ph\;(y,t)=\ph(y+A(t)x,t+s)$ on
$L^2(G,\eps)$ is compatible with partial Fourier transform in the sense that
\begin{equation}\label{eq:rho_xi}
(\rho(x,s)\ph)\,^{\widehat{}}\ (\xi,t)=e^{\pi i\la A(t)^\top\xi,x\ra}\;
\widehat\ph(\xi,t+s)\;.
\end{equation}
In particular, $\widehat\ph(\xi,\cdot)=0$ implies
$(\rho(x,s)\ph)\,^{\widehat{}}\ (\xi,\cdot)=0$  for all $(x,s)\in G$
which proves $U_\theta$ to be $\rho(G)$-invariant. We define
$\rho_\theta=\rho\,|\,U_\theta$.

For any $\RR$-orbit $\omega\subset\RR^n$, we consider the stabilizer
$\dot{H}_\omega=\{t\in\RR:A(t)^\top\xi=\xi\}$ whose definition does not
depend on the choice of the point $\xi\in\omega$. Since $t\mapsto A(t)^\top$
is the coadjoint representation of an exponential Lie group, we know that the closed
subgroup $\dot{H}_\omega$ is connected, see p.\ 49 of~\cite{LL}. Thus there are
only two possibilities, either $\dot{H}_\omega=\RR$ or $\dot{H}_{\omega}=\{0\}$.

Suppose that $\theta\subset\Sigma_{\eps'}$ is a $\ZZ$-orbit such that
$\dot{H}_\omega=\RR$ where $\omega$ is the unique $\RR$-orbit containing~$\theta$.
Then $\omega=\theta=\{\xi\}$ is a fixed point. We claim that
$(T_\xi\cdot\ph)(t)=\widehat{\ph}(\xi,t)$ gives a unitary isomorphism
of $U_{\theta}$ onto $L^2(\RR,\dot{\eps})$. First
\[\widehat{\ph}(\xi,l+t)=\widehat{\ph}(\,A(-l)^\top\xi,l+t\,)
=e^{i\pi\dot{\eps}(l)}\,\widehat{\ph}(\xi,t)\]
by Lemma~\ref{lm:eps_equiv} which shows $T_\xi\cdot\ph\in L^2(\RR,\dot{\eps})$
for $\ph\in U_{\theta}$. Clearly $T_\xi$ is linear and
\[|\,T_\xi\cdot\ph\,|_{L^2}^2=\int_0^1|\,\widehat{\ph}(\xi,t)\,|^2\;dt
=|\,\ph\,|_{L^2}^2\]
by Proposition~\ref{pr:Plancherel}. If $\psi\in C(\RR,\dot{\eps})$,
then $\ph(x,t)=\psi(t)\,e^{\pi i\la\xi,x\ra}$ is in $C(G,\eps)$ by
Lemma~\ref{lm:Sigma}, and $T_\xi\cdot\ph=\psi$. Thus $T_\xi$ is onto.
From Equation~(\ref{eq:rho_xi}) it follows that the representation
$\rho_\xi(x,s)=T_\xi\,\rho_\theta(x,s)\,T^\ast_\xi$ on $L^2(\RR,\dot{\eps})$
is given by
\[\rho_\xi(x,s)\psi\,(t)=e^{\pi i\la \xi,x\ra}\;\psi(t+s)\;.\]
Define $\eps^\sharp(t)=e^{\pi i\dot{\eps}(1)t}$ for $t\in\RR$. Then
$\eps^\sharp$ is a unitary character of $\RR$ extending $\dot{\eps}$. Any
extension has the form $t\mapsto\eps^{\sharp}(t)\,e^{2\pi imt}$ for
some $m\in\ZZ$. By means of the unitary isomorphism
$(U\cdot\psi)(t)=\eps^\sharp(-t)\,\psi(t)$
of $L^2(\RR,\eps)$ onto $L^2(\TT)$, we see that $\rho_\xi$ is unitarily
equivalent to
\begin{equation}\label{eq:rho_xi_for_fixed_points}
\tilde{\rho}_\xi(x,s)\psi\;(t)=e^{\pi i\la\xi,x\ra}\;\eps^\sharp(s)\;
\psi(t+s)
\end{equation}
on $L^2(\TT)$. For $m\in\ZZ$ we consider the unitary character of $G$ given by
\[\chi_{\eps^{\sharp},\omega,m}(x,s)=e^{\pi i\la\xi,x\ra}\;e^{2\pi ims}\;
\eps^\sharp(s).\]
Finally, using the Fourier transformation and the Plancherel theorem
for $L^2$-functions on the torus, we conclude that $\rho_\theta$ is
unitarily equivalent to an orthogonal sum
\[\rho_\theta\;\cong\;\bigoplus_{m\in{\Bbb Z}}\chi_{\eps^{\sharp},\omega,m}\]
of $1$-dimensional subrepresentations. Note that up to isomorphism this
decomposition does not depend on the choice of the extension $\eps^\sharp$.

Now we suppose that $\theta\subset\Sigma_{\eps'}$ is a $\ZZ$-orbit such
that $\dot{H}_\omega=\{0\}$. This means that $\theta$ is an infinite set
and that the unique $\RR$-orbit $\omega$ containing $\theta$ is not
relatively compact. In this case we have
\begin{lm}\label{lm:rho_eta}
For every $\eta\in\omega$ there exists a unitary isomorphism $T_\eta$
of~$U_\theta$ onto~$L^2(\RR)$ which intertwines $\rho_\theta$ and
\begin{equation}\label{eq:rho_eta}
\rho_\eta(x,s)\psi\;(t)=e^{\pi i\la A(t)^\top\eta,x\ra}\;\psi(t+s)\;.
\end{equation}
\end{lm}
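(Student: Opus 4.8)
The plan is to follow the pattern of the fixed-point case treated just above, the essential new feature being that the $\ZZ$-orbit $\theta$ is now infinite, so that the Fourier coefficient functions along $\theta$ glue together into a single function on all of $\RR$ rather than on a circle. Concretely, I would fix a lattice representative $\xi_0\in\theta\subset\ZZ^n$. Since $\dot{H}_\omega=\{0\}$, the map $t\mapsto A(t)^\top\eta$ is an injective parametrisation of $\omega$, so there is a unique $t_0\in\RR$ with $A(t_0)^\top\eta=\xi_0$. For $\ph\in U_\theta$ I regard $t\mapsto\widehat\ph(\xi_0,t)$ as a function on all of $\RR$ (not merely on $[0,1]$) and set
\[(T_\eta\cdot\ph)(t):=\widehat\ph(\xi_0,t-t_0).\]

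That $T_\eta\cdot\ph$ is square-integrable and that $T_\eta$ is isometric follows by reorganising the Plancherel formula of Proposition~\ref{pr:Plancherel} over the orbit. The $k=0$ case of Lemma~\ref{lm:eps_equiv}, rewritten as $\widehat\ph(A(l)^\top\xi_0,t)=e^{i\pi\dot\eps(l)}\widehat\ph(\xi_0,t+l)$, shows $|\widehat\ph(A(l)^\top\xi_0,t)|=|\widehat\ph(\xi_0,t+l)|$, so together with $\supp\widehat\ph\subset\theta$ and the fact that $l\mapsto A(l)^\top\xi_0$ enumerates $\theta$ bijectively one obtains
\[\int_\RR|\widehat\ph(\xi_0,t)|^2\,dt=\sum_{l\in\ZZ}\int_0^1|\widehat\ph(A(l)^\top\xi_0,t)|^2\,dt=\sum_{\xi\in\theta}\int_0^1|\widehat\ph(\xi,t)|^2\,dt=|\,\ph\,|_{L^2}^2,\]
whence $|\,T_\eta\cdot\ph\,|_{L^2}=|\,\ph\,|_{L^2}$, the shift by $t_0$ being an isometry of $L^2(\RR)$. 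The intertwining property is then a direct computation from the compatibility of $\rho$ with the partial Fourier transform, Equation~(\ref{eq:rho_xi}), once one notes the key identity $A(t-t_0)^\top\xi_0=A(t-t_0)^\top A(t_0)^\top\eta=A(t)^\top\eta$; indeed
\[T_\eta(\rho(x,s)\ph)(t)=e^{\pi i\la A(t-t_0)^\top\xi_0,x\ra}\,\widehat\ph(\xi_0,t-t_0+s)=e^{\pi i\la A(t)^\top\eta,x\ra}\,(T_\eta\cdot\ph)(t+s)=\rho_\eta(x,s)(T_\eta\cdot\ph)(t).\]

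For surjectivity, given $\psi\in L^2(\RR)$ I would prescribe the Fourier data $\widehat\ph(A(l)^\top\xi_0,t):=e^{i\pi\dot\eps(l)}\psi(t_0+l+t)$ for $l\in\ZZ$ and $\widehat\ph(\xi,\cdot):=0$ for $\xi\notin\theta$, reconstruct $\ph$ on $\RR^n\times[0,1)$ by the inverse transform and extend it to $G$ by $\eps$-equivariance. The norm computation above, run backwards, gives $\ph\in L^2$ with $\supp\widehat\ph\subset\theta$, hence $\ph\in U_\theta$, and $T_\eta\cdot\ph=\psi$ by construction.

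The step that requires the most care — and the only place where the hypotheses on $\theta$ are genuinely used — is verifying that this reconstructed $\ph$ is actually $\eps$-equivariant, i.e.\ that it satisfies the full relation of Lemma~\ref{lm:eps_equiv}, not merely its $k=0$ instance built into the definition of the coefficients. Here the inclusion $\theta\subset\Sigma_{\eps'}$ is decisive: as noted after Lemma~\ref{lm:Sigma}, every $\xi\in\theta$ satisfies $\la\xi,k\ra\equiv\eps'(k)\pmod2$, so the phase $e^{\pi i\la A(-l)^\top\xi,k\ra}=e^{\pi i\eps'(k)}$ cancels against the factor $e^{i\pi\eps(k,l)}=e^{i\pi\eps'(k)}e^{i\pi\dot\eps(l)}$, collapsing the full equivariance to exactly the $k=0$ identity. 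With this checked, $T_\eta$ is the desired unitary isomorphism.
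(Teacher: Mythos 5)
Your proposal is correct and follows essentially the same route as the paper: the same operator $T_\eta\ph=\widehat\ph(\xi_0,\,\cdot-t_0)$ (the paper writes $\eta=A(r)^\top\xi$ and $(T_\eta\ph)(t)=\widehat\ph(\xi,t+r)$, i.e.\ $r=-t_0$), the same isometry computation reorganising the Plancherel sum over the orbit via the $k=0$ case of Lemma~\ref{lm:eps_equiv}, the same intertwining check from~(\ref{eq:rho_xi}), and the same use of $\theta\subset\Sigma_{\eps'}$ together with the injectivity of $l\mapsto A(l)^\top\xi_0$ to build preimages. The only cosmetic difference is in the surjectivity step, where the paper takes $\psi\in C_0(\RR)$ so that the reconstructing sum is locally finite and lands in $C(G,\eps)$, then concludes by density and closedness of the range of an isometry, whereas you prescribe Fourier data for general $\psi\in L^2$; both work.
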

\begin{proof}
Let $\xi\in\theta$ and $r\in\RR$ such that $\eta=A(r)^\top\xi$. We claim that
$(T_\eta\ph)(t)=\widehat\ph(\xi,t+r)$ is a unitary isomorphism satisfying our
needs: First of all, Proposition~\ref{pr:Plancherel} implies
\begin{eqnarray*}
|\,T_\eta\ph\,|_{L^2}^2 &=& \int_{-\infty}^{+\infty}\,|\,\widehat{\ph}(\xi,t+r)\,|^2\;dt
=\int_{-\infty}^{+\infty}\,|\,\widehat{\ph}(\xi,t)\,|^2\;dt\\
&=& \sum_{l\in\Bbb Z}\;\int_0^1\,|\widehat{\ph}(\xi,l+t)\,|^2\;dt
= \sum_{l\in\Bbb Z}\;\int_0^1\,|\widehat{\ph}(A(l)^\top\xi,t)\,|^2\;dt=|\,\ph\,|_{L^2}^2
\end{eqnarray*}
which shows that $T_\eta\ph\in L^2(\RR)$ is well-defined and that $T_\eta$ is isometric.
If $\psi\in C_0(\RR)$, then the sum
\[\ph(x,t)=\sum_{l\in\Bbb Z}e^{-i\pi\dot{\eps}(l)}\;\psi(l+t-r)\;
e^{\pi i\la A(l)^\top\xi,x\ra}\]
is locally finite in $t$ and $\ph\in C(G,\eps)$ by Lemma~\ref{lm:Sigma}. Using
$A(l)^\top\xi\neq\xi$ for $l\neq 0$, we conclude $T_\eta\ph=\psi$. This proves
$T_\eta$ to be surjective. Finally, we observe that (\ref{eq:rho_eta}) is a
consequence of (\ref{eq:rho_xi}). \qed
\end{proof}

Let $X_{\eps'}^\infty$ denote the set of all $\RR$-orbits which intersect the subset
$\Sigma_{\eps'}$ of $\ZZ^n$ and which are not relatively compact. Let $X_{\eps'}^0$
be the set of all orbits of the form $\omega=\{\,\xi\,\}$ for a fixed point
$\xi\in\Sigma_{\eps'}$. If $\omega\in X_{\eps'}^\infty$, $\eta\in\omega$ is arbitrary,
and $\theta_1,\theta_2$ are $\ZZ$-orbits contained in $\omega\cap\Sigma_{\eps'}$,
then $\rho_{\theta_1}\cong\rho_\eta\cong\rho_{\theta_2}$ by Lemma~\ref{lm:rho_eta}.
This implies
\[\bigoplus_{\theta\in{\Bbb Z}\setminus(\omega\cap\Sigma_{\eps'})}\rho_\theta\;
\cong\;m_{\eps,\omega}\,\rho_{\omega}\]
where $m_{\eps,\omega}$ is the number of $\ZZ$-orbits contained in
$\omega\cap\Sigma_{\eps'}$, which is apriori known to be finite, and $\rho_\omega$
is the common unitary equivalence class of the representations $\rho_\theta$
for $\theta\in\ZZ\setminus(\omega\cap\Sigma_{\eps'})$.

Summing up the preceding conclusions, we obtain

\begin{theo}\label{theo:decomp}
Let $A(t)$ be one-parameter group of $\GL(n,\RR)$ with $A(1)\in\SL(n,\ZZ)$ and such
that $G=\RR^n\rtimes_A\RR$ is exponential. Let $\eps:\Gamma\to\ZZ_2$ be a homomorphism.
Then the right regular representation $\rho$ of $G$ in~$L^2(G,\epsilon)$
decomposes as follows:
\[\rho\;\cong\;\bigoplus_{\theta\in{\Bbb Z}\setminus\Sigma_{\eps'}}\rho_\theta\;
\cong\;\left(\,\bigoplus_{\omega\in X_{\eps'}^0}\bigoplus_{m\in{\Bbb Z}}
\chi_{\dot{\eps},\omega,m}\,\right)\;\bigoplus\;\left(\,
\bigoplus_{\omega\in X_{\eps'}^\infty}m_{\eps,\omega}\;\rho_\omega\,\right)\]
where the multiplicities $m_{\epsilon,\omega}=\#\ZZ\setminus(\omega\cap\Sigma_{\epsilon'})$
are finite, the
\[\chi_{\dot{\eps},\omega,m}(x,s)=e^{\pi i\la\xi,x\ra}\;e^{\pi i(\,2m+\dot{\eps}(1)\,)s}\]
are characters of $G$, and the $\rho_\omega$ are irreducible on $L^2(\RR)$. For
every $\eta\in\omega$,
\[\rho_\eta(x,s)\psi\;(t)=e^{\pi i\la A(t)^\top\eta,x\ra}\;\psi(t+s)\]
is a representative for the unitary equivalence class of $\rho_\omega$.
Moreover, the representations $\{\,\chi_{\dot{\eps},\omega,m}:\omega\in X_{\eps'}^0,
m\in\ZZ\} \cup\{\,\rho_\omega:\omega\in X_{\eps'}^\infty\,\}$ are mutually
inequivalent.
\end{theo}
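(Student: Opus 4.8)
The plan is to assemble the constituents that the preceding discussion has already isolated and then supply the three assertions still missing from the statement: finiteness of the multiplicities, irreducibility of the $\rho_\omega$, and pairwise inequivalence. The first isomorphism $\rho\cong\bigoplus_{\theta\in\ZZ\setminus\Sigma_{\eps'}}\rho_\theta$ is Proposition~\ref{pr:L2_decomp} together with the $\rho(G)$-invariance of each $U_\theta$. It then remains to regroup the summands according to the $\RR$-orbit $\omega$ containing $\theta$. For a fixed point $\omega=\{\xi\}\in X_{\eps'}^0$, the computation preceding the theorem already yields $\rho_\theta\cong\bigoplus_{m\in\ZZ}\chi_{\eps^\sharp,\omega,m}$, and one only checks that with $\eps^\sharp(s)=e^{\pi i\dot{\eps}(1)s}$ these extension-free characters coincide with $\chi_{\dot{\eps},\omega,m}(x,s)=e^{\pi i\la\xi,x\ra}e^{\pi i(2m+\dot{\eps}(1))s}$. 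For $\omega\in X_{\eps'}^\infty$, Lemma~\ref{lm:rho_eta} identifies every $\rho_\theta$ with $\theta\subset\omega$ with the single model $\rho_\eta$, so the $\ZZ$-orbits inside $\omega\cap\Sigma_{\eps'}$ contribute $m_{\eps,\omega}\,\rho_\omega$. Finiteness of $m_{\eps,\omega}=\#(\ZZ\setminus(\omega\cap\Sigma_{\eps'}))$ I would then read off from Theorem~7.2.5 of~\cite{W}: once each $\rho_\omega$ is known to be irreducible, $m_{\eps,\omega}$ is bounded by the (finite) multiplicity of $\rho_\omega$ in the induced representation $\rho=\ind_\Gamma^G e^{i\pi\eps}$.

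The irreducibility of $\rho_\omega$ I would prove by Schur's lemma, showing the commutant of the model $\rho_\eta$ on $L^2(\RR)$ is trivial. Let $T$ commute with all $\rho_\eta(x,s)$. Restricted to the normal subgroup $N=\RR^n\times\{0\}$, the operators $\rho_\eta(x,0)$ act by multiplication by $t\mapsto e^{\pi i\la A(t)^\top\eta,x\ra}$. Because $\dot{H}_\omega=\{0\}$, the map $t\mapsto A(t)^\top\eta$ is injective, so for $t_1\neq t_2$ the nonzero vector $A(t_1)^\top\eta-A(t_2)^\top\eta$ can be paired with some $x$ to a value outside $2\ZZ$; hence the functions $t\mapsto e^{\pi i\la A(t)^\top\eta,x\ra}$ separate the points of $\RR$ as $x$ ranges over $\RR^n$. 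Consequently the von Neumann algebra they generate is the maximal abelian algebra $L^\infty(\RR)$, whose commutant is itself, so $T=M_f$ for some $f\in L^\infty(\RR)$. Commuting with the translations $\rho_\eta(0,s)\psi(t)=\psi(t+s)$ forces $f(\cdot+s)=f$ for all $s$, whence $f$ is constant and $T$ is scalar.

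For the pairwise inequivalence I would separate the summands by their restriction to $N$. The one-dimensional $\chi_{\dot{\eps},\omega,m}$ cannot match any infinite-dimensional $\rho_{\omega'}$ for dimensional reasons. Two characters $\chi_{\dot{\eps},\omega,m}$ and $\chi_{\dot{\eps},\omega',m'}$ agree on $N$ only if $\{\xi\}=\omega=\omega'=\{\xi'\}$ (equality of $e^{\pi i\la\xi,x\ra}$ and $e^{\pi i\la\xi',x\ra}$ for all real $x$ forces $\xi=\xi'$), and then agree on the $\RR$-factor only if $2m+\dot{\eps}(1)=2m'+\dot{\eps}(1)$, i.e.\ $m=m'$. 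Finally, the restriction $\rho_\eta|_N$ is the direct integral $\int_\omega^\oplus\chi_\mu\,d\mu$ of the characters $\chi_\mu(x)=e^{\pi i\la\mu,x\ra}$, $\mu\in\omega$, so its spectral support in $\widehat{N}$ is $\omega$; since distinct $\RR$-orbits are disjoint, $\rho_\omega\cong\rho_{\omega'}$ forces $\omega=\omega'$.

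The step I expect to be the main obstacle is the irreducibility argument, precisely the claim that the multiplication operators $\rho_\eta(x,0)$ generate all of $L^\infty(\RR)$: this is where the hypothesis $\dot{H}_\omega=\{0\}$ is indispensable, and one must argue that mere separation of points upgrades to generation of the full maximal abelian algebra. A secondary technical point is that the spectral support of $\rho_\eta|_N$ is exactly $\omega$ rather than its closure; this uses that the orbits of the coadjoint action of the exponential group $G$ are closed, which is convenient to invoke explicitly when the orbit fails to be relatively compact.
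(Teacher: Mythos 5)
Your proposal is correct and follows essentially the same route as the paper: the decomposition is assembled from Proposition~\ref{pr:L2_decomp}, the fixed-point analysis and Lemma~\ref{lm:rho_eta}; finiteness of $m_{\eps,\omega}$ is read off from Theorem~7.2.5 of~\cite{W}; irreducibility rests on the fact that the multiplication operators $\rho_\eta(x,0)$ together with translations act irreducibly on $L^2(\RR)$ (your commutant computation is the mirror image of the paper's invariant-subspace argument, and the masa-generation step you flag is exactly the point the paper also passes over, justified there by injectivity of $t\mapsto A(t)^\top\eta$ and local closedness of the orbit); and inequivalence is detected by restricting to $N$, where the paper phrases your spectral-support observation in terms of $C^\ast$-kernels in $C^\ast(N)\cong C_\infty(\widehat{N})$. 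One small correction: coadjoint orbits of an exponential group are in general only \emph{locally} closed, not closed, but local closedness (which is what the paper invokes) already suffices to separate $\rho_{\omega_1}$ from $\rho_{\omega_2}$.
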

\begin{proof}
It remains to verify the last assertion and the irreducibility of $\rho_\omega$.
Clearly characters are unitarily equivalent if and only if they are equal, and
not unitarily equivalent to a representation on~$L^2(\RR)$. Let $C^\ast(N)$ be
the enveloping $C^\ast$-algebra of the group algebra $L^1(N)$ of $N=\RR^n\times\{0\}$.
Recall that $C^\ast(N)$ is isomorphic to $C_\infty(\widehat{N})$ via Fourier
transformation. The above formula for $\rho_\eta$ shows that the $C^\ast$-kernel
of the integrated form of $\rho_\omega\,|\,N$ consists of all $g\in C^\ast(N)$
whose Fourier transform vanishes on $\omega$. Since the $\RR$-orbits are locally
closed, it follows that $\rho_{\omega_1}$ and $\rho_{\omega_2}$ are inequivalent
whenever $\omega_1\neq\omega_2$. -- If $U$ is a closed $\rho_\eta(G)$-invariant
subspace of $L^2(\RR)$, then $U$ is invariant under translations and multiplication
by bounded continuous functions. Thus it follows $U=\{0\}$ or $U=L^2(\RR)$. \qed
\end{proof}

\begin{lm}\label{lm:range_of_T_eta}
Suppose that $G=\RR^n\rtimes_A\RR$ is a nilpotent Lie group. Let
$\omega\in X_{\eps'}^\infty$ and $\theta\in\ZZ\setminus(\omega\cap\Sigma_{\eps'})$.
For $\eta\in\omega$ let $T_\eta$ denote the unitary isomorphism of $U_\theta$
onto $L^2(\RR)$ defined in the proof of Lemma~\ref{lm:rho_eta}. Then for every
Schwartz function $\psi\in\cS(\RR)$ there exists a (unique)
$\ph\in U_\theta\cap C^\infty(G,\eps)$ such that $T_\eta\ph=\psi$.
\end{lm}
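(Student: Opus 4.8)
The plan is to exhibit an explicit smooth candidate and then to identify it with the abstract preimage guaranteed by unitarity of $T_\eta$. Fix $\xi\in\theta$ and $r\in\RR$ with $\eta=A(r)^\top\xi$, as in the proof of Lemma~\ref{lm:rho_eta}, and generalise the construction there from $\psi\in C_0(\RR)$ to $\psi\in\cS(\RR)$ by setting
\[\ph(x,t)=\sum_{l\in\ZZ}e^{-i\pi\dot{\eps}(l)}\,\psi(l+t-r)\,e^{\pi i\la A(l)^\top\xi,x\ra}.\]
Since $T_\eta$ is a unitary isomorphism of $U_\theta$ onto $L^2(\RR)$, a preimage of $\psi$ exists and is unique; the content of the lemma is that it is smooth. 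So it suffices to show that the displayed series defines a function $\ph\in U_\theta\cap C^\infty(G,\eps)$ with $T_\eta\ph=\psi$.

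The key step, and the only place where nilpotency enters, is convergence and smoothness. Because $B$ is nilpotent, the entries of $A(t)=\exp(tB)$ are polynomials in $t$, so $\|A(l)^\top\xi\|$ grows at most polynomially in $l$. On any compact set in the $(x,t)$-coordinates, applying $\partial_x^\alpha\partial_t^\beta$ to the $l$-th term produces a factor $(A(l)^\top\xi)^\alpha$ of polynomial size together with $\psi^{(\beta)}(l+t-r)$, which, being Schwartz, decays faster than any power of $l$. Hence the series of $\alpha,\beta$-derivatives converges absolutely and uniformly on compact sets, and termwise differentiation gives $\ph\in C^\infty(G)$. (This is also where the hypothesis cannot be dropped: for merely exponential $G$ the norms $\|A(l)^\top\xi\|$ grow exponentially, which a general Schwartz function need not outdecay.)

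Equivariance $\ph\in C^\infty(G,\eps)$ follows exactly as in Lemma~\ref{lm:rho_eta}: expanding $\ph((k,m)(x,t))$, using $\la A(l)^\top\xi,A(m)x\ra=\la A(l+m)^\top\xi,x\ra$, reindexing $l\mapsto l+m$, and invoking $\dot{\eps}(l-m)=\dot{\eps}(l)-\dot{\eps}(m)$ together with $e^{i\pi\la A(l)^\top\xi,k\ra}=e^{i\pi\eps'(k)}$ (valid since $A(l)^\top\xi\in\Sigma_{\eps'}$ by Lemma~\ref{lm:Sigma}), which recombines to $e^{i\pi(\eps'(k)+\dot{\eps}(m))}=e^{i\pi\eps(k,m)}$ by Lemma~\ref{lm:eps}.

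Finally I would compute the Fourier coefficients. Interchanging sum and integral (legitimate by the uniform convergence above), writing out $\widehat\ph(\xi',t)=\int_{I_n}\ph(2x,t)\,e^{-2\pi i\la\xi',x\ra}\,dx$, and using $A(l)^\top\xi\in\ZZ^n$ (as $A(l)\in\SL(n,\ZZ)$) together with $\int_{I_n}e^{2\pi i\la\mu,x\ra}\,dx=\delta_{\mu,0}$ for $\mu\in\ZZ^n$ gives $\widehat\ph(\xi',t)=0$ unless $\xi'=A(l)^\top\xi$ for some $l\in\ZZ$; since $\dot{H}_\omega=\{0\}$ the $\ZZ$-action on $\theta$ is free, so such an $l$ is unique and $\widehat\ph(A(l)^\top\xi,t)=e^{-i\pi\dot{\eps}(l)}\psi(l+t-r)$. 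Thus $\supp\,\widehat\ph\subset\theta$, and continuity together with $\eps$-equivariance makes $\ph$ bounded on the compact fundamental domain, so $\ph\in L^2(G,\eps)$ and hence $\ph\in U_\theta$. Taking $l=0$ yields $\widehat\ph(\xi,t)=\psi(t-r)$, whence $(T_\eta\ph)(t)=\widehat\ph(\xi,t+r)=\psi(t)$. Uniqueness is immediate from injectivity of the unitary map $T_\eta$. I expect the smoothness estimate of the second paragraph to be the only genuine obstacle; the remaining steps are bookkeeping with Lemmas~\ref{lm:eps},~\ref{lm:Sigma}, and~\ref{lm:rho_eta}.
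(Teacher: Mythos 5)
Your proposal is correct and follows essentially the same route as the paper: the same explicit series for $\ph$, with convergence of the termwise derivatives established from the polynomial growth of $A(l)^\top\xi$ (nilpotency of $B$) against the rapid decay of the Schwartz function $\psi$. The only difference is that you spell out the $\eps$-equivariance and the Fourier-coefficient computation showing $\supp\widehat\ph\subset\theta$ and $T_\eta\ph=\psi$, which the paper dispatches with a reference to the proof of Lemma~\ref{lm:rho_eta} and the word ``clearly''; your added detail is accurate.
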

\begin{proof}
Given $\psi\in\cS(\RR)$ we consider
\[\ph(x,t)=\sum_{l\in{\Bbb Z}}e^{-i\pi\dot{\eps}(l)}\;\psi(l+t-r)\;
e^{\pi i\la A(l)^\top\xi,x\ra}\]
whose formal derivatives are
\[(\partial_t^\alpha\partial_x^\beta\ph)(x,t)=\sum_{l\in{\Bbb Z}}e^{-i\pi\dot{\eps}(l)}\;
(\pi i)^{|\beta|}\;(A(l)^\top\xi)^\beta\;(\partial_t^\alpha\psi)(l+t-r)\;
e^{\pi i\la A(l)^\top\xi,x\ra}\;.\]
Since $B$ is nilpotent, the expression $A(l)^\top\xi=\exp(lB)^\top\xi$ is polynomial
in $l$. Hence for each multi-index $\beta$ there exist constants $N\in\NN$ and $C_0>0$
such that
\[|\,(A(l)^\top\xi)^\beta\,|\le C_0\,(\,1+l^2\,)^N\;.\]
for all $l\in\ZZ$. On the other hand, since $\psi\in\cS(\RR)$, for each $\alpha$
there are $C_1,C_2>0$ such that
\[|\,(\partial_t^\alpha\psi)(l+t-r)\,|\le C_1\,|\,1+(l+t-r)^2\,|^{-(N+1)}\le
C_2\,|\,1+l^2\,|^{-(N+1)}\]
for all $l$, and for $t$ ranging over a compact subset $K$ of $\RR$. This implies
that the above series converge absolutely and uniformly on $\RR^n\times K$ so that
$\ph\in C^\infty(G,\eps)$ is well-defined. Clearly $\ph\in U_\theta$ and
$T_\eta\ph=\psi$. \qed
\end{proof}

\subsection{Sub-Dirac operators with discrete spectrum}

Let $(H,\la\cdot,\cdot\ra)$ be a real vector space with inner product and
$(\Delta,\la\cdot,\cdot\ra)$ a complex vector space with a hermitian inner
product. Suppose that $\Delta$ carries a $\Cl(H)$-module structure such that
$\la x\cdot v,w\ra=-\la v,x\cdot w\ra$ for all $x\in H\subset\Cl(H)$ and
$v,w\in \Delta$. Let $s_1,\ldots,s_d$ be an orthonormal basis of $H$ and
$a\in H$ a non-zero multiple of $s_d$. Furthermore, let $\Omega:\RR\to\Span\{s_1,
\ldots,s_{d-1}\}$ be a non-constant polynomial function. We consider the
operator
\[P=a\,\del_t+i\,\Omega(t)\]
on the domain $\cS(\RR,\Delta)$. Here $a,\Omega(t)\in\Cl(H)$ are understood as
operators acting by pointwise multiplication. Clearly $P$ is symmetric with
respect to the $L^2$-inner product and densely defined in the Hilbert
space $L^2(\RR,\Delta)$. Thus $P$ is closable. The closure $\bar{P}$ of $P$
is a symmetric operator. Let $P^\ast$ denote the adjoint of $P$. On its domain
\[\dom(P^\ast)=\{\,\psi\in L^2(\RR,\Delta): \ph\mapsto\la P\ph,\psi\ra_{L^2}
\mbox{ is continuous w.r.t.\ to the }L^2\mbox{-norm}\,\}\]
we consider the norm $|\cdot|_P$ given by $|\,\psi\,|_P^2=|\,\psi\,|_{L^2}^2
+|\,P^\ast\psi\,|_{L^2}^2$. Our aim is to prove the following result.

\begin{pr}\label{pr:discrete_spectrum}
The operator $P$ is essentially self-adjoint and its closure $\bar{P}$ has
discrete spectrum.
\end{pr}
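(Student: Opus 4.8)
The strategy is to derive both statements from the algebra of $P$ and the structure of $P^2$. Writing $a=c\,s_d$ with $c\neq0$, the Clifford relations give $a\cdot a=-c^2$, Clifford multiplication by $a$ is skew-Hermitian, and $i\,\Omega(t)$ is Hermitian because $\Omega(t)\in H$ acts skew-Hermitianly. Since $a$ is orthogonal to $\Omega(t)$ and to $\Omega'(t)$, one has $a\,\Omega(t)+\Omega(t)\,a=-2\la a,\Omega(t)\ra=0$ and likewise for $\Omega'$. A direct computation, in which the term linear in $\del_t$ cancels by this anticommutation, then yields
\[P^2=-c^2\,\del_t^2+|\Omega(t)|^2\,\Id+i\,a\,\Omega'(t),\]
where the potential $V(t):=|\Omega(t)|^2\,\Id+i\,a\,\Omega'(t)$ is a Hermitian matrix-valued polynomial.

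For essential self-adjointness I would verify that the deficiency spaces $\Ker(P^\ast\mp i)$ are trivial. As $P$ is formally self-adjoint, any $\psi\in\dom(P^\ast)$ with $P^\ast\psi=\pm i\,\psi$ is a weak, hence (since $a$ is invertible, so that $a\psi'=(\pm i-i\,\Omega(t))\psi$ is a regular linear ODE) smooth solution. The crucial observation is that $q(t):=\tfrac1i\la a\,\psi(t),\psi(t)\ra$ is real (as $a$ is skew-Hermitian) and satisfies $q'=\pm2\,|\psi(t)|^2$: differentiating gives $\tfrac{d}{dt}\la a\psi,\psi\ra=2i\,\Im\la a\psi',\psi\ra$, and the ODE together with the reality of $\la i\Omega\psi,\psi\ra$ yields $\Im\la a\psi',\psi\ra=\pm|\psi|^2$. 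Thus $q$ is monotone with $\int_\RR q'\,dt=\pm2\,|\psi|_{L^2}^2$ finite, so $q$ is bounded and has limits at $\pm\infty$; since $\psi\in L^2(\RR,\Delta)$ is continuous we have $\liminf_{|t|\to\infty}|\psi(t)|=0$, whence $q(\pm\infty)=0$ and therefore $\pm2\,|\psi|_{L^2}^2=\int_\RR q'\,dt=q(+\infty)-q(-\infty)=0$. Hence $\psi=0$, both deficiency indices vanish, and $P$ is essentially self-adjoint.

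To obtain discreteness of the spectrum of $\bar P$ I would pass to $\bar P^2$. Since $\Omega$ is a non-constant polynomial, $|\Omega(t)|^2$ is a polynomial of even degree with positive leading coefficient, while $\|i\,a\,\Omega'(t)\|=|c|\,|\Omega'(t)|$ is of strictly smaller degree; hence the smallest eigenvalue of $V(t)$ satisfies $\lambda_{\min}(V(t))\ge|\Omega(t)|^2-|c|\,|\Omega'(t)|\to+\infty$ as $|t|\to\infty$. The Schrödinger operator $H=-c^2\,\del_t^2+V(t)$ with this confining, bounded-below potential is essentially self-adjoint on $\cS(\RR,\Delta)$, and since $\bar P^2$ is a self-adjoint extension of $H|_{\cS}$ we get $\bar P^2=\bar H$. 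For $\phi\in\cS(\RR,\Delta)$,
\[|P\phi|_{L^2}^2=\la P^2\phi,\phi\ra_{L^2}=c^2\,|\del_t\phi|_{L^2}^2+\int_\RR\la V(t)\phi(t),\phi(t)\ra\,dt\ge c^2\,|\del_t\phi|_{L^2}^2+\int_\RR\lambda_{\min}(V(t))\,|\phi(t)|^2\,dt,\]
so $\bar H$ dominates, as a quadratic form, the operator $-c^2\del_t^2+\lambda_{\min}(V(t))$ acting diagonally on $L^2(\RR,\Delta)$, whose spectrum is discrete. By the min-max principle the min-max values of $\bar P^2=\bar H$ tend to $+\infty$, so $\bar P^2$ has discrete spectrum, and hence so does $\bar P$.

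The delicate step is the essential self-adjointness of the first-order operator $P$ itself: the monotonicity of $q$ is exactly what forces the deficiency spaces to vanish, and only afterwards does the confining nature of $P^2$ deliver discreteness of the spectrum.
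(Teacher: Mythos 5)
Your proof is correct, but it takes a genuinely different route from the paper's at both stages. For essential self-adjointness the paper uses the standard cutoff scheme for Dirac operators: a deficiency vector $\psi\in\Ker(P^\ast\pm iI)$ is shown to be smooth by elliptic regularity and then approximated in the graph norm by $h_k\psi$, $h_k(t)=h(t/k)$, so that $\psi\in\dom(\bar P)$ and hence $\psi=0$ by symmetry of $\bar P$. Your flux argument --- monotonicity of $q(t)=\tfrac1i\la a\psi(t),\psi(t)\ra$ with $q'=\pm2|\psi|^2$ and $q(\pm\infty)=0$ --- is shorter and exploits the one-dimensional base in an essential way; it would not generalize to Dirac operators on higher-dimensional complete manifolds, which is exactly what the cutoff argument is built for. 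For discreteness the paper does not square $P$: it introduces the weighted space $X=\{\ph:\del_t\ph,\,\Omega\cdot\ph\in L^2\}$, proves a Rellich-type compact embedding $X\to L^2$ from $|\Omega(t)|\to\infty$, and shows $\dom(\bar P)\subset X$ continuously via the identity $|P\ph|_{L^2}^2=|\del_t\ph|_{L^2}^2-\la a\cdot\ph,i(\del_t\Omega)\cdot\ph\ra_{L^2}+|\Omega\cdot\ph|_{L^2}^2$, which is precisely the integrated form of your formula for $P^2$ and rests on the same anticommutation of $a$ with $\Omega$ and $\Omega'$ and the same domination of $|c|\,|\Omega'|$ by $|\Omega|^2$ at infinity; compactness of the resolvent then gives discreteness, whereas you get it by a min-max comparison of $\bar P^2$ with the scalar confining Schr\"odinger operator $-c^2\del_t^2+\lambda_{\min}(V(t))$. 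Two small points you should make explicit: the identification $\bar P^2=\bar H$ either needs essential self-adjointness of $H$ on $\cS(\RR,\Delta)$ (standard for Hermitian polynomial potentials bounded below) or can be bypassed by observing that $\cS(\RR,\Delta)$, being an operator core for $\bar P$, is automatically a form core for $\bar P^2$; and the final step uses that compactness of $(\bar P^2+1)^{-1}=(\bar P+i)^{-1}\bigl((\bar P+i)^{-1}\bigr)^\ast$ forces $(\bar P+i)^{-1}$ itself to be compact. Neither is a gap, only an omission of routine detail.
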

\begin{proof}
We can assume $|a|=1$ what will simplify the estimates below.

To prove the first assertion, we imitate the proof of the essential
selfadjointness of the Dirac operator, compare Theorem~5.7 of~\cite{LM}
and Proposition~1.3.5 of~\cite{G}. As a basic fact we know that it suffices
to verify $\ker(P^\ast\pm iI)=\{0\}$. Moreover, since $\bar{P}$ is symmetric,
it is enough to show that $\ker(P^\ast\pm iI)\subset\dom(\bar{P})$. To begin
with, we note that, if $f\in\cS(\RR)$ and $\psi\in\dom(P^\ast)$, then
$f\psi\in\dom(P^\ast)$ and
\[P^\ast(f\psi)=f(P^\ast\psi)+(\del_tf)a\cdot\psi\;.\]
Let $\psi\in\ker(P^\ast\pm iI)$. If $\hat{P}$ denotes the extension
of $P$ to tempered distributions, then we get, as $P$ is symmetric,
$(\hat{P}\pm iI)\psi=P^\ast\psi\pm i\psi=0$. Since the principal symbol
$p(\xi)=\xi\,a$ of $\hat{P}\pm iI$ is invertible for $\xi\neq 0$, the
regularity theorem for elliptic differential operators implies that $\psi$
is a smooth function. Choose $h\in C_0^\infty(\RR)$ satisfying $0\le h\le 1$
and $h(0)=1$, and put $h_k(t)=h(t/k)$ for $k\ge 1$. By definition $h_k\to 1$
and $h_k\psi\in C_0^\infty(\RR,\Delta)\subset\dom(P)$. Since
\[\Big|\,(\del_th_k)a\cdot\psi\,\Big|_{L^2}\le|\,\del_th_k\,|_\infty\,|\,a\,|\;
|\,\psi\,|_{L^2}\le\frac{1}{k}\,|\,\del_th\,|_\infty\,|\,\psi\,|_{L^2}\;,\]
it follows that
\begin{eqnarray*}
|\,\psi-h_k\psi\,|_P^2 &=& |\psi-h_k\psi|_{L^2}^2
+|P^\ast\psi-P^\ast(h_k\psi)|_{L^2}^2\\
&\le& |\psi-h_k\psi|_{L^2}^2+\left(\;|P^\ast\psi-h_k(P^\ast\psi)|_{L^2}
+|\,(\del_th_k)a\cdot\psi\,|_{L^2}\;\right)^2
\end{eqnarray*}
converges to $0$ for $k\to\infty$ by dominated convergence. Hence
$\psi\in\dom(\bar{P})$. This establishes the essential selfadjointness of $P$.

To prove that the spectrum of $\bar{P}$ is discrete, we need the following
two lemmata.
\begin{lm}\label{lm:cp_emb}
Let $(\Delta,\la\cdot,\cdot\ra)$ be a $\Cl(H)$-module as above and
$\Omega:\RR\to H$ a continuous function satisfying $|\,\Omega(t)\,|\to\infty$
for $|\,t\,|\to\infty$. Then 
\[X:=\{\,\ph\in L^2(\RR,\Delta):\del_t\ph\in L^2(\RR,\Delta)\mbox{ and }
\Omega\cdot\ph\in L^2(\RR,\Delta)\,\}\]
becomes a Hilbert space when endowed with the norm $||\,\ph\,||^2=|\ph|_{L^2}^2
+|\del_t\ph|_{L^2}^2+|\Omega\cdot\ph|_{L^2}^2$, and the inclusion
$X\to L^2(\RR,\Delta)$ is a compact operator.
\end{lm}
\begin{proof}
The first assertion is obvious. Let $(\ph_m)$ be a bounded sequence in $X$. We
prove that $(\ph_m)$ has a subsequence which is Cauchy in $L^2(\RR,\Delta)$. For
every $n\in\NN\setminus\{0\}$ there exists $r_n>0$ such that $|\Omega(t)|\ge n$
for $|t|\ge r_n$. We can assume $r_n<r_{n+1}$ and $r_n\to\infty$ for $n\to\infty$.
Using $|\Omega(t)\cdot\ph(t)|=|\Omega(t)|\,|\ph(t)|\ge n\,|\ph(t)|$, we obtain
\[\int_{|t|\ge r_n}|\ph(t)|^2\,dt\le\frac{1}{n^2}\int_{|t|\ge r_n}
|\Omega(t)\cdot\ph(t)|^2\,dt\le\frac{1}{n^2}\,|\,\Omega\cdot\ph\,|_{L^2}^2\]
for $\ph\in X$. For the moment, we fix the parameter $n$. Let $\chi\in C_0^\infty(\RR)$
be such that $0\le\chi\le 1$ and $\chi(t)=1$ for $|t|\le r_n$. By Rellich's theorem,
applied to the bounded sequence~$\chi\ph_m$ in~$H^1(\RR,\Delta)$, we conclude that
there exists a subsequence $(\ph_{m_{n,k}})$ such that
\[\int_{-r_n}^{r_n}|\ph_{m_{n,k}}(t)-\ph_{m_{n,l}}(t)|^2\,dt\to 0\]
for $k,l\to\infty$. Proceeding by induction, we establish $\{m_{n+1,k}:k\in\NN\}
\subset\{m_{n,k}:k\in\NN\}$ for all $n$. We define $m_k=m_{k,k}$. Now it is easy to
see that $(\ph_{m_k})$ is Cauchy w.r.t.\ the $L^2$-norm.\qed
\end{proof}

\begin{lm}\label{lm:dom_of_Dast}
The domain of $\bar{P}$ is contained in $X$ and the inclusion $\dom(\bar{P})\to X$
is continuous.
\end{lm}
\begin{proof}
Since $\dom(P)=\cS(\RR,\Delta)$ is contained in $X$ and dense in $\dom(\bar{P})$
w.r.t.\ the norm $|-|_P$, it suffices to prove that there exists $K>0$ such that
$||\,\ph\,||\le K\,|\,\ph\,|_{P}$ for all $\ph\in\cS(\RR,\Delta)$. Using
$a\cdot(\del_t\ph)=\del_t(a\cdot\ph)$ and $\Omega a=-a\Omega$
in $\Cl(H)$, we compute
\[\la\,a\cdot(\del_t\ph),i\Omega\cdot\ph\,\ra_{L^2}
=-\la\,a\cdot\ph,i\del_t(\Omega\cdot\ph)\,\ra_{L^2}\]
and
\[\la\,i\Omega\cdot\ph,a\cdot(\del_t\ph)\,\ra_{L^2}
=\la\,a\cdot\ph,i\Omega\cdot(\del_t\ph)\,\ra_{L^2}\;.\]
As $\del_t(\Omega\cdot\ph)=(\del_t\Omega)\cdot\ph+\Omega\cdot(\del_t\ph)$, it
follows
\[|\,P\ph\,|_{L^2}^2 = |a\cdot(\del_t\ph)+i\Omega\cdot\ph|_{L^2}^2
=|\,\del_t\ph\,|_{L^2}^2-\la\,a\cdot\ph,i(\del_t\Omega)\cdot\ph\,\ra_{L^2}
+|\,\Omega\cdot\ph\,|_{L^2}^2\]
for all Schwartz functions. Since $\Omega$ is a polynomial function, there exists
$r>0$ such that $|\,(\del_t\Omega)(t)\,|\le|\,\Omega(t)\,|$ for all $|t|\ge r$.
Fix $C>1$ such that $|\,(\del_t\Omega)(t)\,|\le C$ for $|t|\le r$. From
\begin{eqnarray*}
|\,\la\,a\cdot\ph,i(\del_t\Omega)\cdot\ph\,\ra_{L^2}\,| &\le&
\int_{-r}^r|\,(\del_t\Omega)(t)|\,|\ph(t)|^2\;dt
+\int_{|t|\ge r}|(\del_t\Omega)(t)|\,|\ph(t)|^2\;dt\\
&\le& C\int_{-r}^r|\ph(t)|^2\;dt+\int_{|t|\ge r}|\ph(t)|\,|\Omega(t)\cdot\ph(t)|\;dt\\
&\le& C\,|\,\ph\,|_{L^2}^2+|\,\ph\,|_{L^2}\,|\,\Omega\cdot\ph\,|_{L^2}
\end{eqnarray*}
it then follows
\begin{eqnarray*}
|\,\ph\,|_P^2 &=& |\,\ph\,|_{L^2}^2+|\,P\ph\,|_{L^2}^2\ge
\frac{1}{2C}\left(\,C\,|\,\ph\,|_{L^2}^2+\frac{1}{4}\,|\,\ph\,|_{L^2}^2
+|\,P\ph\,|_{L^2}^2\,\right)\\
&\ge& \frac{1}{2C}\left(\,\frac{1}{4}\,|\,\ph\,|_{L^2}^2+|\,\del_t\ph\,|_{L^2}^2
+|\,\Omega\cdot\ph\,|_{L^2}^2-|\,\ph\,|_{L^2}\,|\Omega\cdot\ph\,|_{L^2}\,\right)
\ge\frac{1}{2C}\,|\,\del_t\ph\,|_{L^2}^2
\end{eqnarray*}
for $\ph\in\cS(\RR,\Delta)$. Moreover, $i\Omega\cdot\ph=P\ph-a\cdot(\del_t\ph)$
gives
\[|\,\Omega\cdot\ph\,|_{L^2}\le |\,P\ph\,|_{L^2}+|\,\del_t\ph\,|_{L^2}
\le(1+\sqrt{2C})\,|\,\ph\,|_P\;.\]
Altogether, we obtain
\[||\,\ph\,||^2 = |\,\ph\,|_{L^2}^2+|\,\del_t\ph\,|_{L^2}^2+|\,\Omega\cdot\ph\,|_{L^2}^2
\le \left(\,1+2C+(1+\sqrt{2C})^2\,\right)|\,\ph\,|_P^2\]
proving the lemma.\qed
\end{proof}

Now we can prove the second assertion of Proposition~\ref{pr:discrete_spectrum}.
As $\sigma(\bar{P})\subset\RR$, we know that $\bar{P}-iI$ is bijective. Put
$R:=(\bar{P}-iI)^{-1}$. Since $\bar{P}-iI$ is continuous w.r.t.\ the complete
norm $|\,\cdot\,|_P$ on $\dom(\bar{P})$ and the $L^2$-norm, the open-mapping
theorem implies that $R:L^2(\RR,\Delta)\to\dom(\bar{P})$ is continuous. Moreover,
since the inclusion $\dom(\bar{P})\to X\to L^2(\RR,\Delta)$ is compact by
Lemma~\ref{lm:cp_emb} and~\ref{lm:dom_of_Dast}, it follows that $R$ is a
compact normal operator on $L^2(\RR,\Delta)$ with $\ker R=\{0\}$. By the
spectral theorem there exists an orthonormal basis $\{\ph_n:n\in\NN\}$
of $L^2(\RR,\Delta)$ with $R\ph_n=\mu_n\ph_n$ for suitable $0\neq\mu_n\in\CC$.
This implies $\bar{P}\ph_n=(\lambda+\frac{1}{\mu_n})\ph_n$ for all $n$. If
$\{\mu_n:n\in\NN\}$ happens to be an infinite set, then $\mu_n\to 0$ and
hence $|\lambda+\frac{1}{\mu_n}|\to\infty$ for $n\to\infty$. This proves
the spectrum of $\bar{P}$ to be discrete. \qed
\end{proof}

Now we resume the assumptions of Section~\ref{S3.1}: Let $A(t)=\exp(tB)$ be a
one-parameter group of $\GL(n,\RR)$ with $A(1)\in\SL(n,\ZZ)$. Suppose that $B$
does not possess any purely imaginary eigenvalues. Let $\fg$ denote the Lie algebra
of~$G$, and $\fn$ the Lie algebra of~$N=\RR^n\times\{0\}$. As before we identify
$\fg$ with the tangent space at the identity element $e=(0,0)$ of $G$ and denote
by $b$ the $(n+1)$th basis vector of $\fg\cong\RR^n\rtimes_B\RR$.

Let $\cH$ be a left-invariant oriented distribution on $G$ such that $b\in\cH_e$.
Suppose that $\cH$ carries a left-invariant Riemannian metric $g$ such that $b$ is
orthogonal to $\cH_e\cap\fn$ w.r.t.\ the inner product $\ip:=g_e$
on $\cH_e$. We assume that $\cH$ is bracket-generating. With $C^0(\cH_e)=\cH_e$
and $C^k(\cH_e)=[\cH_e,C^{k-1}(\cH_e)]$ for $k\ge 1$, this means
$\cH=\sum_{k=0}^n C^k(\cH_e)$. In particular, it follows
\begin{equation}\label{eq:weak_bg}
[\fg,\fg]=\sum_{k=1}^n C^k(\cH_e)\;.
\end{equation}
The latter condition is crucial for the proof of Theorem~\ref{theo:pure_point_spec}
but we do not claim that (\ref{eq:weak_bg}) has significance for left-invariant
distributions $\cH$ on Lie groups $G$ which do not have the form $G=\RR^n\rtimes_A\RR$.

Let $\nabla$ be a left-invariant metric connection on $\cH$ satisfying
condition~(\ref{Ediv}) of Lemma~\ref{lm:sym_of_D}, which guarantees the symmetry
of the sub-Dirac operator. Here the divergence in~(\ref{Ediv}) is defined w.r.t.\
the left-invariant volume form corresponding to the Haar measure of~$G$. If $\dot{\cH}$
is the distribution on~$\GG$ defined by~$\cH$, then the Riemannian metric on $\dot{\cH}$
induced by $g$ is denoted by $\dot{g}$, and the connection on $\dot{\cH}$ by $\dot{\nabla}$.
Let $\eps:\Gamma\to\ZZ_2$ be a homomorphism defining a spin structure
$P_{\Spin,\eps}(\dot{\cH})\cong G\times_\Gamma\Spin(d)$ of $(\dot{\cH},\dot{g})$,
where $d=\dim\cH$.

We fix a positively oriented orthonormal basis $s_1,\ldots,s_d$ of $\cH_e$ with
$s_1,\ldots,s_{d-1}\in\cH_e\cap\fn$ and such that $s_d$ is a positive multiple of~$b$.
Denoting the corresponding left-invariant vector fields again by $s_1,\ldots,s_d$,
the sub-Dirac operator, as acting on smooth sections of the spinor bundle
$S(\dot{\cH},\eps)$, is given by $D=\sum_i s_i\cdot\dot{\nabla}^S_{s_i}$.

Let $\Delta$ be the representation space of the complex spinor representation
of~$\Spin(\cH_e)$. Identifying $\Gamma(S(\dot{\cH},\eps))$ with
$C^\infty(G,\eps)\otimes\Delta$, we see that $D$ is given by
\begin{equation}\label{eq:expl_form_of_D}
D=\sum_i d\rho(s_i)\otimes s_i+\frac{1}{4}\sum_{i,j,k}\Gamma_{ij}^k\;
I\otimes s_is_js_k=:P+W
\end{equation}
where the $s_i$'s in the second factor of the tensor products are understood as
operators on~$\Delta$. Furthermore, the constants $\Gamma_{ij}^k=g(\nabla_{s_i}s_j,s_k)$
are the Christoffel symbols of~$\nabla$ w.r.t.\ the orthonormal frame $s_1,\ldots,s_d$
of $\cH$, and $d\rho$ is the derivative of the right regular representation $\rho$
of~$G$ on~$L^2(G,\eps)$. By~(\ref{Ediv}) the second sum in~(\ref{eq:expl_form_of_D})
reduces to a sum over all pairwise distinct indices $i,j,k$.

\begin{theo}\label{theo:pure_point_spec}
If, in addition to the above assumptions, $G=\RR^n\rtimes_A\RR$ is nilpotent,
then the closure of the operator $D$ on $\GG$ has a pure point spectrum.
\end{theo}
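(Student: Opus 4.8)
The plan is to combine the explicit decomposition of the right regular representation (Theorem~\ref{theo:decomp} and Proposition~\ref{pr:L2_decomp}) with the spectral analysis of the one-dimensional model operator in Proposition~\ref{pr:discrete_spectrum}. By~(\ref{eq:expl_form_of_D}) we have $D=P+W$, where $P=\sum_i d\rho(s_i)\otimes s_i$ is of first order and $W=\frac14\sum_{i,j,k}\Gamma_{ij}^k\,I\otimes s_is_js_k$ acts only on the finite-dimensional factor $\Delta$; thus $W$ is bounded, and since $D$ and $P$ are symmetric it is selfadjoint. Each $U_\theta$ is $\rho(G)$-invariant by~(\ref{eq:rho_xi}), so the subspaces $U_\theta\otimes\Delta$ are invariant under every $d\rho(s_i)\otimes s_i$ on smooth vectors, and $W$ preserves them as well. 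Hence $D$ respects the orthogonal decomposition of $L^2(G,\eps)\otimes\Delta$ indexed by the $\ZZ$-orbits in $\Sigma_{\eps'}$, and it suffices to analyse $D$ on each summand and then to reassemble.

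First I would dispose of the fixed-point summands. If $\theta=\{\xi\}$ with $\xi$ a fixed point, then $U_\theta\otimes\Delta$ splits into the pieces $\chi_{\dot\eps,\omega,m}\otimes\Delta$ of Theorem~\ref{theo:decomp}; on each of these $d\rho(s_i)$ is a scalar, so $D$ is a constant selfadjoint endomorphism of $\Delta$, i.e.\ a finite matrix, which trivially has pure point spectrum. For the remaining summands, let $\omega\in X_{\eps'}^\infty$ and $\theta\subset\omega\cap\Sigma_{\eps'}$, fix $\eta\in\omega$, and transport $D|_{U_\theta\otimes\Delta}$ to $L^2(\RR,\Delta)$ by the unitary $T_\eta\otimes I$ of Lemma~\ref{lm:rho_eta}. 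Since $s_d$ is a multiple of $b$, the operator $d\rho_\eta(s_d)$ is a nonzero real multiple of $\del_t$, while $d\rho_\eta(s_i)$ is multiplication by $\pi i\,\la A(t)^\top\eta,s_i\ra$ for $i<d$, by~(\ref{eq:rho_eta}). Hence $P$ becomes exactly $a\,\del_t+i\Omega(t)$ with $a$ a nonzero multiple of $s_d$ and
\[\Omega(t)=\pi\sum_{i=1}^{d-1}\la A(t)^\top\eta,s_i\ra\,s_i\ \in\ \Span\{s_1,\dots,s_{d-1}\},\]
which is precisely the model operator of Proposition~\ref{pr:discrete_spectrum}.

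The crux is to verify the hypotheses on $\Omega$. As $G$ is nilpotent, $B$ is nilpotent and $A(t)=\exp(tB)$ is polynomial in $t$, so $\Omega$ is a polynomial function. To see that it is non-constant I would argue by contradiction: if $\Omega$ were constant, then $\la\eta,A(t)s_i\ra$ would be constant in $t$, so differentiating at $t=0$ gives $\la\eta,B^ks_i\ra=0$ for all $k\ge1$ and all $i<d$. Because $s_1,\dots,s_{d-1}$ span $\cH_e\cap\fn$, because $\fn$ is abelian and $[b,\cdot]=B$, one computes $C^k(\cH_e)=B^k(\cH_e\cap\fn)$ for $k\ge1$; together with the bracket-generating identity~(\ref{eq:weak_bg}) this yields $\eta\perp\sum_{k\ge1}C^k(\cH_e)=[\fg,\fg]=\im B$, whence $B^\top\eta=0$ and $\eta$ is a fixed point of $t\mapsto A(t)^\top$. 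This contradicts $\omega\in X_{\eps'}^\infty$, so $\Omega$ is non-constant. Proposition~\ref{pr:discrete_spectrum} now gives that $\bar P$ is selfadjoint with discrete spectrum; as $W$ is bounded and selfadjoint, $\bar P+W$ has the same domain and still compact resolvent, hence discrete, and therefore pure point, spectrum. Lemma~\ref{lm:range_of_T_eta} is what legitimises this transfer: it ensures that the Schwartz core $\cS(\RR,\Delta)$ underlying Proposition~\ref{pr:discrete_spectrum} corresponds under $T_\eta$ to genuine smooth sections lying in $\dom(D)$.

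The final and most delicate step is the functional-analytic gluing. Here $D$ is symmetric by the choice of the connection (Lemma~\ref{lm:sym_of_D}), and I would take as a common core the algebraic direct sum of the per-summand cores: the full spaces $\chi_{\dot\eps,\omega,m}\otimes\Delta$ in the character case and $T_\eta^{-1}(\cS(\RR,\Delta))$ in the continuous case, all of which consist of smooth sections in $\dom(D)$. Invoking the standard criterion that a symmetric operator leaving invariant a family of mutually orthogonal subspaces, the union of whose cores is dense, is essentially selfadjoint with closure the orthogonal sum of the (already essentially selfadjoint) restrictions, I would conclude that $\bar D$ is the orthogonal direct sum of the selfadjoint pieces found above. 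Choosing an orthonormal eigenbasis of $L^2(\RR,\Delta)$ for each continuous summand and of $\Delta$ for each character then assembles to an orthonormal basis of $L^2(G,\eps)\otimes\Delta$ consisting of eigenfunctions of $\bar D$, which is the assertion. I expect the main obstacle to be precisely this last identification, namely confirming that the countable orthogonal sum of essentially selfadjoint pieces reproduces $\bar D$ itself rather than a strictly larger selfadjoint extension, for which the density of smooth sections and, crucially, Lemma~\ref{lm:range_of_T_eta} are the key inputs.
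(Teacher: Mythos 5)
Your proposal is correct and follows essentially the same route as the paper: decompose $L^2(G,\eps)\otimes\Delta$ into the $U_\theta\otimes\Delta$, dispose of the fixed-point summands by finite-dimensionality, transport the remaining summands to $L^2(\RR,\Delta)$ via $T_\eta$ so that $P$ becomes the model operator of Proposition~\ref{pr:discrete_spectrum} with $\Omega$ non-constant by~(\ref{eq:weak_bg}), and treat $W$ as a bounded perturbation (the paper phrases this via Kato--Rellich and Weyl's theorem on the essential spectrum rather than preservation of compact resolvent, but the two arguments are equivalent here). Your extra care about the gluing of the essentially selfadjoint pieces, with Lemma~\ref{lm:range_of_T_eta} supplying the cores, is a sound elaboration of a point the paper leaves implicit.
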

\begin{proof}
By Proposition~\ref{pr:L2_decomp} we know that $L^2(G,\eps)\otimes\Delta$ is a
direct sum of the orthogonal subspaces $\{U_\theta\otimes\Delta:\theta\in\ZZ\setminus
\Sigma_{\eps'}\}$ which are invariant under the action of~$\rho(G)\otimes\Cl(\cH_e)$.
Let $U_\theta^\infty:=U_\theta\cap C^\infty(G,\eps)$. Then $U_\theta^\infty\otimes\Delta$
is $D$-invariant. To prove that $\bar{D}$ has a pure point spectrum, it suffices to
prove that the closure of~$D_\theta:=D\,|\,U_\theta^\infty\otimes\Delta$ has a
pure point spectrum for all $\theta$. If $\theta=\{\xi\}$ is a fixed point, then,
according to Equation~(\ref{eq:rho_xi_for_fixed_points}), the subspace $U_\theta$
is an orthogonal sum of one-dimensional $\rho(G)$-invariant subspaces of~$U_\theta^\infty$.
Thus $U_\theta\otimes\Delta$ is an orthogonal sum of two-dimensional $D_\theta$-invariant
subspaces of $\dom(D_\theta)$ which shows that $D_\theta$ has a pure point
spectrum. Thus we are left with the case where $\theta$ is an infinite set
and $U_\theta$ is isomorphic to~$L^2(\RR)$. Fix $\xi\in\theta$. Lemma~\ref{lm:rho_eta}
implies that there exists a unitary isomorphism $T_\xi:U_\theta\to L^2(\RR)$ such
that $\rho_\xi:=T_\xi\,\rho_\theta\,T_\xi^\ast$ is given by
\begin{equation}\label{eq:rho_xi_again}
\rho_\xi(x,s)\psi\;(t)=e^{\pi i\la A(t)^\top\xi,x\ra}\;\psi(t+s).
\end{equation}
By Lemma~\ref{lm:range_of_T_eta} we may define $D_\xi=(T_\xi\otimes I)\,D_\theta\,
(T_\xi^\ast\otimes I)\,|\,\cS(\RR,\Delta)$. Since $d\rho_\xi(s_j)=\pi i\,\la
A(t)^\top\xi,s_j\ra$ for $1\le j\le d-1$ and $d\rho_\xi(s_d)=|\,b\,|\,\del_t$,
it follows that the operator $P_\xi:=\sum_{j=1}^d d\rho_\xi(s_j)\otimes s_j$
on $\cS(\RR,\Delta)$ has the form
\begin{equation}\label{eq:D_xi}
P_\xi=a\del_t+i\Omega_\xi(t)
\end{equation}
with $a=|\,b\,|^{-1}s_d$ and $\Omega_\xi(t)=\pi\,\sum_{j=1}^{d-1}\,\la
A(t)^\top\xi,s_j\ra\,s_j\in\cH_e\subset\Cl(\cH_e)$. Note that
$\la\,A(t)^\top\xi,s_j\,\ra=\la\xi,\exp(tB)s_j\,\ra$ is a polynomial
in $t$ because $B$ is nilpotent. Since
\[ [\fg,\fg]=\sum_{k=1}^nB^k(\cH_e\cap\fn)
=\sum_{k=1}^n\sum_{j=1}^{d-1}\,\RR\cdot(B^ks_j)\]
by~(\ref{eq:weak_bg}) and $\la\,\xi,[\fg,\fg]\,\ra\neq 0$ for non-fixed
points, it follows that at least one of the components of $\Omega_\xi$ is not
constant. Thus Proposition~\ref{pr:discrete_spectrum} implies that $\bar{P}_\xi$
has discrete spectrum. In other words, the essential spectrum of $\bar{P}_\xi$
is empty.

The operator $W_\xi:=\frac{1}{4}\sum_{i,j,k}\Gamma_{ij}^k\,s_is_js_k$
is bounded on $L^2(\RR,\Delta)$. In particular, $W_\xi$ is relatively
$\bar{P}_{\xi}$-compact in the sense that $\dom(\bar{P}_\xi)\subset\dom(W_\xi)$
and $W_\xi(\bar{P}_\xi-iI)^{-1}$ is compact. By the Kato-Rellich theorem we
know that $\bar{D}_\xi=\bar{P}_\xi+W_\xi$ is selfadjoint. Moreover, Weyl's
theorem which asserts the stability of the essential spectrum under relatively
compact perturbations, and for which we refer to Theorem~14.6 of~\cite{HS},
implies that the essential spectrum of $\bar{D}_\xi=\bar{P}_\xi+W_\xi$ is empty.
This shows that $\bar{D}_\xi$ and hence $\bar{D}_\theta$ have discrete spectrum.
The proof of the theorem is complete.\qed
\end{proof}

\begin{re}
In general, the eigenvalues of the sub-Dirac operator $D$ do not have finite
multiplicity and the spectrum of $D$ is not a discrete subset of $\RR$. A relevant
example is given in Section~\ref{S4.3}.
\end{re}

\subsection{Two- and three-dimensional distributions}

In this subsection we will compute the spectra of the operators $D_\theta$
arising in the proof of Theorem~\ref{theo:pure_point_spec} provided that
$G=\RR^n\rtimes_A\RR$ is 2-step nilpotent and $\dim\cH=2$ or~$3$. The explicit
formulas that will be given below in the non-fixed point case are a consequence
of the following result.

\begin{pr}\label{pr:spectrum_of_D}
Let $\alpha,\beta\in\RR$ and $\omega(t)=a\omega_1t+\omega_0$ where $a>0$,
$\omega_0,\omega_1\in\CC$ and $|\omega_1|=1$. Then the spectrum of the operator
\begin{equation}\label{eq:concrete_D}
D=\alpha I+\beta\,\left(\begin{array}{cc} i\del_t & \bar{\omega} \\ \omega & -i\del_t
\end{array}\right)
\end{equation}
on $\cS(\RR,\CC^2)$ is discrete. More precisely, $\sigma(D)=\{\,\lambda_0\,\}\cup
\{\,\lambda_{k}^{\pm}:k\in\NN\setminus\{0\}\,\}$, where
\[\lambda_0:=\alpha+\beta\Im(\omega_0\bar{\omega}_1)\quad{and}\quad
\lambda_k^{\pm}:=\alpha\pm\beta\left(2ak+\Im(\omega_0\bar{\omega}_1)^2\right)^{1/2}.\]
If $\lambda_0$ and the $\lambda_k^\pm$ are pairwise distinct, then all eigenvalues
are simple.
\end{pr}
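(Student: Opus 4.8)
The plan is to reduce the operator $D$ to an explicitly solvable harmonic-oscillator problem. First I would dispose of the trivial scalar and subtract the identity: since $\alpha I$ contributes a constant shift to every eigenvalue, it suffices to analyze $D_0:=\beta^{-1}(D-\alpha I)$, whose eigenvalues I then multiply by $\beta$ and shift by $\alpha$ to recover $\sigma(D)$. Next I would normalize the off-diagonal function. Writing $\omega(t)=a\omega_1 t+\omega_0$ with $|\omega_1|=1$, I substitute $\omega_0=\omega_0'+i\,\Im(\omega_0\bar\omega_1)\omega_1$ where $\omega_0'$ is the component of $\omega_0$ parallel to $\omega_1$; a constant real translation $t\mapsto t-c$ absorbs $\omega_0'$ into the linear term, so after this shift the off-diagonal entry has the form $\omega_1(at+ic)$ for the real constant $c=\Im(\omega_0\bar\omega_1)$. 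The phase $\omega_1$ can then be removed by conjugating with the unitary $\mathrm{diag}(1,\bar\omega_1)$, leaving me with the model operator whose off-diagonal entries are $at\mp ic$.

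\textbf{Squaring to a harmonic oscillator.}

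The key step is to compute $D_0^2$. Because $D_0$ has the form $\left(\begin{smallmatrix} i\del_t & \bar u \\ u & -i\del_t\end{smallmatrix}\right)$ with $u(t)=at+ic$, a direct calculation gives a diagonal operator whose two entries are $-\del_t^2+|u|^2\pm u'$, that is $-\del_t^2+a^2t^2+c^2\pm a$ (the sign coming from the commutator $[\del_t,u]$ acting through the off-diagonal structure). Each diagonal entry is a standard quantum harmonic oscillator $-\del_t^2+a^2t^2+\text{const}$, whose spectrum on $\cS(\RR)$ is $\{a(2n+1):n\in\NN\}$ shifted by the constant, with simple eigenvalues given by Hermite functions. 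Thus $\sigma(D_0^2)$ consists of the values $2ak+c^2$ for $k\in\NN$ (the $+a$ and $-a$ offsets interleave the two oscillator ladders, and $k=0$ gives the single bottom value $c^2$). Since $D_0$ is essentially self-adjoint by Proposition~\ref{pr:discrete_spectrum} applied with $\Omega(t)$ linear, its spectrum is real and determined by $\sigma(D_0)=\{\pm\sqrt{\mu}:\mu\in\sigma(D_0^2)\}$, subject to checking which square roots are actually attained.

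\textbf{Extracting the eigenvalues and multiplicities.}

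The remaining work is bookkeeping on signs. The lowest oscillator level contributes the single value $c^2$; I must verify it corresponds to $\lambda_0=\Im(\omega_0\bar\omega_1)=c$ (not $-c$) by exhibiting the ground-state eigenvector explicitly—the Gaussian $e^{-at^2/2}$ placed in the correct component is annihilated by one of the two factors of $D_0$, pinning down the sign. For $k\ge 1$ both $\pm\sqrt{2ak+c^2}$ occur, giving $\lambda_k^\pm=\pm(2ak+c^2)^{1/2}$ in the normalized picture; reinstating $\alpha$ and $\beta$ yields exactly the claimed $\lambda_0=\alpha+\beta\Im(\omega_0\bar\omega_1)$ and $\lambda_k^\pm=\alpha\pm\beta(2ak+\Im(\omega_0\bar\omega_1)^2)^{1/2}$. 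Simplicity follows because each oscillator eigenvalue is simple and the two ladders contribute to distinct eigenspaces of $D_0$ once the $\pm\sqrt{\cdot}$ are distinct; the final clause of the proposition is then immediate.

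\textbf{Main obstacle.}

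The hard part is not the oscillator computation but the careful tracking of the additive constant $\Im(\omega_0\bar\omega_1)$ through the two reductions—the real translation and the unitary conjugation must be shown to be genuine unitary equivalences on the relevant domain, and I must confirm that the $\pm a$ offsets from the two diagonal entries of $D_0^2$ combine to give the single bottom eigenvalue $\lambda_0$ with the correct sign rather than a spurious pair $\pm|c|$. Fixing the sign of $\lambda_0$ via the explicit Gaussian ground state is the delicate point where the structure of the Clifford/matrix factor genuinely enters.
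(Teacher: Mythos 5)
Your overall route is essentially the paper's: after stripping off $\alpha$ and $\beta$, a constant unitary conjugation reduces everything to a harmonic oscillator acting through Hermite functions. The paper also conjugates by a constant matrix $Q$ chosen to diagonalize $D^2$, but then works directly with the \emph{first-order} operator $S=Q^*DQ$: it exhibits explicit $S$-invariant subspaces $V_0$ (one-dimensional) and $V_k$ (two-dimensional, spanned by shifted Hermite functions $u_{k-1},u_k$ in the two components) and reads the eigenvalues off the resulting $2\times 2$ matrices, so no square roots ever have to be extracted. Your variant squares first, and that is where the one real gap sits.

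Two points need attention. First, a minor one: with $u(t)=at+ic$ the square $D_0^2$ is \emph{not} diagonal --- the commutators produce constant off-diagonal entries $\pm ia$, so $D_0^2=\left(\begin{smallmatrix}-\del_t^2+|u|^2 & ia\\ -ia & -\del_t^2+|u|^2\end{smallmatrix}\right)$, and you need one further constant unitary to reach $\mathrm{diag}(-\del_t^2+a^2t^2+c^2\pm a)$. This is harmless but should be stated. Second, and more seriously: for $k\ge 1$ you assert that both signs $\pm\sqrt{2ak+c^2}$ occur, but this does not follow from knowing $\sigma(D_0^2)$. The eigenspace $E_\mu$ of $D_0^2$ for $\mu=2ak+c^2$ is two-dimensional and $D_0$-invariant, and a priori $D_0|_{E_\mu}$ could equal $+\sqrt{\mu}\,I$ or $-\sqrt{\mu}\,I$, which would kill one eigenvalue and double the multiplicity of the other --- exactly the situation your final ``simplicity'' claim assumes away. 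You need an extra input, e.g.\ that $\operatorname{tr}\bigl(D_0|_{E_\mu}\bigr)=0$ (visible because the two basis vectors of $E_\mu$ sit in different components and the diagonal part of the conjugated $D_0$ contributes $-c$ and $+c$ respectively), or simply the paper's explicit computation of the restriction as $\left(\begin{smallmatrix}-c & \sqrt{2ak}\,\bar\omega_1\\ \sqrt{2ak}\,\omega_1 & c\end{smallmatrix}\right)$, whose eigenvalues are $\pm\sqrt{2ak+c^2}$. Your treatment of $\lambda_0$ via the Gaussian ground state is fine and does pin down the sign $+c$ correctly; the same level of care is needed for $k\ge 1$. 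With that supplied (and the completeness of the Hermite basis invoked to see that these invariant subspaces exhaust $L^2(\RR,\CC^2)$), the argument closes.
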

\begin{proof}
We can assume $\alpha=0$ and $\beta=1$. Instead of $D$, we consider the operator
$S:=Q^*DQ$, where $Q$ is the unitary matrix
\[Q=\frac{1}{\sqrt{2}}\;\left(\begin{array}{cc} 1 & -i\bar{\omega}_1 \\
 -i\omega_1 & 1 \end{array}\right),\]
which diagonalizes $D^2$ and does not depend on $t$. Obviously the spectra
of $D$ and $S$ coincide. We have
\begin{eqnarray*}
S &=& \left(\begin{array}{cc} \Im(\omega_1\bar{\omega}) &
\bar{\omega}_1 \left(\del_t+\Re(\omega_1\bar{\omega})\,\right)\; \\
\;\omega_1\left(-\del_t+ \Re(\omega_1\bar{\omega})\,\right) & 
-\Im(\omega_1\bar{\omega})\end{array}\right)\\[0.4cm]
&=& \left(\begin{array}{cc} -\Im(\omega_0\bar{\omega}_1) &
\bar{\omega}_1\left(\,\del_t+a\,t+\Re(\omega_0\bar{\omega}_1)\,\right)\; \\
 \;\omega_1\left(\,-\del_t+a\,t+\Re(\omega_0\bar{\omega}_1)\right) &
\Im(\omega_0\bar{\omega}_1)\end{array}\right).
\end{eqnarray*}
To detect $S$-invariant subspaces, we start with the orthonormal basis
$\{h_k:k\in\NN\}$ of~$L^2(\RR)$ given by the (normalized) Hermite functions
\[h_k(x)=(\,2^k\,\pi^{1/2}\,k!\,)^{-1/2}\,H_k(x)\,e^{-x^2/2}.\]
Here $H_k(x)=(-1)^k\,e^{x^2}\del_x^k[e^{-x^2}]$ is the $k$th Hermite polynomial.
Put $b=2a\Re(\omega_0\bar\omega_1)$. Using the unitary isomorphism
\[(U\cdot w)(t):=a^{1/4}\,w(\,a^{1/2}(t+\frac{b}{2a^2})\,)\]
of $L^2(\RR)$, we then define $u_k=U\cdot h_k$. Recall that the creation
operator $\Lambda_+=-\del_x+x$ and the annihilation operator $\Lambda_-=\del_x+x$
satisfy $\Lambda_+(h_k)=\sqrt{2(k+1)}\,h_{k+1}$ and $\Lambda_-( h_k)=\sqrt{2k}\,h_{k-1}$.
Since
\[U\Lambda_\pm U^\ast=a^{-1/2}\,\left(\mp \del_t+a\,t+\frac{b}{2a}\right),\]
we thus obtain
\begin{eqnarray*}
\left(-\del_t+a\,t+\frac{b}{2a}\right)u_k 
&=& \sqrt{2a(k+1)}\;u_{k+1}\mbox{ for }k\ge 0,\\
\left(\del_t+a\,t+\frac{b}{2a}\right)u_k
&=& \sqrt{2ak}\;u_{k-1}\mbox{  for }k\ge 1,\\
\left(\del_t+a\,t+\frac{b}{2a}\right)u_0 &=& 0.
\end{eqnarray*}
This shows 
\begin{eqnarray*}
S\cdot\left(\begin{array}{c} 0 \\ u_0 \end{array}\right)
&=& \left(\begin{array}{c} \bar{\omega}_1\left(\del_t+a\,t
+\Re(\omega_0\bar{\omega}_1)\right)u_0 \\
\Im(\omega_0\bar{\omega}_1)\,u_0 \end{array}\right)
\ =\ \Im(\omega_0\bar{\omega}_1) \left(\begin{array}{c} 0 \\ u_0
\end{array}\right),\\[1ex]
S\cdot\left(\begin{array}{c} u_{k-1} \\ 0 \end{array}\right)
&=& \left(\begin{array}{cc} -\Im(\omega_0\bar{\omega}_1)\,u_{k-1}\\
\omega_1\left(-\del_t+a\,t+\Re(\omega_0\bar{\omega}_1)\right)u_{k-1}\end{array}\right)
\ =\ \left(\begin{array}{c} -\Im(\omega_0\bar{\omega}_1)\;u_{k-1} \\ \sqrt{2ak}\;
\omega_1\;u_{k}\end{array}\right),\\[1ex]
S\cdot\left(\begin{array}{c} 0 \\ u_k \end{array}\right)
&=& \left(\begin{array}{c} \bar{\omega}_1\left(\del_t+a\,t+\Re(\omega_0\bar{\omega}_1)
\right)u_k \\
\Im(\omega_0\bar{\omega}_1)\,u_k \end{array}\right)\ =\ \left(\begin{array}{c}
\sqrt{2ak}\;\bar{\omega}_1\;u_{k-1}\\ \Im(\omega_0\bar{\omega}_1)\;u_k \end{array}\right).
\end{eqnarray*}
In particular, the subspaces 
\[V_0:=\CC\left(\begin{array}{c} 0 \\ u_0\end{array}\right)\quad\mbox{and}\quad
V_k:=\Span\left\{\ph_k:=\left(\begin{array}{c} u_{k-1} \\ 0\end{array}\right),
\psi_k:=\left(\begin{array}{c} 0 \\ u_k\end{array}\right)\right\},\ k\ge1,\]
are $S$-invariant, and the restriction of $S$ to $V_k$, $k\ge 1$, is
given by the matrix 
$$\left(\begin{array}{cc} -\Im(\omega_0\bar \omega_1) & \sqrt{2ak}\,\bar\omega_1 \\ 
\sqrt{2ak}\,\omega_1 & \Im(\omega_0\bar\omega_1) \end{array}\right)$$
with respect to the basis $\ph_k,\psi_k$. Since $L^2(\RR,\CC^2)$ is the direct
sum of the $V_k$, $k\ge0$, the assertion follows.\qed
\end{proof}

Assume that $G=\RR^n\rtimes_A\RR$ is $2$-step nilpotent. Let $(\cH,g,\nabla)$ be
as in the preceding subsection with $2\le\dim\cH\le 3$. First we will determine
the spectrum of $D_\theta:=D\,|\,U_\theta^\infty\otimes\Delta$ when $\theta$
does not consist of a single point.

Suppose that $\dim\cH=2$. Let $s_1\in\cH_e\cap\fn$ and $s_2\in\cH_e$ be a positive
multiple of $b$ such that $s_1,s_2$ is a positively oriented orthonormal basis
of~$\cH_e$. In particular, $s_2=|\,b\,|^{-1}\,b$. By (\ref{Ediv}) we have
$\Gamma_{11}^1+\Gamma_{21}^2=0$ and $\Gamma_{12}^1+\Gamma_{22}^2=0$ which
implies that all Christoffel symbols of $\nabla$ vanish. Up to isomorphism, there
exists only one simple $\Cl(\cH_e)$-module. Let $\Delta=\CC^2$ be the one such
that $s_1$ and~$s_2$, represented as operators on $\Delta$, are given by
\[s_1=\left(\begin{array}{cc} 0 & -1 \\ 1 & 0 \end{array}\right)\quad\mbox{and}
\quad s_2=\left(\begin{array}{cc} i & 0 \\ 0 & -i \end{array}\right)\;.\]
Let $\theta\in\ZZ\setminus\Sigma_{\eps'}$ and $\xi\in\theta$ be a non-fixed
point. If $T_\xi:U_\theta\to L^2(\RR)$ is a unitary isomorphim as
in the proof of Theorem~\ref{theo:pure_point_spec} and
$D_\xi=(T_\xi\otimes I)D_\theta(T_\xi^\ast\otimes I)\,|\,\cS(\RR,\Delta)$,
then we know by~(\ref{eq:D_xi}) that $D_\xi$ has the form
\[D_\xi=|b|^{-1}\left(\begin{array}{cc} i\del_t & \bar{\omega}_\xi \\
\omega_\xi & -i\del_t\end{array}\right) \]
with $\omega_\xi(t)=\pi i\,|\,b\,|\,\la\,A(t)^\top\xi,s_1\,\ra$. Since $B^2=0$,
we have $A(t)=I+tB$ so that
\[\omega_\xi(t)=\pi i\,|\,b\,|\left(\,\la\,B^\top\xi,s_1\,\ra\,t
+\la\,\xi,s_1\,\ra\,\right)\]
is a non-constant affine-linear function. Thus Proposition~\ref{pr:spectrum_of_D}
implies that $\bar{D}_\xi$ has discrete spectrum. Moreover, the eigenvalues
of $D_\xi$ can be computed as follows: Put $\alpha=0$, $\beta=|\,b\,|^{-1}$,
$a=\pi\,|\,b\,|\,|\,\la B^\top\xi,s_1\ra\,|$, $\omega_1=i\sgn\la B^\top\xi,s_1\ra$
and $\omega_0=\pi i\,\la\xi,s_1\ra$. Note that $\Im(\omega_0\bar{\omega}_1)=0$.
Hence it follows that 
\begin{equation}\label{eq:lambda_for_2dim_H}
\lambda_0(\xi)=0\qquad\mbox{and}\qquad\lambda_k^\pm(\xi)
=\pm\,\left(\,2\pi\,|\,b\,|^{-1}\,|\la\,B^\top\xi,s_1\,\ra|\,k\right)^{1/2}
\end{equation}
with $k\in\NN\setminus\{0\}$ are the eigenvalues of $D_\xi$. This completes the
case $\dim\cH=2$.\\\\
Suppose that $\dim\cH=3$. Choose $s_1,s_2\in\cH_e\cap\fn$ and $s_3=|\,b\,|^{-1}b$
such that $s_1,s_2,s_3$ becomes a positively oriented orthonormal basis of $\cH_e$.
Up to isomorphism, there exist two simple $\Cl(\cH_e)$-modules. Let $\Delta=\CC^2$
be the one given by the representation
\[s_1=\left(\begin{array}{cc} 0 & i \\ i & 0 \end{array}\right)\quad,\quad
s_2=\left(\begin{array}{cc} 0 & -1 \\ 1 & 0 \end{array}\right)\quad,\quad
s_3=\left(\begin{array}{cc} i & 0 \\ 0 & -i \end{array}\right).\]
Note that $s_1s_2=s_3$ and $s_is_j+s_js_i=-2\delta_{ij}$ for all $1\le i,j\le 3$,
as operators on $\Delta$. Using this and that $\nabla$ is metric, we conclude that
the second sum in~(\ref{eq:expl_form_of_D}) simplifies to
$W=-\frac{1}{2}(\Gamma_{12}^3+\Gamma_{23}^1+\Gamma_{31}^2)\,I\otimes I$.

Let $\theta\in\ZZ\setminus\Sigma_{\eps'}$ and $\xi\in\theta$ be a non-fixed point.
If $T_\xi$ is a unitary isomorphism of $U_\theta$ onto $L^2(\RR)$ such that
$\rho_\xi=T_\xi\,\rho_\theta\,T_\xi^\ast$ is given by Equation~(\ref{eq:rho_xi}),
then the restriction $D_\xi$ of $(T_\xi\otimes I)\,D_\theta\,(T_\xi^\ast\otimes I)$,
when realized in $L^2(\RR,\Delta)$, to Schwartz functions has the form
\[D_\xi=-\frac{1}{2}(\Gamma_{12}^3+\Gamma_{23}^1+\Gamma_{31}^2)\,I+
|b|^{-1}\left(\begin{array}{cc} i\del_t & \bar{\omega}_\xi \\
\omega_\xi & -i\del_t\end{array}\right) \]
where $\omega_\xi$, as $G$ is $2$-step nilpotent, is a non-constant affine linear
function given by
\begin{align*}
\begin{split}
\omega_\xi(t) &= \pi i\,|\,b\,|\left(\,i\la\,A(t)^\top\xi,s_1\,\ra\,
+\la\,A(t)^\top\xi,s_2\,\ra\,\right)\\
&= -\pi\,|\,b\,|\left(\,\big(\la\,B^\top\xi,s_1\,\ra\,-i\la\,B^\top\xi,s_2\,\ra\,\big)\,t
+\la\,\xi,s_1\,\ra\,-i\la\,\xi,s_2,\ra\,\right)\,.
\end{split}
\end{align*}
Hence Proposition~\ref{pr:spectrum_of_D} implies that the eigenvalues of $D_\xi$
are
\begin{equation}\label{eq:lambda_0_for_3dim_H}
\lambda_0(\xi)=-\frac{1}{2}(\Gamma_{12}^3+\Gamma_{23}^1+\Gamma_{31}^2)
-\pi\;\frac{\la B^\top\xi,s_1\ra\la\xi,s_2\ra-\la\xi,s_1\ra\la B^\top\xi,s_2\ra}
{\left(\,\la B^\top\xi,s_1\ra^2+\la B^\top\xi,s_2\ra^2\,\right)^{1/2}}
\end{equation}
and
\begin{multline}\label{eq:lambda_k_for_3dim_H}
\lambda_k^{\pm}(\xi) = -\frac{1}{2}(\Gamma_{12}^3+\Gamma_{23}^1
+\Gamma_{31}^2)\pm\bigg(\,2\pi k\,|\,b\,|^{-1}\left(\,\la B^\top\xi,
s_1\ra^2+\la B^\top\xi,s_2\ra^2\,\right)^{1/2}\\
+\pi^2\;\frac{\left(\,\la B^\top\xi,s_1\ra\la\xi,s_2\ra
-\la\xi,s_1\ra\la B^\top\xi,s_2\ra\,\right)^2}{\la B^\top\xi,s_1\ra^2
+\la B^\top\xi,s_2\ra^2}\;\bigg)^{1/2}.
\end{multline}

In~(\ref{eq:lambda_for_2dim_H}) - (\ref{eq:lambda_k_for_3dim_H}) we rediscover
the fact that the eigenvalues $\lambda_k(\xi)$ do not depend on the choice
of the point $\xi$ on the orbit. More precisely, since $B^2=0$ and $A(t)B=B$,
it follows, in accordance with Lemma~\ref{lm:rho_eta}, that
$\lambda_k^\pm(A(t)^\top\xi)=\lambda_k^\pm(\xi)$ for all $t\in\RR$ and
$\xi\in\RR^n\setminus[\fg,\fg]^\bot$. This completes the case $\dim\cH=3$.\\\\
Finally we compute the spectrum of $D_\theta$ when $\theta=\{\xi\}$ is a
fixed point. For this purpose, we can drop the assumption that $G$ is
nilpotent.

By~(\ref{eq:rho_xi_for_fixed_points}) we know that $U_\theta$ is an
orthogonal sum of 1-dimensional subspaces $\{U_{\theta,k}:k\in\ZZ\}$
of~$U_\theta\cap C^\infty(G,\eps)$ on which $(x,s)\in G$ acts by
multiplication with
\[\chi_{\eps^\sharp,\theta,k}(x,s)=e^{\pi i\la\xi,x\ra}
e^{\pi i(2k+\dot{\eps}(1))s}\;.\]
Suppose that $\dim\cH=2$. Let $s_1,s_2$ and $\Delta$ be as above. In this case
the sub-Dirac operator reads $D=d\rho(s_1)\otimes s_1+d\rho(s_2)\otimes s_2$.
Since $d\chi_{\eps^\sharp,\theta,k}(s_1)=\pi i\,\la\,\xi,s_1\,\ra$ and
$d\chi_{\eps^\sharp,\theta,k}(s_2)=\pi i\,|\,b\,|^{-1}\,(2k+\dot{\eps}(1))$,
it follows that $D_{\theta,k}:=D_\theta\,|\,U_{\theta,k}\otimes\CC^2$ is
unitarily equivalent to
\begin{equation}\label{eq:D_theta_k}
D_{\xi,k}:=\alpha I+\beta\left(\begin{array}{cc} 2k+\dot{\eps}(1) &
\bar{\omega}_\xi \\ \omega_\xi & -2k-\dot{\eps}(1) \end{array} \right)
\end{equation}
on $\CC^2$, where $\alpha=0$, $\beta=-\pi\,|\,b\,|^{-1}$ and
$\omega_\xi=-i\,|\,b\,|\,\la\,\xi,s_1\,\ra$ are constants. Obviously,
$D_{\xi,k}$ admits the eigenvalues
\begin{equation}\label{eq:mu_k_for_2dim_H}
\mu_k^\pm(\xi)=\pm\,\pi\,\left(\,|\,b\,|^{-2}(2k+\dot{\eps}(1))^2
+\la\xi,s_1\ra^2\,\right)^{1/2}\;,\;k\in\ZZ\,.
\end{equation}
Suppose that $\dim\cH=3$. Let $s_1,s_2,s_3$ and $\Delta$ be as above. In this
case $D=P+W$ where $W=\alpha\,I\otimes I$ and $\alpha=-\frac{1}{2}(\Gamma_{12}^3
+\Gamma_{23}^1+\Gamma_{31}^2)$. Hence $D_{\theta,k}$ is unitarily equivalent
to $D_{\xi,k}$ as in~(\ref{eq:D_theta_k}) with $\beta=-\pi\,|\,b\,|^{-1}$
and $\omega_{\xi}=|\,b\,|\,(\,\la\,\xi,s_1\,\ra-i\la\,\xi,s_2\,\ra\,)$. Thus
$D_{\xi,k}$ has the eigenvalues
\begin{equation}\label{eq:mu_k_for_3dim_H}
\mu_k^\pm(\xi)=-\frac{1}{2}(\Gamma_{12}^3+\Gamma_{23}^1+\Gamma_{31}^2)
\,\pm\,\pi\,\left(\,|\,b\,|^{-2}(2k+\dot{\eps}(1))^2+\la\xi,s_1\ra^2
+\la\xi,s_2\ra^2\,\right)^{1/2}.
\end{equation}
This shows that $D_\theta$ has discrete spectrum in the fixed point case.

\section{Examples of spectra of sub-Dirac operators}

\subsection{A preliminary remark}
To compute the spectrum of the sub-Dirac operator $D$, it remains, by the
results in the preceding section for the spectra of the $D_\theta$, to
determine a set of representatives for the set of all $\ZZ$-orbits
contained in $\Sigma_{\eps'}$. More precisely, in view of
Theorem~\ref{theo:decomp}, we carry out the following steps:
\begin{enumerate}
\item Describe all homomorphisms $\eps:\Gamma\to\ZZ_2$.
\item Find a set of representatives $\cR_{\eps'}$ for all $\RR$-orbits $\omega$
intersecting $\Sigma_{\eps'}$.
\item Compute the number of $\ZZ$-orbits contained in $\omega\cap\Sigma_{\eps'}$.
\item Determine the spectrum of $D_\xi$ for some $\xi\in\omega$.
\end{enumerate}   
This requires a detailed knowledge of the orbit picture of the coadjoint representation.
In the following examples, the eigenvalues of the sub-Dirac operator including their
multiplicities will be determined completely.

\subsection{Three-dimensional Heisenberg manifolds}\label{S4.2}

As we will see next, the results of this section comprise Theorem~3.1
of~{\rm \cite{AB}} concerning the spectrum of the Dirac operator on
three-dimensional Heisenberg manifolds.

Let $G=\RR^2\rtimes_A\RR$ be the Heisenberg group as discussed
in Example~\ref{ex:3_dim_Heis} and $\Gamma=\ZZ^2\rtimes_A\ZZ$. Then
$\fg=\Span\{e_1,e_2,b\}$ with $[b,e_2]=re_1$.
For positive real numbers $d$ and~$T$ we consider the orientation and
the Riemannian metric $g$ on $\cH:=TG$ such that $s_1=\frac{1}{T}e_1$,
$s_2=-de_2$ and $s_3=\frac{d}{r}b$ becomes a positively oriented orthonormal
frame. The constants are chosen in accordance with~\cite{AB}, where the
collapse of Heisenberg manifolds $M(r,d,T)$ for $T\to 0$ is studied. Let
$\nabla$ be the Levi-Civita connection of~$g$. In particular, $\nabla$
satisfies~(\ref{Ediv}) and $-\Gamma_{12}^3=\Gamma_{23}^1=\Gamma_{31}^2=\frac{d^2T}{2}$.
Let $\eps':\ZZ^2\to\ZZ_2$ be a homomorphism. We abbreviate $\eps'(e_\mu)$ to $\eps_\mu$.
Then (\ref{EsumA}) is satisfied if and only if $\eps_1r$ is even. It is easy to see
that the disjoint union $\cR_{\eps'}$ of
\[\cR_{\eps'}^{(1)} = \{\,\xi\in\Sigma_{\eps'}:\xi_1=0\,\}\quad\mbox{and}\quad
\cR_{\eps'}^{(2)} = \{\,\xi\in\Sigma_{\eps'}:\xi_1\neq 0\mbox{ and
}\xi_2=\eps_2\,\}\]
is a set of representatives for the set of all $\RR$-orbits intersecting
$\Sigma_{\eps'}$. The set $\cR_{\eps'}^{(1)}$ consists of all fixed
points in $\Sigma_{\eps'}$. If $\omega$ is the $\RR$-orbit represented
by $(\xi_1,\eps_2)\in\cR_{\eps'}^{(2)}$, then
$\omega\cap\Sigma_{\eps'}$ contains $|\xi_1r|/2$ distinct
$\ZZ$-orbits.

We compute $|\,b\,|=\frac{r^2}{d^2}$, $\la\xi,s_1\ra=\frac{1}{T}\,\xi_1$,
$\la\xi,s_2\ra=-d\xi_2$, $\la B^\top\xi,s_1\ra=0$ and
$\la B^\top\xi,s_2\ra=-dr\,\xi_1$. Inserting this
into~(\ref{eq:mu_k_for_3dim_H}) gives
\begin{equation}\label{1*}
\mu_k^\pm(0,\xi_2)=-\frac{d^2T}{4}\pm\pi\left(\,(2k+\dot\eps(1))^2\frac{d^2}{r^2}
+d^2\xi_2^2\,\right)^{1/2}.
\end{equation}
Similarly, for $\xi\in\Sigma_{\eps'}$ with $\xi_1\neq 0$,
Equations~(\ref{eq:lambda_0_for_3dim_H})
and~(\ref{eq:lambda_k_for_3dim_H}) yield
\begin{equation}\label{2*}
\lambda_0(\xi_1,\eps_2)=-\frac{d^2T}{4}-\frac{\pi}{T}\,|\xi_1|
\end{equation}
and
\begin{equation}\label{3*}
\lambda_k^{\pm}(\xi_1,\eps_2)=-\frac
{d^2T}{4}\pm\left(\,2\pi d^2k|\xi_1|
+\frac{\pi^2}{T^2}\,\xi_1^2\right)^{1/2}\,.
\end{equation}

We will use the following notation for the description of the spectrum of the
Dirac operator $D$ on $\GG$. We define the spectral multiplicity function
$$m(D):\RR\to\NN\cup\{|\NN|\},\quad m(D)(\lambda)=\dim\ker(D-\lambda I).$$
Moreover, $\delta=\delta(\lambda)$ denotes the function that
takes the value $1$ in $\lambda$ and that is zero on $\RR\setminus\{\lambda\}$.

Suppose we are given a spin structure corresponding to a homomorphism
$\eps:\ZZ^2\rtimes_A\ZZ\rightarrow \ZZ_2$ with $\eps_1=0$. Summation of (\ref{1*}),
(\ref{2*}) and (\ref{3*}) over $\xi_\nu\in 2\ZZ+\eps_\nu$ gives
$$m(D)=m_1^+(D)+m_1^-(D)+m_2^+(D)+m_2^0(D)+m_2^-(D),$$
where
\begin{eqnarray*}
m_1^\pm(D) &=& \sum_{k\in\Bbb Z}\sum_{l\in\Bbb
Z}\,\delta\left(\,-\frac{d^2T}{4}\pm
\frac{\pi d}{r}\left(\,(2k+\dot\eps(1))^2+r^2(2l+\eps_2)^2\,\right)^{1/2}\,\right)\,,\\
m_2^0(D) &=& \sum_{l=1}^\infty\,2rl\,\delta\left(\,-\frac{d^2T}{4}-\frac{2\pi
l}{T}
\,\right)\,,\\
m_2^\pm(D) &=& \sum_{l=1}^\infty\,2rl\,\sum_{k=1}^\infty\delta\left(\,-\frac{d^2T}{4}
\pm\left(\,4\pi d^2kl+\frac{4\pi^2l^2}{T^2}\,\right)^{1/2}\,\right)\;.
\end{eqnarray*}
Now assume $\eps_1=1$, which is only possible if $r$ is even. Then
$\cR_{\eps'}^{(1)}=\emptyset$ and we obtain
$$m(D)=m_2^+(D)+m_2^0(D)+m_2^-(D),$$ where now
\begin{eqnarray*}
m_2^0(D) &=& \sum_{l=0}^\infty\,(2l+1)r\,\delta\left(\,-\frac{d^2T}{4}-\frac{\pi(2l+1)}{T}
\,\right)\,,\\
m_2^\pm(D) &=& \sum_{l=0}^\infty\,(2l+1)r\,\sum_{k=1}^\infty\delta\left(\,-\frac{d^2T}{4}
\pm\left(\,2\pi d^2k(2l+1)+\frac{\pi^2(2l+1)^2}{T^2}\,\right)^{1/2}\,\right)\;.
\end{eqnarray*}

Now let us turn to the sub-Riemannian case and suppose
$\cH=\Span\{s_2,s_3\}$, where again $s_2$ and $s_3$ are orthonormal.
Let $\nabla$ be defined by (\ref{EK}) for a leftinvariant complement
$\cV:=\RR\cdot u$, $u\in\fg$ of $\cH$. Since we wish to get a
symmetric sub-Dirac operator, the only possible choice is
$\cV:=\RR\cdot s_1$. Indeed, otherwise
$[\Gamma(\cH),u]\not\subset\Gamma(\cH)]$, thus $D$ is not symmetric by
Lemma~\ref{lm:sym_of_D}. We proceed as above, now using
Equations~(\ref{eq:mu_k_for_2dim_H}) and~(\ref{eq:lambda_for_2dim_H}).

For a spin structure that corresponds to a homomorphism
$\eps:\ZZ^2\rtimes_A\ZZ\rightarrow \ZZ_2$ with $\eps_1=0$ we obtain
$$m(D)=|\NN|\cdot \delta(0) +m_1^+(D)+m_1^-(D)+m_2^+(D)+m_2^-(D),$$ where
\begin{eqnarray*}
m_1^\pm(D) &=& \sum_{k\in\Bbb Z}\sum_{l\in\Bbb Z}\,\delta\left(\,\pm
\frac{\pi d}{r}\left(\,(2k+\dot\eps(1))^2+r^2(2l+\eps_2)^2\,\right)^{1/2}\,\right)\,,\\
m_2^\pm(D) &=& \sum_{l=1}^\infty\,2rl\,\sum_{k=1}^\infty\delta\left(\,
\pm\left(\,4\pi d^2kl\,\right)^{1/2}\,\right)\;.
\end{eqnarray*}
If $\eps_1=1$, then
\begin{eqnarray*}
\lefteqn {m(D)=|\NN|\cdot \delta(0)\ +
}\\&&\sum_{l=0}^\infty\,(2l+1)r\,\sum_{k=1}^\infty\left(\delta\big(\,
(\,2\pi d^2k(2l+1))^{1/2}\,\big)+\delta\big(
-(\,2\pi d^2k(2l+1))^{1/2}\big)\,\right) .
\end{eqnarray*}

\subsection{A five-dimensional two-step nilpotent example}
\label{S4.3}

We start by considering 2-step nilpotent Lie groups which are isomorphic
to a standard model $G=\RR^{2p}\rtimes_A \RR$ as described in
Lemma~\ref{lm:standard_2_step_nilp} and therefore generalise the example
from the preceding subsection. We will describe the orbits of $\RR$
and $\ZZ$ acting on $\RR^{2p}$ by $A^\top$. Then we will specialise to
$\dim G=5$ for the computation of the spectrum of the sub-Dirac
operator on $\GG$.

\begin{lm}\label{lm:standard_2_step_nilp}
Let $G$ be a simply connected Lie group satisfying $[G,G]=Z(G)$ and
admitting a connected abelian normal subgroup $N$ of codimension~$1$.
Let $\Gamma$ be a uniform discrete subgroup of $G$ such that $\Gamma\cap N$
is uniform in~$N$. Then there exist $p\ge 1$, a one-parameter subgroup 
of $\GL(2p,\RR)$ of the form
\[A(t)=\left( \begin{array}{cc} I & tR \\ 0 & I \end{array} \right)\]
with $R=\diag(r_1,\ldots,r_p)$ and positive integers $r_\nu$ such that
$r_{\nu+1}\,|\,r_\nu$ for $\nu=1,\ldots,p-1$, and an isomorphism $\Phi$ of $G$
onto $\RR^{2p}\rtimes_A\RR$ such that $\Phi(\Gamma)=\ZZ^{2p}\times_A\ZZ$.
\end{lm}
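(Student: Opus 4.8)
The plan is to refine Lemma~\ref{lm:model} by producing a particularly good $\ZZ$-basis of the lattice $\Gamma\cap N$, one in which the operator $B=\ad(b)\,|\,\fn$ takes the asserted block shape, with the divisibility of the $r_\nu$ forced by the theory of elementary divisors. Since $[G,G]=Z(G)$ gives in particular $[G,G]\subseteq Z(G)$, the group $G$ is two-step nilpotent, hence simply connected nilpotent and in particular exponential, so Lemma~\ref{lm:model} applies and provides a one-parameter group and an isomorphism of the required kind; only the normal form of $B$ remains to be arranged.

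First I would pass to the Lie algebra. As $\exp$ is a diffeomorphism of the simply connected nilpotent $G$, the hypothesis $[G,G]=Z(G)$ translates into $[\fg,\fg]=\fz(\fg)$, and $N$ abelian makes $\fn$ abelian. Writing $\fg=\fn\oplus\RR b$ with $B=\ad(b)\,|\,\fn$, a direct computation gives $[\fg,\fg]=\im B$ and, since $B\neq 0$, $\fz(\fg)=\Ker B$; moreover two-step nilpotency forces $B^2=0$, so $\im B\subseteq\Ker B$ automatically. The hypothesis thus sharpens to the equality $\im B=\Ker B$, and rank--nullity gives $\dim\fn=2p$ where $p:=\dim\im B\ge 1$, so that $n=2p$.

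Next I would bring in integrality. By Lemma~\ref{lm:model} there is already a one-parameter group $A_0(t)=\exp(tB)$ with $A_0(1)=I+B\in\SL(n,\ZZ)$ relative to some $\ZZ$-basis of $L:=\Gamma\cap N$; hence $B$ is an integer operator on $L$. In particular $\Ker_\RR B$ is defined over the rationals, so $K:=L\cap\Ker_\RR B$ is a sublattice of rank $p$, the quotient $L/K$ is free of rank $p$, and $B$ induces an injective homomorphism $\bar B\colon L/K\to K$ of free abelian groups of rank $p$. The elementary divisor theorem (Smith normal form) now yields a basis $e_1,\ldots,e_p$ of $K$ together with elements $\tilde f_1,\ldots,\tilde f_p\in L$ lifting a basis of $L/K$ such that $B\tilde f_\nu=r_\nu e_\nu$ with positive integers $r_\nu$ satisfying $r_{\nu+1}\mid r_\nu$ (after reversing the usual increasing ordering and permuting the indices simultaneously). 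Since the $e_\nu$ span $K$ and the $\tilde f_\nu$ project to a basis of $L/K$, the family $e_1,\ldots,e_p,\tilde f_1,\ldots,\tilde f_p$ is a $\ZZ$-basis of $L$; in this basis $B e_\nu=0$ and $B\tilde f_\nu=r_\nu e_\nu$, so $B$ has matrix $\left(\begin{smallmatrix}0 & R\\ 0 & 0\end{smallmatrix}\right)$ with $R=\diag(r_1,\ldots,r_p)$, whence $A(t)=\exp(tB)=\left(\begin{smallmatrix}I & tR\\ 0 & I\end{smallmatrix}\right)$ and $A(1)\in\SL(2p,\ZZ)$.

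Finally I would assemble the isomorphism exactly as in Lemma~\ref{lm:model}, keeping the same $b$ and only replacing the lattice basis. Let $M\colon\RR^{2p}\to N$ send the standard basis to $e_1,\ldots,e_p,\tilde f_1,\ldots,\tilde f_p$, so $M(\ZZ^{2p})=L$, and set $\Phi(x,t)=M(x)\exp(tb)$. Conjugation by $\exp(tb)$ acts on $\fn$ by $\Ad(\exp tb)\,|\,\fn=e^{t\,\ad b}=I+tB$, which in the chosen basis is precisely $A(t)$; hence $\Phi$ is an isomorphism of $\RR^{2p}\rtimes_A\RR$ onto $G$ carrying $\ZZ^{2p}\rtimes_A\ZZ$ onto $\Gamma$, and its inverse is the required isomorphism $\Phi$. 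The one genuinely non-formal point is the middle step: recognizing that $\im B=\Ker B$ together with the integrality of $B$ reduces the normal form to an elementary-divisor problem for the injective map $\bar B\colon L/K\to K$, and then verifying that the adapted basis so obtained is an honest $\ZZ$-basis of all of $L$ producing simultaneously the block shape and the divisibility chain $r_{\nu+1}\mid r_\nu$.
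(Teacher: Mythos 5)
Your proposal is correct and follows essentially the same route as the paper: both pass through Lemma~\ref{lm:model} to get a standard model, use $[G,G]=Z(G)$ to force the block-nilpotent shape of $B$ with $\im B=\Ker B$ of dimension $p$, and then invoke the Smith normal form over $\ZZ$ to diagonalize the off-diagonal block with the divisibility chain $r_{\nu+1}\mid r_\nu$. Your coordinate-free formulation via the induced injection $\bar B\colon L/K\to K$ is just a repackaging of the paper's Smith normal form of the integer matrix $R_0$ after first adapting the lattice basis to $\Gamma\cap Z(G)$, so no substantive difference.
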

\begin{proof}
Put $p=\dim Z(G)=\frac{1}{2}\dim N$. Since $\Gamma\cap Z(G)$ is uniform
in $Z(G)$, we find generators $v_1,\ldots,v_{2p}$ of $\Gamma\cap N$ such
that $\Gamma\cap Z(G)=\ZZ v_1+\ldots+\ZZ v_p$. As in the proof of
Lemma~\ref{lm:model}, we consider the linear isomorphism $M:\RR^{2p}\to N$
given by $M(e_\nu)=v_\nu$, choose $b\in\fg$ such that $\exp(b)\in\Gamma$
and $\exp(b)N$ generates $\Gamma N/N$, and define $A_0(t)\in\GL(2p,\RR)$
such that $\Phi_0(x,t)=M(x)\exp(tb)$ becomes an isomorphism of~$G$
onto~$\RR^{2p}\rtimes_{A_0}\RR$. Since
$\Phi_0(Z(G))=\RR^p\times\{0\}\times\{0\}$, we have
\[A_0(t)=\left(\begin{array}{cc} I & tR_0 \\ 0 & I\end{array}\right)\]
with $R_0\in\GL(p,\ZZ)$. Recall that $R_0$ can be brought into Smith normal
form, i.e., there exist $Q_1,Q_2\in\GL(p,\ZZ)$ such that $R:=Q_1R_0Q_2^{-1}=
\diag(r_1,\ldots,r_p)$ is diagonal with positive integers $r_\nu$ such that
$r_{\nu+1}\,|\,r_\nu$. Clearly $\Psi(x',x'',t):=(Q_1x',Q_2x'',t)$ gives
an isomorphism of~$\RR^{2p}\rtimes_{A_0}\RR$ onto~$\RR^{2p}\rtimes_{A}\RR$
with $A(t)(x',x'')=(x'+tRx'',x'')$. Finally, it follows that the assertion
of the lemma holds for $\Phi:=\Psi\Phi_0$. \qed
\end{proof}

Let $G=\RR^{2p}\rtimes_A\RR$ be as in Lemma~\ref{lm:standard_2_step_nilp} with
uniform discrete subgroup $\Gamma=\ZZ^{2p}\rtimes_A\ZZ$. In particular,
$A(t)e_\nu=e_\nu$ and $A(t)e_{p+\nu}=e_{p+\nu}+r_\nu t\,e_\nu$ for all
$1\le\nu\le p$.

Let $\dot{\eps}:\ZZ\to\ZZ_2$ and $\eps':\ZZ^{2p}\to\ZZ_2$ be homomorphisms.
As before, we put $\eps_\nu=\eps'(e_\nu)$ for $\nu=1,\dots,2p$. By
Lemma~\ref{lm:eps} it follows that $\eps(k,l):=\eps'(k)+\dot{\eps}(l)$
is a homomorphism of~$\Gamma$ if and only if $r_\nu\,\eps_\nu\in 2\ZZ$
for $1\le\nu\le p$. Note that the latter condition implies $\eps_\nu=0$
whenever $r_\nu$ is odd.

Next we will describe the coadjoint orbits. First of all,
\begin{equation}\label{eq:coadj_rep}
\la A(t)^\top\xi,e_\nu\ra=\xi_\nu\quad\text{and}\quad
\la A(t)^\top\xi,e_{p+\nu}\ra=\xi_{p+\nu}+r_\nu\xi_\nu\,t
\end{equation}
for $1\le\nu\le p$. To formulate the subsequent result, a little more notation
is needed. If $\xi\in\ZZ^{2p}$, then $\bar{\xi}\in\ZZ^p$ denotes the projection
of $\xi$ onto the first $p$ variables. For $\eta\in\ZZ^p$, the subset
$\{\,\xi\in\ZZ^{2p}:\bar{\xi}=\eta\,\}$ is $\ZZ$-invariant. In particular,
$\{\,\xi:\bar{\xi}=0\,\}$ is the set of all points remaining fixed
under the coadjoint action. Put $J_\eta=\{\,\nu:\eta_\nu\neq 0\,\}$.
For $\eta\neq 0$, let $d_\eta>0$ be the greatest common divisor of the integers
$|r_1\,\eta_1|,\ldots,|r_p\eta_p|$. We choose $j_\eta=\min J_\eta$
and set $q_\eta=|r_{j_\eta}\eta_{j_\eta}|\,/\,d_\eta$.

Let $\bar{\Sigma}_{\eps'}$ be the image of $\Sigma_{\eps'}$ under projection.
If $\eta\in\bar{\Sigma}_{\eps'}\setminus\{0\}$, then $d_\eta$ is even
because $\eta_{\nu}$ is even whenever $r_\nu$ is odd. Furthermore, we define
$\cR_{\eps'\!,0}=\{\,\xi\in\Sigma_{\eps'}:\bar{\xi}=0\,\}$ and
$\cR_{\eps'\!,\eta}=\{\,\xi\in\Sigma_{\eps'}:\bar{\xi}=\eta$ and
$0\le \xi_{p+j_\eta}\le 2q_\eta-1\,\}$ for $\eta\in\bar{\Sigma}_{\eps'}$
non-zero. Note that $\cR_{\eps'\!,0}$ is empty if $\eps_\nu=1$ for
some $1\le\nu\le p$.

\begin{lm}\label{lm:set_of_rep}
In this situation, the following holds true:
\begin{enumerate}[label=(\roman*)]
\item The disjoint union $\cR_{\eps'}:=\bigcup_{\eta\in\bar{\Sigma}_{\eps'}}
\cR_{\eps'\!,\eta}$ is a set of representatives for the set of all $\RR$-orbits
intersecting $\Sigma_{\eps'}$.
\item Let $\omega$ be an $\RR$-orbit which intersects $\Sigma_{\eps'}$. Then
$\eta:=\bar\xi$ does not depend on the choice of $\xi\in\omega\cap\Sigma_{\eps'}$.
If $\omega$ is not a fixed point, then $\omega\cap\Sigma_{\eps'}$ consists
of~$d_{\eta}/2$ distinct $\ZZ$-orbits.
\end{enumerate}
\end{lm}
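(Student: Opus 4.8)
The plan is to understand the coadjoint action on $\ZZ^{2p}$ explicitly via~(\ref{eq:coadj_rep}) and then to separate the two statements. The whole action is governed by the projection $\xi\mapsto\bar\xi$: by~(\ref{eq:coadj_rep}), the first $p$ coordinates $\xi_\nu$ are fixed by $A(t)^\top$, while the last $p$ coordinates move by the affine rule $\xi_{p+\nu}\mapsto\xi_{p+\nu}+r_\nu\xi_\nu t$. This immediately proves the first half of~(ii): the quantity $\eta=\bar\xi$ is an invariant of the orbit, hence does not depend on the choice of $\xi\in\omega\cap\Sigma_{\eps'}$. It also shows that $\omega$ is a fixed point precisely when $\eta=0$, and that for $\eta\neq 0$ the $\RR$-orbit through $\xi$ is a line whose direction vector has last-block components $(r_1\eta_1,\ldots,r_p\eta_p)$.

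For part~(i), I would first treat the fixed-point orbits. When $\eta=0$ every point of $\{\xi:\bar\xi=0\}$ is a singleton $\RR$-orbit, so $\cR_{\eps'\!,0}=\{\xi\in\Sigma_{\eps'}:\bar\xi=0\}$ manifestly lists each such orbit exactly once. For $\eta\neq 0$, fix an $\RR$-orbit $\omega$ with $\bar\xi=\eta$ for points in $\omega$. Along $\omega$ the parametrization is $t\mapsto(\eta,\ \xi''+t(r_1\eta_1,\ldots,r_p\eta_p))$, where $\xi''$ denotes the last block; intersecting with $\ZZ^{2p}$ forces $t$ to take only those values for which every $t\,r_\nu\eta_\nu$ is an integer, i.e.\ $t\in(1/d_\eta)\ZZ$ since $d_\eta=\gcd_\nu|r_\nu\eta_\nu|$. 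The map $t\mapsto\xi_{p+j_\eta}=\xi''_{j_\eta}+t\,r_{j_\eta}\eta_{j_\eta}$ then shows that the $j_\eta$th coordinate of the last block advances in steps of $|r_{j_\eta}\eta_{j_\eta}|/d_\eta=q_\eta$ as $t$ runs through $(1/d_\eta)\ZZ$. Requiring $0\le\xi_{p+j_\eta}\le 2q_\eta-1$ therefore picks out exactly two consecutive integer points of $\omega\cap\ZZ^{2p}$ per $\ZZ$-shift; the reason the window has length $2q_\eta$ rather than $q_\eta$ is that $\Sigma_{\eps'}$ lives in $2\ZZ+\eps'(e_\nu)$, so within $\omega\cap\Sigma_{\eps'}$ the admissible values of $\xi_{p+j_\eta}$ are spaced by $2q_\eta$. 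Hence $\cR_{\eps'\!,\eta}$ meets $\omega\cap\Sigma_{\eps'}$ in exactly one $\ZZ$-orbit, proving that $\cR_{\eps'}$ is a transversal.

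For the count in~(ii), I would fix $\xi\in\omega\cap\Sigma_{\eps'}$ with $\eta\neq 0$ and count integer points of $\omega\cap\Sigma_{\eps'}$ modulo the $\ZZ$-action $\xi\mapsto A(1)^\top\xi$, which shifts $t$ by~$1$. The admissible values of $t$ are $t\in(1/d_\eta)\ZZ$ subject to the parity constraint from $\Sigma_{\eps'}$; since $d_\eta$ is even on $\bar\Sigma_{\eps'}\setminus\{0\}$, one checks that the parity condition on each last-block coordinate $\xi_{p+\nu}+t\,r_\nu\eta_\nu$ is automatically consistent as $t$ runs over $(1/d_\eta)\ZZ$, and the integer multiple-of-$1/d_\eta$ values of $t$ in one $\ZZ$-period $[0,1)$ number exactly $d_\eta$, of which the parity constraint selects $d_\eta/2$. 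This gives the asserted $d_\eta/2$ distinct $\ZZ$-orbits. The main obstacle I expect is the careful bookkeeping of the two lattice conditions simultaneously: the integrality condition $t\in(1/d_\eta)\ZZ$ coming from $\omega\cap\ZZ^{2p}$ and the parity condition $\xi\in\Sigma_{\eps'}$, together with verifying that the evenness of $d_\eta$ makes these conditions compatible and halves the count. The role of choosing $j_\eta=\min J_\eta$ and of the divisibility $r_{\nu+1}\mid r_\nu$ is mainly to guarantee that $q_\eta$ and $d_\eta$ are well-behaved integers and that the representative window is unambiguous; I would verify that the answer is independent of this choice as a consistency check.
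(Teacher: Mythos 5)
Your proposal is correct and follows essentially the same route as the paper: both arguments reduce everything to the observation that the set of times $t$ with $A(t)^\top\xi\in\Sigma_{\eps'}$ is the lattice $\frac{2}{d_\eta}\,\ZZ$, use the window $0\le\xi_{p+j_\eta}\le 2q_\eta-1$ to isolate a unique representative on each non-fixed orbit, and count $\ZZ$-cosets of $\frac{2}{d_\eta}\,\ZZ$ to obtain the multiplicity $d_\eta/2$. The only cosmetic difference is your intermediate passage through $\omega\cap\ZZ^{2p}$ (lattice $\frac{1}{d_\eta}\,\ZZ$) before imposing the parity condition, which the paper handles in one step.
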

\begin{proof}
Let $\xi\in\Sigma_{\eps'}$ such that $\bar{\xi}\neq 0$. By (\ref{eq:coadj_rep})
we know that $A(t)^\top\xi\in\Sigma_{\eps'}$ if and only if
$r_\nu\xi_{\nu}t\in 2\ZZ$ for all $\nu\in J_{\bar{\xi}}$. This proves
\begin{equation}\label{eq*}\{\,t\in\RR:A(t)^\top\xi\in\Sigma_{\eps'}\,\}
\;=\;\bigcap_{\nu\in J_{\bar{\xi}}}\frac{2}{|r_\nu\xi_{\nu}|}\,\ZZ\;
=\;\frac{2}{d_{\bar{\xi}}}\,\ZZ
\end{equation}
To prove \textit{(i)}, let $\omega$ be an $\RR$-orbit and
$\xi\in\omega\cap\Sigma_{\eps'}$. Clearly $\eta:=\bar{\xi}$ does not depend on
the choice of $\xi$. We can assume $\bar{\xi}\neq 0$. Define $d_\eta$ and
$j=j_\eta$ as above. Since $\la A(t)^\top\xi,e_{p+j}\ra=e_{p+j}+r_j\xi_{j}\,t$,
it follows from (\ref{eq*}) that there exists
$t\in\frac{2}{d_\eta}\ZZ$ such that
$A(t)^\top\xi\in\Sigma_{\eps'}$ and $0\le\la A(t)^\top\xi,e_{p+j}\ra\le
2q_\eta-1$.
This proves $A(t)^\top\xi\in\omega\cap\cR_{\eps'\!,\eta}$ because
$\overline{A(t)^\top\xi}=\bar{\xi}$.
We claim that $\omega\cap\cR_{\eps'\!,\eta}$ consists of a single point: If
$\xi,\xi^\ast\in\omega\cap\cR_{\eps'\!,\eta}$, then, again by~(\ref{eq*}),
there exists a $t\in\frac{2}{d_\eta}\,\ZZ$ such that $\xi^\ast=A(t)^\top\xi$.
In particular $\xi^\ast_{p+j}=\xi_{p+j}+r_j\xi_{j}t$. Since
$0\le\xi_{p+j},\xi^\ast_{p+j}\le 2q_\eta-1$, it follows $t=0$ and hence
$\xi^\ast=\xi$. This proves $\cR_{\eps'}$ to be a set of representatives.

Let $\xi\in\omega\cap\Sigma_{\eps'}$ be an arbitary non-fixed point. Then
$f:\RR\to\omega$, $f(t)=A(t)^\top\xi$, is bijective and $\RR$-equivariant.
By~(\ref{eq*}) it holds $f^{-1}(\omega\cap\Sigma_{\eps'})=\frac{2}{d_{\eta}}\,\ZZ$.
Since $d_{\eta}/2$ is an integer, it follows
\[\#\;\ZZ\setminus\omega\cap\Sigma_{\eps'}=\#\;\ZZ\setminus\frac{2}{d_{\eta}}
\ZZ=\frac{d_{\eta}}{2}\;.\]
More precisely, the points $\{A(\frac{2k}{d_{\eta}})^\top\xi:
0\le k<\frac{d_{\eta}}{2}\}$ are representatives for the set of all
$\ZZ$-orbits in $\omega\cap\Sigma_{\eps'}$.
\qed
\end{proof}

We point out that the choice of the set $\cR_{\eps'}$ is in no way canonical.
For example, any choice of indices $j_\eta\in J_\eta$ leads to a set of
representatives.\\\\
Now let us restrict ourselves to $p=2$. Then canonical basis $e_1,\ldots,e_4,b$
of the Lie algebra $\fg\cong\RR^{2p}\rtimes_B\RR$ of $G$ satisfies the relations
$[b,e_3]=r_1e_1$ and $[b,e_4]=r_2e_2$.

Put $s_1=e_3$, $s_2=e_4$ and $s_3=b$. As before, the corresponding left-invariant
vector fields are denoted by the same symbol. The left-invariant
distribution $\cH:=\Span\{s_1,s_2,s_3\}$ is given the orientation
and Riemannian metric $g$ such that $s_1,s_2,s_3$ is a positively oriented,
orthonormal frame. In particular, $|\,b\,|=1$. Note that $\cH$ is
bracket-generating.
\begin{re}
In general, when $\cH$ is a left-invariant 3-dimensional distribution on a
Lie group $G$, the affine space of all left-invariant metric connections in $\cH$
satisfying~(\ref{Ediv}) has dimension~6. However, in the present example, the
left-invariant connections which are defined by a left-invariant projection~$\proj$
onto~$\cH$ and the Koszul formula~(\ref{EK}) and which satisfy~(\ref{Ediv}) form
a $3$-dimensional space.
\end{re}
Let $\nabla$ be a left-invariant metric connection in $\cH$ satisfying~(\ref{Ediv}).
For example, we could take the connection given by projection onto $\cH$
along~$\cV:=\Span\{e_1,e_2\}$, which, according to~(\ref{EK}), satisfies
$\Gamma_{ij}^k=0$ for all $i,j,k$ because $[\fg,\fg]\subset\cV$.
Let $\eps:\Gamma\to\ZZ_2$ be a homomorphism giving a spin structure of $\dot{\cH}$.
By Lemma~\ref{lm:sym_of_D} the sub-Dirac operator $D$ defined
by $(\cH,g,\nabla,\eps)$ is symmetric. We compute its spectrum. To this end,
we note that the coadjoint representation is given by
\[B^\top\xi=\left(\begin{array}{c} 0 \\ 0 \\ r_1\xi_1 \\ r_2\xi_2
\end{array}\right)\qquad\mbox{and}\qquad  A(t)^\top\xi=
\left(\begin{array}{c} \xi_1 \\ \xi_2 \\ \xi_3+r_1\xi_1t \\ \xi_4+r_2\xi_2t
\end{array}\right)\;.\]
In particular, we get $\la\xi,s_\nu\ra=\la\xi,e_{2+\nu}\ra=\xi_{2+\nu}$
and $\la B^\top\xi,s_\nu\ra=r_\nu\xi_{\nu}$ for $\nu=1,2$. Put
$\alpha=-\frac{1}{2}\,(\,\Gamma_{12}^3+\Gamma_{23}^1+\Gamma_{31}^2\,)$.
By (\ref{eq:mu_k_for_3dim_H}), (\ref{eq:lambda_0_for_3dim_H}) and
(\ref{eq:lambda_k_for_3dim_H}), the eigenvalues of $D_\xi$ are
of the form
\[\mu_k^\pm(\xi)=\alpha\pm\pi\left(\,(2k+\dot{\eps}(1))^2+\xi_3^2+\xi_4^2\,
\right)^{1/2}\]
for fixed points, and
\[\lambda_0(\xi)=\alpha-\pi\,\frac{r_1\xi_1\xi_4-r_2\xi_2\xi_3}
{(\,r_1^2\xi_1^2+r_2^2\xi_2^2\,)^{1/2}}\]
or
\[\lambda_k^\pm(\xi)=\alpha\pm\left(\,2\pi k(r_1^2\xi_1^2+r_2^2\xi_2^2)^{1/2}+\pi^2\,
\frac{(\,r_1\xi_1\xi_4-r_2\xi_2\xi_3\,)^2}{r_1^2\xi_1^2+r_2^2\xi_2^2}\,\right)^{1/2}\]
else. We want to decompose the set $\cR_{\eps'}$ of representatives
into a disjoint union of sets that we can describe explicitly. To this
end, consider $\eta=\bar\xi \in \bar{\Sigma}_{\eps'}$ and assume
$\eta\not=0$. If $\eta_1\not=0$ and $\eta_2=0$, then
 $j_\eta=1$, $d_\eta=|r_1\eta_1|$ and $q_\eta=1$.
Similarly, if $\eta_1=0$ and $\eta_2\not=0$, then $j_\eta=2$,
$d_\eta=|r_2\eta_2|$ and $q_\eta=1$. For $\eta_1\eta_2\not=0$ we get
 $j_\eta=1$, and obtain $d_\eta=\gcd(|\,r_1\eta_1|,|r_2\eta_2|\,)$
and $q_\eta=|r_1\eta_1|\,/\,d_\eta$. This leads to a decomposition
of~$\cR_{\eps'}$ into the following subsets:
\begin{eqnarray*}
\cR_{\eps'}^{(1)} &=& \cR_{\eps',0}\,,\\
\cR_{\eps'}^{(2)} &=& \{\,\xi\in\Sigma_{\eps'}:\xi_1\neq 0,\,\xi_2=0,\,
\xi_3=\eps_3\,\}\,,\\
\cR_{\eps'}^{(3)} &=& \{\,\xi\in\Sigma_{\eps'}:\xi_1=0,\, \xi_2\neq 0,\,
\xi_4=\eps_4\,\}\,,\\
\cR_{\eps'}^{(4)} &=& \{\,\xi\in\Sigma_{\eps'}:\xi_1\neq 0,\,\,\xi_2\neq 0,\,
0\le \xi_3\le 2q_{(\xi_1,\xi_2)}-1\,\}\,.
\end{eqnarray*}
We have $\cR_{\eps'}^{(1)}=\emptyset$ if $\eps_1=1$ or $\eps_2=1$,
$\cR_{\eps'}^{(2)}=\emptyset$ if $\eps_2=1$, and $\cR_{\eps'}^{(3)}=\emptyset$
if $\eps_1=1$. Recall that for $\nu=1,2$ the case $\eps_\nu=1$ can occur only
if $r_\nu$ is even.

The spectrum of $D$ depends on the spin structure given by $\eps$. It holds
$m(D)=\sum_{i=1}^4m_i$ where $m_i=\sum_{\xi\in\cR_{\eps'}^{(i)}}m(D_\xi)$.
Note that $m_i=0$ if $\cR_{\eps'}^{(i)}=\emptyset$. Otherwise, $m_i$ is given
as follows, where the sums are meant to be taken over $\xi_\nu\in 2\ZZ+\eps_\nu$
and $\xi_5\in 2\ZZ+\dot{\eps}(1)$.
\begin{eqnarray*}
m_1 &=& \sum_{\xi_3,\xi_4,\xi_5}\,\delta(\,\alpha+\pi(\,\xi_3^2+\xi_4^2
+\xi_5^2\,)^{1/2}\,)+\delta(\,\alpha-\pi(\,\xi_3^2+\xi_4^2+\xi_5^2\,)^{1/2}\,)\\
m_2 &=& \sum_{\xi_1\neq 0}\frac{|r_1\xi_1|}{2}\sum_{\xi_4}
\bigg(\,\delta(\,\alpha-\pi\sgn(r_1\xi_1)\xi_4\,)\\
& & \hspace{0.5cm}+\sum_{k=1}^\infty\,\left(\,\delta(\,\alpha+(2\pi k|r_1\xi_1|
+\pi^2\xi_4^2)^{1/2}\,)+\delta(\,\alpha-(2\pi k|r_1\xi_1|+\pi^2\xi_4^2)^{1/2}\,)
\,\right)\,\bigg)\\
m_3 &=&\sum_{\xi_2\neq 0}\frac{|r_2\xi_2|}{2}\sum_{\xi_3}\bigg(\,\delta(\,
\alpha+\pi\sgn(r_2\xi_2)\xi_3\,)\\
& & \hspace{0.5cm}+\sum_{k=1}^\infty\,\left(\,\delta(\,\alpha+(2\pi k|r_2\xi_2|
+\pi^2\xi_3^2)^{1/2}\,)+\delta(\,\alpha-(2\pi k|r_2\xi_2|
+\pi^2\xi_3^2)^{1/2}\,)\,\right)\,\bigg)\\
m_4 &=& \sum_{\xi_1\neq 0}\sum_{\xi_2\neq 0}
\frac{\gcd(\,|r_1\xi_1|,|r_2\xi_2|\,)}{2}\\
& & \hspace{1cm}\sum_{0\le\xi_3\le 2q_{(\xi_1,\xi_2)}-1}\sum_{\xi_4}
\left(\,\delta(\,\lambda_0(\xi)\,)+\sum_{k=1}^\infty\,
\left(\,\delta(\,\lambda^{+}_k(\xi)\,)+\delta(\,\lambda^{-}_k(\xi)\,)\,
\right)\,\right)
\end{eqnarray*}
In particular, if $\eps_\nu=0$ for $\nu=1$ or $2$, then the numbers
$\{\alpha+(2k+\eps_{2+\nu})\pi :k\in\ZZ\}$ are eigenvalues of $D$ and each
of them has infinite multiplicity.

In this example, the spectrum of $D$ is a non-discrete subset of $\RR$,
no matter which homomorphism $\eps:\Gamma\to\ZZ_2$ defining the underlying
spin structure is chosen. Indeed, $\alpha^\ast:=\alpha+\pi\sgn(r_2)\eps_3$ is
an accumulation point of~$\sigma(D)$. To see this, we consider the sequence
$\xi_n\in\cR_{\eps'}^{(4)}$ given by $\xi_{n1}=2+\eps_1$, $\xi_{n2}=2n+\eps_2$,
$\xi_{n3}=\eps_3$ and $\xi_{n4}=\sgn(r_1r_2)(2+\eps_4)$. Then
$\lambda_0(\xi_n)\neq\alpha^\ast$ and $\lambda_0(\xi_n)\to\alpha^\ast$
for $n\to+\infty$.

\subsection{A three-step nilpotent example}

Let $r_1,r_2\in \ZZ\setminus \{0\}$ be such that $r_1r_2$ is even. Define a Lie
algebra structure on $\fg:=\Span\{e_1,e_2,e_3,b\}$ such that $\fn:=\Span\{e_1,e_2,e_3\}$
is an abelian ideal and $[b,X]=B(X)$ for $X\in\fn$, where $B:\fn\rightarrow\fn$ is
given by  
$$ B=\left( \begin{array}{ccc} 0&r_1&0\\ 0&0&r_2\\ 0&0&0 \end{array}\right)$$ 
with respect to the basis $e_1,e_2,e_3$ of $\fn$. Let $G$ be the simply-connected
Lie group with Lie algebra $\fg$. Then $G=\RR^3\rtimes_A\RR$ with 
$$A(t)=\exp tB =\left( \begin{array}{ccc} 1&tr_1&t^2r_1r_2/2\\ 0&1&tr_2\\ 0&0&1
\end{array}\right).$$ 
Since $A(1)$ is in $SL(2,\ZZ)$, the subset $\Gamma:=\ZZ^3\rtimes_A\ZZ$ a uniform
discrete subgroup of~$G$. Let $(\cH,g)$ be the oriented sub-Riemannian structure
having $s_1:=e_3$, $s_2:=b$ as a positively oriented orthonormal frame. Then $\cH$
is bracket generating.

The spin structures of $\dot{\cH}$ correspond to homomorphisms $\eps:\Gamma
\rightarrow\ZZ_2$. As above we write $\eps(k,l)=\eps'(k)\cdot\dot{\eps}(l)$,
where $\dot{\eps}:\ZZ\rightarrow \ZZ_2$ is an arbitrary homomorphism and
$\eps':\ZZ^3\rightarrow \ZZ_2$ is a homomorphism satisfying~(\ref{EsumA}), which,
in this example, means that $r_1\eps_1$ and $r_1r_2\eps_1/2+r_2\eps_2$ are both even.
More precisely, this shows: If $r_1$ and $r_2$ are both even, then $\eps_1$ and
$\eps_2$ are arbitrary. If $r_1$ is odd and $r_2$ is even, then $\eps_1=0$
and $\eps_2$ is arbitrary. Now suppose that $r_2$ is odd. If, in addition, $r_1$
is odd, then $\eps_1=\eps_2=0$. If $r_1$ is even but not divisble by $4$, then
either $\eps_1=\eps_2=0$ or $\eps_1=\eps_2=1$. Finally, if $r_1$ is divisible
by $4$, then $\eps_2=0$.

Clearly ${\cal V}:=\Span\{e_1,e_2\}$ is a complement of $\cH$ in the tangent bundle $TG$.
Using the projection onto $\cH$ along ${\cal V}$, we define a left-invariant connection
$\nabla$ in $\cH$ by the Koszul formula (\ref{Ediv}). Since $\proj [s_1,s_2]=0$, all
Christoffel symbols $\Gamma_{ij}^k$ vanish. In particular, the sub-Dirac operator is
symmetric and equals
$$D=s_1\cdot \partial_{s_1}+s_2\cdot \partial_{s_2}\,,$$
where we use the simple $\Cl(\cH_e)$-module structure on $\CC^2$ defined by
$$s_1\mapsto\left(\begin{array}{cc} 0&-1\\1&0 \end{array}\right),\quad s_2\mapsto
\left(\begin{array}{cc} i&0\\0&-i \end{array}\right).$$
On the other hand, we have
\begin{equation}\label{EAt} A^\top(t)\xi = \left(\begin{array}{c} \xi_1\\ \xi_2+tr_1\xi_1\\
\xi_3+tr_2\xi_2+t^2 r_1r_2\xi_1/2\end{array}\right).
\end{equation}
In particular, the sets
\begin{eqnarray*}
R^{(1)}&:=& \{\xi\in\RR^3\mid \xi_1=\xi_2=0\},\\ 
R^{(2)}&:=& \{\xi\in\RR^3\mid \xi_1=0,\ \xi_2\not=0\},\\ 
R^{(3)}&:=& \{\xi\in\RR^3\mid \xi_1\not=0\}
\end{eqnarray*}
are invariant under $A^\top(t)$ for all $t\in\RR$.

Let us first consider $D_\theta$ for the orbit $\theta=\{\xi\}$ of an
element $\xi\in R^{(1)}$. Then, according to (\ref{eq:mu_k_for_2dim_H}), the spectrum
of~$D_\theta$ consists of the eigenvalues
$$\mu_k^\pm(\xi)=\pm \pi \left( (2k+\dot{\eps}(1))^2+\xi_3^2\right)^{1/2},\ \  k\in\ZZ.$$

Now consider $\xi\in R^{(2)}$. Then $D_\xi$ has the form 
\begin{equation}\label{Elast}
  \left( \begin{array}{cc} i\partial_t & \bar\omega\\
                   \omega & -i\partial_t
                  \end{array}\right)
\end{equation}
with $\omega(t)=a\omega_1t+\omega_0$, where
$$ a=\pi |r_2\xi_2|,\quad \omega_1=\sgn(r_2\xi_2)\cdot i,\quad \omega_0= \pi i \xi_3.$$
According to (\ref{eq:lambda_for_2dim_H}) the spectrum of $D_\xi$ consists of the
eigenvalues $\lambda_0=0$ and $$\lambda_k^\pm = \pm (2\pi |r_2\xi_2|k)^{1/2},\quad
k\in\NN\setminus\{0\}.$$

Finally, take $\xi\in R^{(3)}$. Then 
$ D_\xi$ is of the form (\ref{Elast})
where $\omega(t)=i\pi (\xi_1r_1r_2t^2/2 +\xi_2r_2t +\xi_3)$. Hence
$$D_\xi^2=\left( \begin{array}{cc} -\partial_t^2-\omega(t)^2 & -i\omega'(t)\\
                   -i\omega'(t) & -\partial_t^2 -\omega(t)^2
                  \end{array}\right).$$
Obviously, $D_\xi^2$ is time-independent diagonalisable. More exactly, $D_\xi^2$ is
conjugate to
$$\left( \begin{array}{cc} -\partial_t^2-\omega(t)^2 -i \omega'(t)&0\\
                  0 & -\partial_t^2 -\omega(t)^2+i\omega'(t)
                  \end{array}\right).$$
The operators $ -\partial_t^2-\omega(t)^2 \mp i\omega'(t)$ are of the form 
$$ P_{a,b,c}^\pm:= \-\partial_t^2+(at^2+bt+c)^2\pm (2at+b)$$
for
$$a=\pi\xi_1r_1r_2\not=0,\quad b=\pi\xi_2r_2,\quad c=\pi\xi_3.$$
We consider the bijection
$$L^2(\RR)\longrightarrow L^2(\RR),\quad \ph\longmapsto \tilde \ph, \ \tilde\ph(t)
=\frac1{x^2}\ph(xt+y),$$
where $x=a^{1/3}$, $y=ba^{-2/3}/2$.

We define $P_c^\pm:=P_{1,0,c}^\pm$. 

{\bf Claim.} The equation $P_{a,b,c}^\pm\tilde\ph=\tilde\lambda\tilde\ph$ is equivalent
to $P_{c_1}^\pm\ph=\lambda\ph$, where
$$c_1=-b^2a^{-4/3}/2 +ca^{-1/3},\quad \tilde \lambda = a^{2/3}\lambda.$$
Indeed, assume that $P_{c_1}^\pm\ph=\lambda\ph$. Then $\ph''(t)=\left( (t^2+c_1)^2\pm2t
-\lambda\right)\ph(t)$ holds. Hence
\begin{eqnarray*}
 \lefteqn{(P_{a,b,c}^\pm\tilde \ph)(t)\ =\ -(\partial_t^2\tilde\ph)(t) 
+ \left( (at^2+bt+c)^2\pm(2at+b)\right)\tilde\ph(t)}\\
&&=\ \left( -x^2\left(((xt+y)^2+c_1)^2\pm2(xt+y)-\lambda\right) 
+ (at^2+bt+c)^2\pm(2at+b)\right)\tilde\ph(t)\\
&& =\ x^2\lambda\tilde\ph(t)\ =\ a^{2/3}\lambda\tilde\ph.
\end{eqnarray*}
The converse can be proven similarly using $\ph(t)=x^2\tilde\ph(t/x-y/x)$.

It is well known that the Schr\"odinger operator $P_c^\pm$ having a polynomial
potential of degree 4 has the following properties \cite{EGS,T}. The spectrum of $P_c^\pm$
is discrete. All eigenvalues are real and simple. They can be arranged into an increasing
sequence $\lambda_0<\lambda_1<\dots \to \infty$ and satisfy
$$\lambda_k\sim \left(\frac{\sqrt\pi\Gamma(7/4)\cdot k}{\Gamma(5/4)}\right)^{4/3}.$$
Obviously, $P_c^+$ and $P_c^-$ have the same eigenvalues. We will denote these eigenvalues
by $\lambda_k(c)$, $k\in\NN$. 

Since $\dim \cH$ is even the spectrum of $D_\xi$ is symmetric. We conclude that
${\rm spec}(D_\xi)$ consists of the eigenvalues
$$\pm \left(a^{2/3}\lambda_k (-4b^2 a^{-4/3}+ca^{-1/3})\right)^{1/2},\quad k\in\NN,$$
where $a=\pi\xi_1r_1r_2/2$, $b=\pi\xi_2r_2$, $c=\pi \xi_3$.

Next we determine a set of representatives of the $\RR$-orbits in $\RR^3$ that
intersect $\Sigma_{\eps'}$ and the number of $\ZZ$-orbits that are contained in them.
Obviously,
$${\cal R}^{(1)}:= R^{(1)}\cap \Sigma_{\eps'}$$
is the set of fixed points in $\Sigma_{\eps'}$ and
$${\cal R}^{(2)}:= \{\xi\in\Sigma_{\eps'}\mid \xi_1=0,\ \xi_2\not=0,\ \xi_3=\eps_3\}$$
is a set of representatives of the $\RR$-orbits in $R^{(2)}$ that interset $\Sigma_{\eps'}$.
For $\xi\in{\cal R}^{(2)}$ the $\RR$-orbit through $\xi$ contains $|r_2\xi_2|/2\,$
$\ZZ$-orbits. Now we turn to orbits contained in $R^{(3)}$. For a given number
$k\in\ZZ\setminus\{0\}$ let $p,q\in \ZZ$, $q>0$ be such that 
$$\frac{|r_2|}{|r_1k|}=\frac pq, \quad (p,q)=1$$
and put $q(k):=q$. Moreover, for $l,q\in\NN$, $q>0$ we define
$$M(l,q):=\{(m_1,m_2)\mid m_1,m_2\in\NN\setminus\{0\},\ m_1+m_2=l,\ q|m_1m_2\}.$$
We will show:
 \begin{enumerate}
\item The set
$$ {\cal R}^{(3)}:=\{\xi\in\Sigma_{\eps'}\mid 0\le\xi_2<|r_1\xi_1|,\ M(\xi_2, q(\xi_1))
=\emptyset\}$$
is a set of representatives of $\RR$-orbits in $R^{(3)}$ that intersect $\Sigma_{\eps'}$.
\item For $\xi \in{\cal R}^{(3)}$ the number of $\ZZ$-orbits contained in the $\RR$-orbit
of $\xi$ equals
$$m(\xi_1,\xi_2):=\#\{k\in\NN\mid \xi_2+2k<|r_1\xi_1|,\ q(\xi_1)|k(k+\xi_2)\}.$$
\end{enumerate}
Take $\xi\in R^{(3)}\cap \Sigma_{\eps'}$ and denote by $\theta$ the $\RR$-orbit of $\xi$.
Using (\ref{EAt}) we see that $A^\top(t)\xi$ is in $\Sigma_{\eps'}$ if and only if
$tr_1\xi_1$ and $t^2r_1r_2\xi_1/2+tr_2\xi_2$ are in $2\ZZ$. The latter condition is
equivalent to 
\begin{equation}\label{Etq}
t=\frac{2k}{r_1\xi_1},\quad q(\xi_1)|k(k+\xi_2)
\end{equation}
for some $k\in\ZZ$. Obviously, we may choose $\hat\xi=(\xi_1,\hat\xi_2,\hat\xi_3)\in\theta$
such that $0\le\hat \xi_2<|r_1\xi_1|$. Now we want to choose $\hat \xi$ is such a way that
$\hat\xi_2\ge 0$ is minimal, which ensures the  uniqueness of the representative. By
(\ref{Etq}), $\hat\xi_2$ is minimal if and only if there does not exist an integer $k$,
$-[\hat\xi_2/2]\le k\le-1$, such that $q(\xi_1)|k(k+\hat\xi_2)$. The latter condition is
equivalent to $q(\xi_1)|(-k)(k+\hat\xi_2)$. Hence $\hat\xi_2$ is minimal if and
only if $\hat\xi_2$ does not decompose as a sum $\hat\xi_2=m_1+m_2$ with
$m_1, m_2\in\NN\setminus\{0\}$ and $q(\xi_1)|m_1m_2$. This proves the first assertion.
The second one follows from (\ref{Etq}).

Now we can give an expression for $m(D)$. In the following sums are taken over
$\xi_i\in\eps_1+2\ZZ$, $i=1,2,3$. Moreover, we willtake another index of summation,
namely $\xi_4\in\dot{\eps}(1)+2\ZZ$. Furthermore, $\kappa\in\{1,-1\}$. Then
\begin{eqnarray*}
\lefteqn{ m(D)= \sum_{\xi_3,\xi_4}\sum_\kappa\delta
\left(\kappa\pi(\xi_3^2+\xi_4^2)^{1/2}\right)}\\
&& +\sum_{\xi_2>0} |r_2\xi_2|\left(\delta(0)+\sum_{k=1}^\infty\sum_\kappa
\delta(\kappa(2\pi k |r_2\xi_2|)^{1/2})\right)\\
&&+\sum_{\xi_1\not=0,\xi_2,\xi_3} \hspace{-9pt}  m(\xi_1,\xi_2)\cdot\sum_{k=0}^\infty
\sum_\kappa\delta\left( \kappa(\pi\xi_1\frac{r_1r_2}2)^{1/3}\lambda_k
\Big((\pi\xi_1\frac{r_1r_2}2)^{-1/3}\pi( \xi_3-\frac{8\xi_2^2r_2}{\xi_1r_1})\Big)^{1/2}\right).
\end{eqnarray*}

\vspace{1.5cm}

Institut f\"ur Mathematik und Informatik\\
Universit\"at Greifswald\\
17489 Greifswald\\
GERMANY\\
email: oungerma@uni-greifswald.de

\end{document}